\newtheorem{theorem}{Theorem}[section]
\newtheorem{lemma}[theorem]{Lemma}
\newtheorem{corollary}[theorem]{Corollary}
\newtheorem{claim}[theorem]{Claim}
\newtheorem{fact}[theorem]{Fact}
\newtheorem{question}[theorem]{Question}
\theoremstyle{definition}
\newtheorem{definition}[theorem]{Definition}
\newtheorem{remark}[theorem]{Remark}
\newcommand{\cf}{\mathrm{cf}}
\newcommand{\bb}{\mathbb}
\newcommand{\otp}{\mathrm{otp}}
\newcommand{\refl}{\mathrm{Refl}}
\newcommand{\DSR}{\mathsf{DSR}}
\newcommand{\uDSR}{\mathsf{uDSR}}
\newcommand{\sDSR}{\mathsf{sDSR}}
\newcommand{\OSR}{\ensuremath{\mathsf{OSR}}\xspace}
\newcommand{\calC}{\mathcal{C}}
\newcommand{\st}{\;|\;}
\newcommand{\crit}{ {\rm crit} }
\newcommand{\forces}{\Vdash}
\newcommand{\lift}{\text{\sf{lift}}}
\newcommand{\Todorcevic}{Todor\v{c}evi\'{c}\xspace}
\title{Separating diagonal stationary reflection principles}
\author{Gunter Fuchs}
\thanks{The first author gratefully acknowledges support from the Simons foundation under award ID 580600. His research was also supported in part by PSC CUNY grant number 61567-00 49.}
\address{The College of Staten Island (CUNY)\\2800 Victory Blvd.~\\Staten Island, NY 10314}
\address{The Graduate Center (CUNY)\\365 5th Avenue, New York, NY 10016}
\email{gunter.fuchs@csi.cuny.edu}
\urladdr{www.math.csi.cuny.edu/~fuchs}
\author{Chris Lambie-Hanson}
\address{Department of Mathematics and Applied Mathematics \\
Virginia Commonwealth University \\
Richmond, VA 23284 \\ United States}
\email{cblambiehanso@vcu.edu}
\urladdr{people.vcu.edu/~cblambiehanso}
\subjclass[2010]{03E05, 03E35, 03E55, 03E57}
\keywords{Reflection principles, square sequences}
\begin{document}
\date{\today}

\begin{abstract}
We introduce three families of diagonal reflection principles for matrices of
stationary sets of ordinals. We analyze both their relationships among
themselves and their relationships with other known principles of
simultaneous stationary reflection, the strong
reflection principle, and the existence of square sequences.
\end{abstract}
\maketitle

\section{Introduction}

The study of compactness and reflection principles has been the subject of a
significant amount of set theoretic research, and the careful investigation of
the tension existing between compactness principles that arise due to
the presence of large cardinals and incompactness principles that tend to hold,
for example, in canonical inner models, has been quite fruitful. Particularly prominent
among the compactness principles that have been studied are various principles
of stationary reflection.
In this article, we investigate the relationships between different principles of
stationary reflection, particularly focusing on diagonal stationary reflection principles.
We will also prove some results closely linking these diagonal reflection principles
with certain square principles, which provide concrete instances of incompactness.
We begin by giving some background, to motivate the questions we are going to address. The notation in the upcoming definition follows \cite{SquaresScalesStationaryReflection} and
\cite{Hayut-LambieHanson:SimultaneousReflectionAndSquare}.

\begin{definition}
\label{def:StationaryReflection}
  Suppose that $\lambda$ is a regular uncountable cardinal and $S \subseteq \lambda$
  is stationary.
  \begin{enumerate}
    \item If $\alpha < \lambda$, then $S$ \emph{reflects at $\alpha$} if
    $\cf(\alpha) > \omega$ and $S \cap \alpha$ is stationary in $\alpha$. We say that
    $S$ \emph{reflects} if there is $\alpha < \lambda$ such that $S$ reflects
    at $\alpha$.
    \item If $\mathcal{T}$ is a family of stationary subsets of $\lambda$ and
    $\alpha < \lambda$, then $\mathcal{T}$ \emph{reflects simultaneously at
    $\alpha$} if $T$ reflects at $\alpha$ for every $T \in \mathcal{T}$. We
    say that $\mathcal{T}$ \emph{reflects simultaneously} if there is $\alpha
    < \lambda$ such that $\mathcal{T}$ reflects simultaneously at $\alpha$.
    \item If $1 < \kappa \leq \lambda$, then $\refl({<} \kappa, S)$ is the assertion
    that every family of fewer than $\kappa$-many stationary subsets of $S$
    reflects simultaneously. $\refl({<} \kappa^+, S)$ is typically denoted by
    $\refl(\kappa, S)$.
    \item $\refl^*({<}\kappa, S)$ is the assertion that, if $\bb{P}$ is a
    $\lambda$-directed closed forcing and $|\bb{P}| \leq \lambda$, then
    $\Vdash_{\bb{P}}``\refl({<}\kappa, S)"$.
  \end{enumerate}
\end{definition}

A major appeal of these principles is that they imply the failure of certain square principles.

\begin{definition}
\label{defn:WeakTodorcevicSquares}
Let $\lambda$ be a limit of limit ordinals.
\begin{enumerate}
  \item A sequence $\vec{\calC}=\langle\calC_\alpha\st\alpha<\lambda,\ \alpha\ \text{limit}\rangle$ is \emph{a coherent sequence of length $\lambda$} if,
  for every limit ordinal $\alpha < \lambda$,
  \begin{enumerate}
    \item $\calC_\alpha$ is a nonempty collection of clubs in $\alpha$; and
    \item for every $C \in \calC_\alpha$ and every limit point $\beta$ of $C$,
    we have $C \cap \beta \in \calC_\beta$.
  \end{enumerate}
  If, moreover, $\kappa$ is a cardinal and $|\calC_\alpha| < \kappa$ for every
  limit $\alpha < \lambda$, then $\vec{\calC}$ is said to have \emph{width} ${<}\kappa$.
  \item If $\vec{\calC}$ is a coherent sequence of length $\lambda$, then a
  \emph{thread} through $\vec{\calC}$ is a club subset $T$ of $\lambda$ that
  coheres with $\vec{\calC}$, i.e., for every limit point $\beta$ of $T$, we
  have $T \cap \beta \in \calC_\beta$.
  \item If $\kappa$ is a cardinal, then a \emph{$\square(\lambda,
  {<}\kappa)$-sequence} is a coherent sequence of length $\lambda$ and width
  ${<}\kappa$ that does not have a thread. The principle $\square(\lambda,{<}\kappa)$
  asserts that there is a $\square(\lambda,{<}\kappa)$ sequence. In place of
  $\square(\lambda,{<}\kappa^+)$, we may write $\square(\lambda,\kappa)$.
\end{enumerate}
\end{definition}

$\square(\lambda,1)$ is known as $\square(\lambda)$, and $\square(\lambda,{<}\kappa)$
becomes weaker as $\kappa$ increases.
In \cite{Hayut-LambieHanson:SimultaneousReflectionAndSquare}, Hayut and the second
author analyzed the effects of simultaneous stationary reflection on these
kinds of square principles. In what follows and throughout the paper,
if $\kappa<\beta$ where $\kappa$ is an infinite cardinal and
$\beta$ is an ordinal, then we will write
$S^\beta_\kappa$ for the set of limit ordinals less than $\beta$ of cofinality
$\kappa$. Similarly, $S^\beta_{{<}\kappa}$ denotes the set of
limit ordinals less than $\beta$ of cofinality less than $\kappa$,
and $S^\beta_{{\leq}\kappa}, S^\beta_{{\ge}\kappa}, S^\beta_{>\kappa}$
have the obvious meaning.

\begin{theorem}[{\cite[Thm.~2.13]{Hayut-LambieHanson:SimultaneousReflectionAndSquare}}] \label{hlh_thm_213}
Suppose that $\kappa<\lambda$ are cardinals, $\lambda$ is regular, and $\refl({<}\kappa,S)$ holds for some stationary $S\subseteq S^\lambda_{{\ge}\kappa}$. Then $\square(\lambda,{<}\kappa)$ fails.
\end{theorem}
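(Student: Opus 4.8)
The plan is to argue by contrapositive: assuming $\vec{\calC}=\langle\calC_\alpha\st\alpha<\lambda\ \text{limit}\rangle$ is a $\square(\lambda,{<}\kappa)$-sequence, I will manufacture from it, for any prescribed stationary $S\subseteq S^\lambda_{{\ge}\kappa}$, a family of fewer than $\kappa$-many stationary subsets of $S$ that does not reflect simultaneously, so that $\refl({<}\kappa,S)$ fails. The engine behind this is the rigidity of a width-${<}\kappa$ coherent sequence at ordinals of cofinality ${\ge}\kappa$: if $\cf(\alpha)\ge\kappa$ — which holds for every $\alpha\in S$, and, as we will see, for every reflection point that could arise — then $E_\alpha:=\bigcap\calC_\alpha$ is a \emph{club} in $\alpha$, being an intersection of $|\calC_\alpha|<\kappa\le\cf(\alpha)$ many clubs. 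Coherence of $\vec{\calC}$ then gives $C\cap\beta\in\calC_\beta$ whenever $C\in\calC_\alpha$ and $\beta\in\acc(C)$; in particular $E_\beta\subseteq E_\alpha\cap\beta$ for all $\beta\in\acc(E_\alpha)$. Thus the derived sequence $\langle E_\alpha\st\cf(\alpha)\ge\kappa\rangle$ coheres downward, and the obstruction to threading it is exactly that these inclusions can be proper.

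To track where the proper inclusions come from, I first normalize. Writing $\calC_\alpha=\{C^\alpha_i\st i<\mu_\alpha\}$ with $\mu_\alpha<\kappa$, and using that the nonstationary ideal on $\lambda$ is $\lambda$-complete while $\kappa<\lambda=\cf(\lambda)$, I can shrink $S$ to a stationary $S'\subseteq S$ on which $\mu_\alpha$ is bounded by a fixed $\nu<\kappa$; after padding the enumerations, $\calC_\alpha=\{C^\alpha_i\st i<\nu\}$ for every $\alpha\in S'$, so that ``the $i$-th club of $\alpha$'' makes uniform sense. Call $\beta<\alpha$ (both in $S'$) \emph{$\alpha$-good} if $\beta\in\acc(E_\alpha)$ and $C^\beta_i=C^\alpha_i\cap\beta$ for all $i<\nu$; a short computation (using $E_\beta=E_\alpha\cap\beta$ at good points) shows that $\alpha$-goodness propagates downward, so for each fixed $\alpha\in S'$ the $\alpha$-good ordinals form a coherent system below $\alpha$.

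Now I extract the family. For each $i<\nu$ let $T_i$ be a discontinuity set recording, roughly, that $C^\alpha_i$ fails to be the increasing union of the clubs $C^\beta_i$ as $\beta$ ranges over a tail of the $\alpha$-good ordinals. Two things then need to be shown. First, each $T_i$ is stationary: were some $T_i$ nonstationary, the $i$-th clubs would cohere perfectly on a club of $\alpha$, and gluing them — after a closure argument guaranteeing that the resulting subset of $\lambda$ is a club \emph{all} of whose limit points $\beta$ see an initial segment lying in $\calC_\beta$, not merely the good ones — would produce a thread through $\vec{\calC}$, contradicting our assumption. Second, $\{T_i\st i<\nu\}$ does not reflect simultaneously: if $\gamma$ were a common reflection point, then, since each $T_i\subseteq S^\lambda_{{\ge}\kappa}$, one checks $\cf(\gamma)>\kappa$ (a stationary subset of $S^\lambda_{{\ge}\kappa}$ can only reflect at points of cofinality ${>}\kappa$), so $E_\gamma$ is club in $\gamma$ and $\acc(E_\gamma)$ is a club containing ordinals of every cofinality below $\cf(\gamma)$. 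For $\alpha\in\acc(E_\gamma)$ we have $C\cap\alpha\in\calC_\alpha$ for each $C\in\calC_\gamma$; pressing down inside $\gamma$ on the resulting index functions (there are fewer than $\cf(\gamma)$-many relevant index patterns once one is careful), one finds that on a club of $\alpha\in\acc(E_\gamma)$ the enumerations are aligned coherently along the tower, forcing such $\alpha$ to lie outside every $T_i$. Hence $T_i\cap\gamma$ is nonstationary in $\gamma$ for each $i$, contradicting that $\gamma$ reflects $T_i$.

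The step I expect to be the main obstacle is making the two demands on the sets $T_i$ compatible: stationarity must be arranged delicately enough that the gluing in the first part yields an \emph{honest} thread of $\vec{\calC}$ (the surgery ensuring every limit point — not just the good ones — sees an initial segment in the right $\calC_\beta$ is the most fragile point, and is precisely why width ${>}1$ resists the canonical-thread arguments available for $\square(\lambda)$), while the non-reflection argument needs a reflection point $\gamma$ to force alignment of the enumerations at a member of each $T_i$ simultaneously — so the coherence forced at coordinate $i$ must actually draw on all coordinates at once, which is what blocks the cheaper route of reflecting a single $T_i$. Two further complications: $\kappa$ may be singular, so the uniform index set $\nu<\kappa$ must be obtained via $\lambda$-completeness of the nonstationary ideal rather than assumed; and a reflection point $\gamma$ may have cofinality strictly between $\kappa$ and $\sup\{\otp(C)\st C\in\calC_\gamma\}$, so the pressing-down inside $\gamma$ must be organized around $\cf(\gamma)$ rather than $\kappa$, with a verification that only fewer than $\cf(\gamma)$-many index patterns are genuinely in play.
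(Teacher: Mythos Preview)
The paper does not prove this theorem; it is cited from \cite{Hayut-LambieHanson:SimultaneousReflectionAndSquare} without proof. The closest in-paper analogue is Theorem~\ref{sdsr_square_incompatibility_theorem}, whose argument is quite different from yours: one defines, for each limit $\alpha$ and $i<j_\alpha$, the set $S_{\alpha,i}=\{\beta\in S\setminus(\alpha+1):C_{\beta,k}\cap\alpha\neq C_{\alpha,i}\text{ for all }k\}$, proves via a Kurepa tree argument that all $S_{\alpha,i}$ are stationary once $\alpha\ge\alpha_0$, and then derives a contradiction from reflection. Adapting that to the present hypotheses, one shows directly that if $\{S_{\alpha,i}:i<j_\alpha\}$ reflected simultaneously at some $\gamma$, then (since $S\subseteq S^\lambda_{\ge\kappa}$ forces $\cf(\gamma)>\kappa$) for any $C\in\calC_\gamma$ and any $i$, every $\beta\in\acc(C)$ above $\alpha$ lies outside $S_{\alpha,i}$ whenever $C\cap\alpha=C_{\alpha,i}$ --- so $\alpha\notin\acc(C)$ for any $C\in\calC_\gamma$; one then closes off by choosing $\alpha$ appropriately. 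The point is that the witnessing family is indexed by $\calC_\alpha$ for a \emph{single} well-chosen $\alpha$, not by a global enumeration of the square sequence.

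Your plan has a genuine gap at exactly the point you flag. The sets $T_i$ are never defined precisely, and the whole approach hinges on fixing an enumeration $\calC_\alpha=\{C^\alpha_i:i<\nu\}$ and then tracking coherence \emph{index by index} across different $\alpha$. But this enumeration is arbitrary, so there is no reason for $C^\alpha_i$ and $C^\beta_i$ to bear any relationship; your ``$\alpha$-good'' condition, requiring $C^\beta_i=C^\alpha_i\cap\beta$ for \emph{all} $i<\nu$ simultaneously, is so restrictive that there is no reason to expect it to hold on any useful set of $\beta$'s. More seriously, even if some $T_i$ were nonstationary and the $i$-th clubs cohered along a club of $\alpha\in S'$, the glued object would only be known to have initial segments in $\calC_\beta$ for $\beta\in S'\subseteq S^\lambda_{\ge\kappa}$; at limit points of cofinality $<\kappa$ you have no control, and the ``surgery'' you allude to for fixing this is not supplied and does not follow from anything you have set up. The non-reflection half has the same defect: the pressing-down you sketch would align enumerations at reflection points, but since the enumerations were arbitrary to begin with, aligning them tells you nothing about the square sequence itself. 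The route through globally indexed $T_i$'s is a dead end; the workable arguments localize to a single $\calC_\alpha$.
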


The first author came to this from a different angle, looking for ways to derive the strongest possible failure of these square principles from the assumption of certain forcing axioms. Forcing axioms such as Martin's Maximum ($\mathsf{MM}$) or the subcomplete forcing axiom ($\mathsf{SCFA}$) imply reflection principles of the form $\refl(\omega_1,S^\lambda_\omega)$ for sufficiently large regular $\lambda$ (in the case of $\mathsf{MM}$, $\lambda>\omega_1$ is enough, while in the case of $\mathsf{SCFA}$, $\lambda>2^\omega$ is needed). So the above theorem yields only the failure of $\square(\lambda,{<}\omega)$ as a consequence of these types of stationary reflection principles.

Looking to find improvements of Theorem~\ref{hlh_thm_213} that would
remove the hypothesis that $S \subseteq S^\lambda_{\geq \kappa}$, Hayut and the second
author introduced the following definition.

\begin{definition}[\cite{Hayut-LambieHanson:SimultaneousReflectionAndSquare}]
\label{def:Full}
  A $\square(\lambda, {<}\kappa)$-sequence $\langle \calC_\alpha \st \alpha <
  \lambda, ~ \alpha \text{ limit}\rangle$ is \emph{full} if for unboundedly many
  $\alpha < \lambda$, there is a club of $\beta < \lambda$ such that
  $\alpha$ is a limit point of some $C \in \calC_\beta$.
\end{definition}

They then proved that the requirement that $S \subseteq S^\lambda_{\geq \kappa}$
can be removed if one only wants to preclude the existence of \emph{full}
$\square(\lambda, {<}\kappa)$-sequences.

\begin{theorem}[{\cite[Thm.~2.18]{Hayut-LambieHanson:SimultaneousReflectionAndSquare}}]
\label{thm:SimulReflectionImpliesNoFullSquare}
Suppose that $\kappa<\lambda$ are uncountable cardinals, where $\lambda$ is regular, and suppose that $\refl({<}\kappa,S)$ holds, for some stationary $S\subseteq\lambda$. Then there is no full $\square(\lambda,{<}\kappa)$-sequence.
\end{theorem}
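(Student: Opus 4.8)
The plan is to argue by contradiction. Assume $\vec{\calC}=\langle\calC_\alpha\st\alpha<\lambda,\ \alpha\ \text{limit}\rangle$ is a full $\square(\lambda,{<}\kappa)$-sequence and that $\refl({<}\kappa,S)$ holds for some stationary $S\subseteq\lambda$. The goal is to produce a thread through $\vec{\calC}$, which a $\square(\lambda,{<}\kappa)$-sequence cannot have.

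\emph{Step 1: reduce to building a coherent selector.} First I would isolate the following combinatorial core: it suffices to find an unbounded $B\subseteq\lambda$ together with clubs $e_\alpha\in\calC_\alpha$ for $\alpha\in B$ forming a \emph{coherent selector}, i.e., whenever $\alpha<\alpha'$ are both in $B$ we have $\alpha\in\acc(e_{\alpha'})$ and $e_{\alpha'}\cap\alpha=e_\alpha$. Indeed, given such data one checks, using that the $e_\alpha$'s end-extend one another, that $T:=\bigcup_{\alpha\in B}e_\alpha$ is closed and unbounded in $\lambda$ and that, for any $\gamma\in\acc(T)$ and any $\alpha\in B$ with $\alpha>\gamma$, one has $T\cap\gamma=e_\alpha\cap\gamma$, which lies in $\calC_\gamma$ by coherence of $\vec{\calC}$; so $T$ is a thread. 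Equivalently, writing $\mathcal{T}(\vec{\calC})$ for the tree whose $\alpha$-th level is $\calC_\alpha$ ordered by coherent restriction, a coherent selector on an unbounded set is exactly a cofinal branch, and $\vec{\calC}$ is threaded iff $\mathcal{T}(\vec{\calC})$ has one. Note that neither hypothesis alone can produce such a selector, since an arbitrary $\square(\lambda,{<}\kappa)$-sequence has none.

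\emph{Step 2: extract candidates from fullness.} Let $A\subseteq\lambda$ be the unbounded set of ordinals witnessing fullness. For each $\alpha\in A$ fix a club $D_\alpha$ such that every $\beta\in D_\alpha$ has some $C\in\calC_\beta$ with $\alpha\in\acc(C)$ (whence $C\cap\alpha\in\calC_\alpha$ by coherence). Since $|\calC_\alpha|<\kappa<\lambda$ and a union of fewer than $\lambda$ many nonstationary subsets of $\lambda$ is nonstationary, a pigeonhole on $D_\alpha$ yields a distinguished club $E_\alpha\in\calC_\alpha$ for which
\[
  R_\alpha:=\{\beta<\lambda\st \exists C\in\calC_\beta\ (\alpha\in\acc(C)\ \text{and}\ C\cap\alpha=E_\alpha)\}
\]
is stationary. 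So fullness equips a cofinal set of $\alpha$ with a candidate club $E_\alpha$ that is realized as a coherent restriction by stationarily many $\beta>\alpha$; this is precisely the feature a non-threaded sequence can lack, and it is what will keep the selector I build from dead-ending.

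\emph{Step 3: glue via reflection.} The final and hardest step is to use $\refl({<}\kappa,S)$ to splice the $E_\alpha$'s into a coherent selector on an unbounded set. I would attach to the configuration a family of fewer than $\kappa$ many stationary subsets of $S$ — roughly, sets recording, along $S$, which coherent restriction at a given point admits an extension witnessed along $S$ — reflect this family simultaneously at some $\delta<\lambda$ (of uncountable cofinality, as $\refl$ provides), and argue that at such a $\delta$ the candidates $E_\alpha$ for a cofinal set of $\alpha<\delta$ must agree coherently, yielding a coherent selector cofinal in $\delta$ whose union lies in $\calC_\delta$; the stationarity of the sets $R_\alpha$ then lets this partial selector be continued past $\delta$, and one runs the construction up to $\lambda$ (iterating the reflection, or reflecting once at a sufficiently closed point). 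I expect the bulk of the work, and the main obstacle, to lie exactly in this bookkeeping: choosing the ${<}\kappa$ stationary subsets of $S$ so that a common reflection point is genuinely forced to be coherent, and arranging the continuation so that the partial selectors cohere globally rather than only locally. Both hypotheses enter essentially here: the uncountability of $\kappa$ gives the simultaneous-reflection step enough room (it is what Theorem~\ref{hlh_thm_213} trades for the restriction $S\subseteq S^\lambda_{\ge\kappa}$), while fullness — through the stationary sets $R_\alpha$ — is what prevents the reflected partial selector from terminating.
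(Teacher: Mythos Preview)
The paper does not actually prove this theorem; it is quoted from \cite{Hayut-LambieHanson:SimultaneousReflectionAndSquare}. However, the paper's proof of Theorem~\ref{sdsr_square_incompatibility_theorem} follows essentially the same template as the cited proof, so one can compare your proposal against that.

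Your Steps~1 and~2 are fine as far as they go, but Step~3 is not a proof: you acknowledge that ``the bulk of the work, and the main obstacle'' lies there, and you do not specify which ${<}\kappa$ stationary subsets of $S$ are to be reflected or how a common reflection point forces coherence. More importantly, the overall strategy of recursively \emph{building} a coherent selector by iterated reflection is not how the argument goes, and I do not see how to make it work: the sets $R_\alpha$ you produce in Step~2 are only stationary, and there is no mechanism offered that forces $E_\alpha$ and $E_{\alpha'}$ to be coherent for $\alpha<\alpha'$ both in your set.

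The missing idea is that reflection is used \emph{locally}, at a single $\alpha$ in the full set $A$, to upgrade ``stationary'' to ``co-club''. Concretely: enumerate $\calC_\alpha=\{C_{\alpha,i}:i<j_\alpha\}$ and set $S_{\alpha,i}=\{\beta\in S\setminus(\alpha+1): \text{no }C\in\calC_\beta\text{ has }C\cap\alpha=C_{\alpha,i}\}$. If every $S_{\alpha,i}$ were stationary, apply $\refl({<}\kappa,S)$ to the family $\{S_{\alpha,i}:i<j_\alpha\}$, obtaining a reflection point $\gamma\in D_\alpha$; pick $C^*\in\calC_\gamma$ with $\alpha\in\lim(C^*)$, say $C^*\cap\alpha=C_{\alpha,i_0}$, and then any $\beta\in S_{\alpha,i_0}\cap\lim(C^*)$ yields an immediate contradiction. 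Hence for each $\alpha\in A$ there is $i_\alpha$ with $S_{\alpha,i_\alpha}$ \emph{non}stationary. Now the tree argument from the Claim in Theorem~\ref{sdsr_square_incompatibility_theorem} applies verbatim to $(A,<_T)$ and produces a thread. So reflection is applied once per $\alpha$, to a family indexed by $\calC_\alpha$, and the global gluing is handled afterward by the Kurepa tree argument --- not by reflection.
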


So this theorem shows that there is no full $\square(\lambda,\omega_1)$-sequence if $\refl(\omega_1,S^\lambda_\omega)$ holds and $\lambda>\omega_2$, for example. The following theorem serves to preclude the existence of a sequence that's not full:

\begin{theorem}[{\cite[Thm.~2.20]{Hayut-LambieHanson:SimultaneousReflectionAndSquare}}]
  \label{hlh_thm}
Suppose that $\kappa<\lambda$ are regular, uncountable cardinals. If $\refl(2,\lambda)$ holds, then there is no $\square(\lambda,{<}\kappa)$ sequence that is not full.
\end{theorem}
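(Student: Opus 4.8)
The plan is to assume, toward a contradiction, that $\vec{\calC}$ is a $\square(\lambda,{<}\kappa)$-sequence that is not full, and to produce a thread through $\vec{\calC}$, contradicting that it is a $\square(\lambda,{<}\kappa)$-sequence. The first, elementary, observation is that a coherent sequence of length $\lambda$ which admits a thread $T$ is automatically full: for each $\alpha\in\acc(T)$ the tail $\acc(T)\setminus(\alpha+1)$ is a club witnessing that $\alpha$ is a limit point of a member of $\calC_\beta$ (namely $T\cap\beta$) for club-many $\beta<\lambda$. Hence ``not full'' already entails ``no thread,'' and it suffices to derive a contradiction from the assumptions that $\vec{\calC}$ is coherent of width ${<}\kappa$ and not full, together with $\refl(2,\lambda)$. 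I would fix $\gamma<\lambda$ witnessing non-fullness, so that for every limit ordinal $\alpha\in(\gamma,\lambda)$ the set $D_\alpha:=\{\beta<\lambda\mid \alpha\in\acc(C)\text{ for some }C\in\calC_\beta\}$ contains no club, equivalently $E_\alpha:=\{\beta<\lambda\mid \alpha\notin\acc(C)\text{ for all }C\in\calC_\beta\}$ is stationary.

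Next I would record two facts for repeated use. First, a reflection computation: if $\gamma<\alpha<\delta$ and $E_\alpha\cap\delta$ is stationary in $\delta$, then $\alpha\notin\acc(C)$ for every $C\in\calC_\delta$; indeed, since $\cf(\delta)>\omega$, a tail of $\acc(C)$ is club in $\delta$, so it meets $E_\alpha$ at some $\beta>\alpha$, whence by coherence $C\cap\beta\in\calC_\beta$ and $\alpha\notin\acc(C\cap\beta)=\acc(C)\cap\beta$. Second, under $\refl(2,\lambda)$ the simultaneous reflection points of any pair of stationary subsets of $\lambda$ are unbounded in $\lambda$ — one simply reflects the tails of the two sets above any prospective bound. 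The role of the width bound ${<}\kappa$, together with the regularity of $\kappa$ and the $\kappa$-completeness of the nonstationary ideal on an ordinal of cofinality $\ge\kappa$, is to allow us to amalgamate the fewer-than-$\kappa$-many ``local'' choices available inside each $\calC_\delta$ into a single coherent one at the critical stages.

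The heart of the argument is then a transfinite recursion of length $\lambda$ producing an increasing, continuous sequence $\langle\delta_\xi\mid \xi<\lambda\rangle$ of ordinals above $\gamma$ together with clubs $C_\xi\in\calC_{\delta_\xi}$ that cohere — $C_\xi=C_{\xi'}\cap\delta_\xi$ and $\delta_\xi\in\acc(C_{\xi'})$ whenever $\xi<\xi'$ — so that $T:=\bigcup_{\xi<\lambda}C_\xi$ is a club threading $\vec{\calC}$. At a successor step one has the current top node $(\delta_\xi,C_\xi)$ and must find $\delta_{\xi+1}>\delta_\xi$ and $C_{\xi+1}\in\calC_{\delta_{\xi+1}}$ with $\delta_\xi\in\acc(C_{\xi+1})$ and $C_{\xi+1}\cap\delta_\xi=C_\xi$; the plan is to obtain $\delta_{\xi+1}$ as a simultaneous reflection point of a pair of stationary sets tailored to the construction so far — one of them forcing $\delta_{\xi+1}$ to ``see'' $\delta_\xi$ as a limit point of some member of $\calC_{\delta_{\xi+1}}$, the other constraining $\calC_{\delta_{\xi+1}}$ enough that, among its fewer than $\kappa$ members, one extends $C_\xi$ (this is exactly where the width bound is consumed, via the completeness fact above). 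Limit stages are handled by continuity, after checking that the coherent club assembled so far again lies in the relevant $\calC_{\delta_\xi}$; if this is not automatic, the requirement is folded into the reflecting pair used at the next successor step.

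The step I expect to be the main obstacle is precisely this extension/amalgamation step, and for a structural reason: what it accomplishes — upgrading the triviality that every $\calC_\delta$ with $\cf(\delta)>\omega$ contains a ``local thread'' to the existence of a genuine thread of length $\lambda$ — is exactly the incompactness that $\square(\lambda,{<}\kappa)$ asserts, so $\refl(2,\lambda)$ cannot be invoked once and for all but must be fed back into the recursion cofinally often, with each application controlling not merely \emph{whether} $\delta_{\xi+1}$ sees $\delta_\xi$ but \emph{which} element of $\calC_{\delta_{\xi+1}}$ witnesses this. Choosing the right pair of stationary sets at each stage — this is where non-fullness enters, through the sets $E_\alpha$ — and verifying that the limit-stage clubs stay inside the sequence, is where the bookkeeping is most delicate; a fallback worth keeping in mind is to argue instead that an appropriately chosen simultaneous reflection point $\delta$ would be forced to satisfy $|\calC_\delta|\ge\kappa$, directly contradicting that $\vec{\calC}$ has width ${<}\kappa$.
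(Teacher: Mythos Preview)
The paper does not contain a proof of this theorem --- it is quoted from \cite{Hayut-LambieHanson:SimultaneousReflectionAndSquare} as a black box --- so there is no argument in the present paper to compare yours against. Evaluated on its own, your proposal has a genuine gap.

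Your preliminaries are correct and useful: from non-fullness you extract $\gamma<\lambda$ with $E_\alpha$ stationary for every limit $\alpha>\gamma$, and the computation showing that any reflection point of $E_\alpha$ must itself lie in $E_\alpha$ is right and is indeed the key elementary fact. But the core of your plan, the length-$\lambda$ recursion producing a thread, is not carried out, and the successor step is where it breaks. At stage $\xi+1$ you need some $\delta_{\xi+1}>\delta_\xi$ and a $C_{\xi+1}\in\calC_{\delta_{\xi+1}}$ with $\delta_\xi\in\acc(C_{\xi+1})$ and $C_{\xi+1}\cap\delta_\xi=C_\xi$. Non-fullness tells you only that $E_{\delta_\xi}$ is stationary --- a statement about where $\delta_\xi$ \emph{fails} to be accumulated --- and says nothing about the existence of such a $\delta_{\xi+1}$; indeed the set of $\beta$ for which $\delta_\xi\in\acc(C)$ for some $C\in\calC_\beta$ with $C\cap\delta_\xi=C_\xi$ could be bounded. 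You say the reflecting pair should be ``tailored to the construction so far,'' with one set forcing $\delta_{\xi+1}$ to accumulate $\delta_\xi$ and the other selecting the correct extension among the ${<}\kappa$ candidates, but you never name either set, and naming them is precisely the content of the argument. The difficulty you flag in your last paragraph is not bookkeeping; it is the entire proof, and your proposal does not supply it.

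Your fallback --- arranging a simultaneous reflection point $\delta$ at which one is forced to have $|\calC_\delta|\ge\kappa$ --- is also not executed, and as stated it is no more than a hope. In short, you have correctly isolated the relevant stationary sets $E_\alpha$ and the one-line reflection lemma about them, but neither of your two proposed routes from there to a contradiction is actually walked.
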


Notice that $\refl(1, \lambda)$, and hence $\refl(2, \lambda)$, fails if
$\lambda = \mu^+$ where $\mu$ is regular, as witnessed by $S^\lambda_\mu$.
Therefore, Theorem \ref{hlh_thm} is only nontrivial if $\lambda$ is either
weakly inaccessible or the successor of a singular cardinal.
In addition, since the stationary reflection principles derived from the abovementioned forcing axioms only yield stationary reflection for subsets of $S^\lambda_\omega$, Theorem \ref{hlh_thm} is not particularly relevant for
the study of their consequences. For these reasons, the first author was led to introduce the diagonal stationary reflection principle, which we formulate in the following definition, along with some natural variants.

\begin{definition}
\label{defn:DiagonalReflection}
Let $\lambda$ be a regular cardinal, let $S\subseteq\lambda$ be stationary, and let $\kappa<\lambda$. The \emph{diagonal stationary reflection principle} $\DSR({<}\kappa,S)$ asserts that whenever $\langle S_{\alpha,i}\st\alpha<\lambda, ~ i<j_\alpha\rangle$ is a matrix of stationary subsets of $S$, where $j_\alpha<\kappa$ for every $\alpha<\lambda$, then there are a $\gamma<\lambda$ of uncountable cofinality and a club $F\subseteq\gamma$ such that for every
$\alpha\in F$ and every $i<j_\alpha$, $S_{\alpha,i}$ reflects at $\gamma$. The version of the principle in which we only require $j_\alpha\le\kappa$ is denoted $\DSR(\kappa,S)$.

We will also be considering weakenings of this principle. Let $\uDSR({<}\kappa, S)$
and $\sDSR({<}\kappa, S)$ result from modifying the definition of $\DSR({<}\kappa, S)$
by requiring that the subset $F\subseteq\gamma$ be unbounded in $\gamma$ or
stationary in $\gamma$, respectively. As with $\refl$, if $S$ is a stationary subset
of a regular uncountable cardinal $\lambda$, let $\DSR^*({<}\kappa, S)$
denote the statement that $\DSR({<}\kappa, S)$ holds and continues to hold in
any extension by a $\lambda$-directed closed forcing notion of size $\lambda$
(and similarly for $\uDSR^*$ and $\sDSR^*$).
\end{definition}

\begin{remark}
  It is easily seen that each of the above principles, as well as each of the other
  stationary reflection principles considered in this paper, is equivalent to
  the apparent strengthening formed by requiring the existence of
  stationarily many such $\gamma$ at which reflection holds as opposed to just one.
\end{remark}

The original point of introducing these reflection principles was the fact that,
for all regular $\lambda \geq \omega_2$, the principle
$\DSR^*(\omega_1, S^\lambda_\omega)$ follows from either
$\mathsf{MM}$ or $\mathsf{SCFA}+\mathsf{CH}$, and the first author proved
that $\DSR(\omega_1, S^\lambda_\omega)$ implies the failure of $\square(\lambda, \omega_1)$ if $\lambda>\omega_2$ and the failure of $\square(\lambda,\omega)$ if $\lambda=\omega_2$:

\begin{theorem}[{\cite[Thm.~3.4]{Fuchs:DiagonalReflection}}] \label{fuchs_thm}
Let $\kappa < \lambda$ be cardinals, with $\lambda$ regular, and suppose that $\DSR({<}\kappa,S)$ holds for some stationary set $S\subseteq\lambda$. Then $\square(\lambda,{<}\kappa)$ fails.
\end{theorem}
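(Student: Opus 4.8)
The plan is to argue by contradiction. Suppose $\square(\lambda,{<}\kappa)$ holds and fix a $\square(\lambda,{<}\kappa)$-sequence $\vec\calC=\langle\calC_\alpha\st\alpha<\lambda,\ \alpha\text{ limit}\rangle$, with each $\calC_\alpha=\{C^\alpha_i\st i<j_\alpha\}$ enumerated, so that $j_\alpha<\kappa$ by the width hypothesis. I will produce a thread through $\vec\calC$, contradicting the assumption that $\vec\calC$ has none. To do this I would construct, by recursion of length $\lambda$, an increasing sequence $\langle\gamma_\xi\st\xi<\lambda\rangle$ cofinal in $\lambda$ together with clubs $E_\xi\in\calC_{\gamma_\xi}$ such that $E_\xi\cap\gamma_\eta=E_\eta$ whenever $\eta<\xi$. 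A routine coherence bookkeeping (maintaining this identity through limits and using that $\vec\calC$ is coherent at limit points of each $E_\xi$) then shows that $T:=\bigcup_{\xi<\lambda}E_\xi$ satisfies $T\cap\gamma_\xi=E_\xi$ for every $\xi$; hence $T$ is a club in $\lambda$ and $T\cap\beta\in\calC_\beta$ for every limit point $\beta$ of $T$, i.e.\ $T$ is a thread. The entire construction thus reduces to the following claim, applied at both successor and limit stages: \emph{given $\delta<\lambda$ and a club $D\subseteq\delta$ that coheres with $\vec\calC\restriction\delta$ (meaning $D\cap\beta\in\calC_\beta$ for every limit point $\beta$ of $D$), there exist $\gamma\in(\delta,\lambda)$ of uncountable cofinality and $E\in\calC_\gamma$ with $\delta\in\acc(E)$ and $E\cap\delta=D$.} At a successor stage one feeds in $\delta=\gamma_\eta$, $D=E_\eta\in\calC_{\gamma_\eta}$; at a limit stage one takes $\delta=\sup_{\eta<\xi}\gamma_\eta$ and $D=\bigcup_{\eta<\xi}E_\eta$, which coheres with $\vec\calC\restriction\delta$ by the bookkeeping but which in general need \emph{not} be a member of $\calC_\delta$ — this failure is exactly the incompactness phenomenon that $\DSR$ is meant to neutralize.

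To prove the claim, from $\vec\calC$, $D$, and $\delta$ I would build a matrix of stationary subsets of $S$ to which $\DSR({<}\kappa,S)$ applies. Fixing once and for all a partition of $S$ into $\lambda$ many stationary pieces and a coding of ordinals and indices, one assigns to each limit $\alpha<\lambda$ a family $\langle S_{\alpha,i}\st i<j_\alpha\rangle$ of at most $|\calC_\alpha|<\kappa$ stationary subsets of $S$ which codes the structure of $\vec\calC$ below $\alpha$ and, for $\alpha\in\acc(D)$, ``distinguishes'' the member $D\cap\alpha$ of $\calC_\alpha$. The coding should be arranged so that, whenever $\gamma$ has uncountable cofinality and a club $F\subseteq\gamma$ witnesses that $S_{\alpha,i}$ reflects at $\gamma$ for all $\alpha\in F$ and $i<j_\alpha$, the distinguished choices along $F$ are forced to amalgamate into a single club $E\in\calC_\gamma$; intersecting $F$ with $\acc(E)$ and pushing up to $\gamma$ then yields $E\cap\delta=D$ and $\delta\in\acc(E)$, as required. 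For \emph{full} $\square(\lambda,{<}\kappa)$-sequences (Definition~\ref{def:Full}) the naive coding $S_{\alpha,i}=\{\beta\in S\st\beta>\alpha,\ (\exists C\in\calC_\beta)(\alpha\in\acc(C)\wedge C\cap\alpha=C^\alpha_i)\}$ works, since fullness guarantees enough of these sets are stationary; in that case the claim already follows from $\refl({<}\kappa,S)$, which is itself immediate from $\DSR({<}\kappa,S)$ (apply $\DSR$ to a matrix whose rows are all equal to a prescribed family of fewer than $\kappa$ stationary subsets of $S$), matching Theorem~\ref{thm:SimulReflectionImpliesNoFullSquare}.

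The main obstacle is making this coding succeed for \emph{non-full} sequences: there a given $\alpha$ may fail to be an accumulation point of any member of $\calC_\beta$ for every $\beta>\alpha$, so the naive $S_{\alpha,i}$ collapse, and one must instead code the relevant local data into a gadget bounded below $\alpha$; likewise one must handle the limit stages, where the club $D$ being extended does not belong to $\calC_\delta$. Verifying that diagonal reflection of such a matrix at $\gamma$ genuinely pins down a member of $\calC_\gamma$ extending $D$ — rather than merely some club cohering with $\vec\calC\restriction\gamma$ that might lie outside every $\calC_\beta$ — is the heart of the matter, and it is here that the diagonal form of reflection, which constrains the sets $S_{\alpha,i}$ for $\alpha$ ranging over an entire club $F$ simultaneously, is indispensable: by Theorems~\ref{hlh_thm_213} and~\ref{thm:SimulReflectionImpliesNoFullSquare}, ordinary simultaneous reflection of fewer than $\kappa$ stationary sets reaches only the full part of the sequence (it needs either $S\subseteq S^\lambda_{{\ge}\kappa}$ or fullness), and by Theorem~\ref{hlh_thm} the alternative route through $\refl(2,\lambda)$ is available only when $\lambda$ is weakly inaccessible or the successor of a singular cardinal, whereas the desired conclusion here is unrestricted.
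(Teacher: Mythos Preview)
Your recursive thread-building strategy has a structural gap that cannot be closed. Your key extension claim asserts: given $\delta<\lambda$ and a club $D\subseteq\delta$ cohering with $\vec\calC\restriction\delta$, there exist $\gamma>\delta$ and $E\in\calC_\gamma$ with $\delta\in\acc(E)$ and $E\cap\delta=D$. But if $\delta\in\acc(E)$ and $E\in\calC_\gamma$, coherence of $\vec\calC$ forces $E\cap\delta\in\calC_\delta$; hence the claim already entails $D\in\calC_\delta$. At a limit stage $\xi$ of your recursion you take $\delta=\sup_{\eta<\xi}\gamma_\eta$ and $D=\bigcup_{\eta<\xi}E_\eta$, and you yourself observe that $D$ need not lie in $\calC_\delta$. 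In that case your claim is outright false, and no reflection principle can repair it: the obstruction is a one-line consequence of coherence, not an incompactness phenomenon that $\DSR$ could ``neutralize''. Indeed, if your claim held at every stage, the recursion would run in $\mathsf{ZFC}$ alone and no coherent sequence of width ${<}\lambda$ could ever be without a thread. The vague coding scheme you sketch does nothing to avoid this; whatever the matrix $\langle S_{\alpha,i}\rangle$ encodes, reflection at $\gamma$ cannot manufacture an $E\in\calC_\gamma$ with $E\cap\delta=D$ when $D\notin\calC_\delta$.

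The paper's argument (given as the proof of Theorem~\ref{sdsr_square_incompatibility_theorem}, which establishes the stronger conclusion from $\sDSR$) is entirely different: there is no recursion, and diagonal reflection is applied exactly once to a single explicit matrix. One sets
\[
S_{\alpha,i}=\{\beta\in S\setminus(\alpha+1)\st\text{for all }k<j_\beta,\ C_{\beta,k}\cap\alpha\neq C_{\alpha,i}\},
\]
the set of $\beta\in S$ at which $C_{\alpha,i}$ is \emph{not} continued. A short tree argument (levels of size ${<}\kappa$, height $\lambda$, Kurepa's theorem) shows that if these sets are nonstationary for cofinally many $\alpha$, one can read off a thread directly; hence they are stationary for all $\alpha\geq\alpha_0$. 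Applying $\DSR({<}\kappa,S)$ to this matrix yields $\gamma$ of uncountable cofinality and a club $F\subseteq\gamma$ with each $S_{\alpha,i}$ reflecting at $\gamma$ for $\alpha\in F$. Now pick any $C\in\calC_\gamma$, choose $\alpha\in\lim(C)\cap F$ and the $i$ with $C\cap\alpha=C_{\alpha,i}$, and then $\beta\in\lim(C)\cap S_{\alpha,i}$: coherence gives some $k$ with $C_{\beta,k}\cap\alpha=C\cap\alpha=C_{\alpha,i}$, contradicting $\beta\in S_{\alpha,i}$. The point is that the matrix records where extension \emph{fails}, and reflection of those failures at a single $\gamma$ is already contradictory; one never tries to build a thread.
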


We will reduce the hypothesis of the theorem from $\DSR({<}\kappa, S)$ to
$\sDSR({<}\kappa, S)$ in Theorem \ref{sdsr_square_incompatibility_theorem}.

If $\lambda$ is a regular cardinal, $S \subseteq \lambda$ is stationary, and
$\kappa < \lambda$, then it is clear from Definition \ref{defn:DiagonalReflection} that
\[
  \DSR({<}\kappa, S) \Rightarrow \sDSR({<}\kappa, S) \Rightarrow \uDSR({<}\kappa, S)
  \Rightarrow \refl({<}\kappa, S).
\]
Our obvious initial question, raised in light of Theorems
\ref{hlh_thm_213} and \ref{fuchs_thm}, was whether $\refl(\omega_1,S)$
implies $\DSR(\omega_1,S)$ and, more generally, the extent to which the arrows
in the above sequence of implications can be either reversed or strengthened.
The paper is organized as follows. In Section \ref{sec:Implications}, we
establish some implications and equivalences in ZFC. We will also prove
our strengthening of Theorem \ref{fuchs_thm}, showing that $\sDSR({<}\kappa, S)$
for some stationary $S \subseteq \lambda$ is enough to ensure the failure of
$\square(\lambda, {<}\kappa)$. Section \ref{sec:Separation} contains results
that separate various stationary reflection principles by showing that
certain implications do not hold in ZFC. A consequence of the results of
this section will be the fact that, in general, none of the arrows in the
above sequence of implications is reversible. In Section \ref{sec:DSRandSRP}, we extend a
result of Larson \cite[Theorem 4.6]{Larson:SeparatingSRP} by proving, among other things, that
the strong reflection principle, $\mathsf{SRP}$, does not imply
$\uDSR(1, S^\lambda_\omega)$ for any regular $\lambda > \omega_2$.
Then, in Section \ref{sec:Square}, we prove that our strengthening of
Theorem \ref{fuchs_thm} is sharp in the sense that the principles
$\sDSR({<}\kappa, S)$ and $\square(\lambda, \kappa)$ are compatible with one
another for infinite regular cardinals $\kappa < \lambda$ and a stationary $S \subseteq
\lambda$. Finally, in Section \ref{sec:Questions}, we list some
open questions that are raised by our work.

\section{Implications}
\label{sec:Implications}

We begin this section with our strengthening of Theorem \ref{fuchs_thm}. Here and in the following, for a set of ordinals $C$, we will use the notation $\lim(C)$ for the set of limit points of $C$ below the supremum of $C$.

\begin{theorem} \label{sdsr_square_incompatibility_theorem}
  Suppose that $1 < \kappa < \lambda$ are cardinals, with $\lambda$ regular, and
  suppose that there is a stationary $S \subseteq \lambda$ for which
  $\sDSR({<}\kappa, S)$ holds. Then $\square(\lambda, {<}\kappa)$ fails.
\end{theorem}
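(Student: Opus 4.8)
The plan is to argue by contradiction: assume $\vec{\calC} = \langle \calC_\alpha \st \alpha < \lambda, ~ \alpha \text{ limit}\rangle$ is a $\square(\lambda, {<}\kappa)$-sequence, and use the stationary set $S$ witnessing $\sDSR({<}\kappa, S)$ to produce a thread, contradicting the definition of the sequence. The natural strategy, following the template of Theorem~\ref{fuchs_thm}, is to manufacture, for each limit $\alpha < \lambda$, a collection of at most ${<}\kappa$-many stationary subsets of $S$ which "codes" the clubs in $\calC_\alpha$, so that simultaneous reflection of these sets at a point $\gamma$ forces the clubs in $\calC_\gamma$ to cohere along the reflection witness $F$; the fact that $F$ is only stationary in $\gamma$ (rather than club) is the new difficulty that Theorem~\ref{fuchs_thm}'s proof does not have to confront.

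First I would fix, for each limit $\alpha < \lambda$, an enumeration $\langle C^\alpha_i \st i < j_\alpha\rangle$ of $\calC_\alpha$ with $j_\alpha < \kappa$. For a fixed stationary $S$, I would like to define, for each limit $\alpha$ and each $i < j_\alpha$, a stationary set $S_{\alpha, i} \subseteq S$ that "remembers" $C^\alpha_i$. The cleanest approach is to partition $S$ into $\kappa$-many (or $\lambda$-many, then re-index) pairwise disjoint stationary pieces $\langle T_\eta \st \eta < \kappa\rangle$ once and for all (possible since $S$ is stationary in the regular cardinal $\lambda > \kappa$), and then, roughly, let $S_{\alpha,i}$ be built out of the pieces $T_\eta$ for $\eta$ in some set coding $C^\alpha_i$ — or, more in the spirit of the original argument, intersect the $T_\eta$'s with end-segments determined by $C^\alpha_i$. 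The key bookkeeping point is that from the pattern of which $S_{\alpha,i}$ reflect at $\gamma$ and the witness $F$, one must be able to recover a single club $D \subseteq \gamma$ that coheres with $\vec{\calC}$ on a tail; since $\calC_\gamma$ is nonempty and closed under initial segments at limit points, any such $D$ that is club in $\gamma$ and coheres below $\gamma$ will have $D \cap \beta \in \calC_\beta$ for limit points $\beta$, which is exactly threadedness up to $\gamma$ — but we need the thread to reach all the way to $\lambda$, so actually I would apply $\sDSR$ to a matrix designed so that the reflection point $\gamma$ can be taken arbitrarily large, and then either (a) argue that the coherent "partial threads" obtained at a stationary/club set of $\gamma$'s glue into a genuine thread of length $\lambda$, or (b) more efficiently, set things up so that a single application yields a club $E \subseteq \lambda$ together with coherent data forcing $\vec{\calC}$ to be threaded.

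The main obstacle, as flagged above, is that $F$ is merely stationary in $\gamma$, not club, so one cannot directly conclude that $\calC_\gamma$'s members cohere with a club built from $F$; the reflecting sets only constrain behavior on a stationary set. I expect to handle this by choosing the coding sets $S_{\alpha,i}$ so that the relevant "agreement" they force is robust under passing to the closure: concretely, if $S_{\alpha,i}$ is (a stationary subset of) the set of points where $C^\alpha_i$ "looks a certain way," and $S_{\alpha,i}$ reflects at $\gamma$ for all $\alpha \in F$, $i < j_\gamma$, then one should be able to show that for some fixed $i^* < j_\gamma$, the club $C^\gamma_{i^*}$ agrees with the $C^\alpha_{\cdot}$'s on a set that is stationary in $\gamma$, hence (by coherence of $\vec{\calC}$ itself, which propagates initial segments downward along limit points) agrees on a club, yielding $C^\gamma_{i^*} \in \calC_\gamma$ as the threading restriction. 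I would also need the standard pigeonhole step: since there are only ${<}\kappa < \lambda$ possibilities for $j_\gamma$ and for the "index" $i^*$, while $\gamma$ ranges over stationarily many ordinals, I can fix these on a stationary set and then diagonalize to build the thread. Verifying that this gluing produces a genuine thread — in particular that the club-many coherent initial segments are pairwise compatible and their union is club in $\lambda$ and coheres everywhere — is where the real work lies, and it will rely crucially on the coherence of $\vec{\calC}$ to reconcile overlapping pieces.
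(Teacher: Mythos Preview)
Your proposal has a genuine gap: the overall architecture is wrong, and the missing idea is precisely what makes the paper's argument work. You aim to build a thread through all of $\lambda$ by gluing partial threads obtained at stationarily many reflection points $\gamma$, but you never explain how your coding sets $S_{\alpha,i}$ force coherence at $\gamma$, nor why the partial threads at different $\gamma$'s are pairwise compatible. Partitioning $S$ into $\kappa$ pieces and assigning them according to some feature of $C^\alpha_i$ does not, by itself, make reflection of $S_{\alpha,i}$ at $\gamma$ say anything about the members of $\calC_\gamma$; the ``one should be able to show'' step is exactly where the argument would have to happen, and it is not there.

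The paper's proof avoids all of this by obtaining a \emph{local} contradiction at a single $\gamma$. The key definition is an ``anti-coherence'' set: for each limit $\alpha$ and $i<j_\alpha$, let
\[
S_{\alpha,i}=\{\beta\in S\setminus(\alpha+1)\mid \text{for all }k<j_\beta,\ C_{\beta,k}\cap\alpha\neq C_{\alpha,i}\}.
\]
One first shows (via a tree argument using Kurepa's theorem) that there is $\alpha_0<\lambda$ such that $S_{\alpha,i}$ is stationary for all limit $\alpha\geq\alpha_0$ and $i<j_\alpha$; if not, the nonstationary instances assemble into a tree of height $\lambda$ with levels of size ${<}\kappa$, whose cofinal branch is a thread. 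Then one applies $\sDSR({<}\kappa,S)$ to the matrix $\langle S_{\alpha,i}\rangle_{\alpha\geq\alpha_0}$, obtaining $\gamma$ and a stationary $F\subseteq\gamma$. Now fix any $C\in\calC_\gamma$. Since $F$ is stationary, pick $\alpha\in\lim(C)\cap F$; coherence gives $C\cap\alpha=C_{\alpha,i}$ for some $i$. Since $\alpha\in F$, $S_{\alpha,i}$ reflects at $\gamma$, so pick $\beta\in\lim(C)\cap S_{\alpha,i}$. But $\beta\in\lim(C)$ forces $C\cap\beta=C_{\beta,k}$ for some $k$, whence $C_{\beta,k}\cap\alpha=C_{\alpha,i}$, contradicting $\beta\in S_{\alpha,i}$. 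Note that $F$ being merely stationary is exactly enough: it only needs to meet the club $\lim(C)$ once.
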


\begin{proof}
  Suppose for sake of contradiction that $\vec{\calC} = \langle \calC_\alpha
  \mid \alpha \in \lim(\lambda) \rangle$ is a $\square(\lambda,
  {<}\kappa)$-sequence. For each $\alpha \in \lim(\lambda)$, let
  $\calC_\alpha = \{C_{\alpha, i} \mid i < j_\alpha\}$, where $j_\alpha < \kappa$.
  For each limit $\alpha \in \lim(\lambda)$ and each $i < j_\alpha$, let
  \[
    S_{\alpha, i} = \{\beta \in S \setminus (\alpha + 1) \mid \text{for all }
    k < j_\beta, ~ C_{\beta, k} \cap \alpha \neq C_{\alpha, i}\}.
  \]
  \begin{claim}
    There is $\alpha_0 < \lambda$ such that $S_{\alpha, i}$ is stationary
    for all limit $\alpha$ with $\alpha_0 \leq \alpha < \lambda$ and all $i < j_\alpha$.
  \end{claim}

  \begin{proof}
    Otherwise, there would be an unbounded subset $A \subseteq
    \lim(\lambda)$ such that, for all $\alpha \in A$, there is $i_\alpha < j_\alpha$
    such that $S_{\alpha, i_\alpha}$ is nonstationary. For each $\alpha \in A$,
    let $D_\alpha$ be a club in $\lambda$ such that $D_\alpha \cap S_{\alpha,
    i_\alpha} = \emptyset$. Define an ordering $<_T$ on $A$ as follows.
    For all $\alpha, \beta \in A$, set $\alpha <_T \beta$ if and only if
    $\alpha < \beta$ and $C_{\beta, i_\beta} \cap \alpha = C_{\alpha, i_\alpha}$.
    It is easily verified that $T = (A, <_T)$ is a tree.

    We claim that $T$ has no antichains of size $\kappa$. To this end, fix a set
    $B \in [A]^\kappa$. We will find two elements of $B$ that are
    $<_T$-comparable. Fix a $\gamma \in S \cap \bigcap_{\alpha \in B} D_\alpha$ with $\gamma>\sup(B)$.
    Then, for all $\alpha \in B$, we have $\gamma \notin S_{\alpha, i_\alpha}$,
    so there is $k_\alpha < j_\gamma$ such that $C_{\gamma, k_\alpha} \cap \alpha
    = C_{\alpha, i_\alpha}$. Since $j_\gamma < \kappa$, we can find
    $\alpha < \beta$ in $B$ such that $k_\alpha = k_\beta$. But then
    $C_{\beta, i_\beta} \cap \alpha = C_{\gamma, k_\beta} \cap \alpha =
    C_{\alpha, i_\alpha}$, so $\alpha <_T \beta$.

    The tree $T$ therefore has height $\lambda$ and all of its levels have size
    less than $\kappa$. Since $\kappa < \lambda$, it then
    follows from a result of Kurepa \cite{Kurepa}
    that $T$ has a cofinal branch. Let $A^*$ be such a cofinal branch. Then
    $A^*$ is cofinal in $\lambda$ and, for all $\alpha < \beta$ in $A^*$,
    we have $C_{\alpha, i_\alpha} = C_{\beta, i_\beta} \cap \alpha$. Therefore,
    $\bigcup_{\alpha \in A^*} C_{\alpha, i_\alpha}$ is a thread through
    $\vec{\calC}$, contradicting the assumption that $\vec{\calC}$ is a
    $\square(\lambda, {<}\kappa)$-sequence.
  \end{proof}

  Fix $\alpha_0 < \lambda$ as in the claim. By $\sDSR({<}\kappa, S)$, we can
  find a $\gamma \in S^\lambda_{>\omega} \setminus (\alpha_0 + 1)$ and a
  stationary $F \subseteq \gamma \setminus \alpha_0$ such that, for all
  $\alpha \in F$ and all $i < j_\alpha$, $S_{\alpha, i}$ reflects at
  $\gamma$. Fix an arbitrary $C \in \calC_\gamma$, and find an
  $\alpha \in \lim(C) \cap F$. Since $\alpha \in \lim(C)$, it follows that
  there is $i < j_\alpha$ such that $C \cap \alpha = C_{\alpha, i}$.
  Since $\alpha \in F$, we know that $S_{\alpha, i} \cap \gamma$ is stationary
  in $\gamma$, so we can find $\beta \in \lim(C) \cap S_{\alpha, i}$.
  Then $\beta > \alpha$ and, since $\beta \in \lim(C)$, there is $k < j_\beta$
  such that $C_{\beta, k} = C \cap \beta$. But then $C_{\beta, k} \cap \alpha
  = C \cap \alpha = C_{\alpha, i}$, contradicting the fact that $\beta \in
  S_{\alpha, i}$ and finishing the proof of the theorem.
\end{proof}

The diagonal stationary reflection principles we are dealing with here are closely related to Paul Larson's principle $\OSR_{\omega_2}$ from \cite{Larson:SeparatingSRP}. Here is a slight reformulation and generalization of the original principle.

\begin{definition}
\label{def:OSR}
Suppose that $\lambda \geq \omega_2$ is a regular cardinal and $S \subseteq \lambda$
is stationary. $\OSR(S)$ is the assertion that, for every sequence $\langle S_\alpha \st i<\lambda\rangle$ of stationary subsets of $S$, there is a $\delta\in S^{\lambda}_{>\omega}$ such that for all $\alpha<\delta$, $S_\alpha$ reflects at $\delta$.
\end{definition}

Larson wrote $\OSR_{\omega_2}$ for the principle $\OSR(S^{\omega_2}_\omega)$. The following lemma shows among other things that, for stationary $S \subseteq \lambda$,
the principle $\OSR(S)$ is equivalent to each of the principles $\DSR({<}\lambda,S)$, $\uDSR({<}\lambda,S)$ and $\sDSR({<}\lambda,S)$.
Thus, differences between the principles $\DSR({<}\kappa,S)$, $\uDSR({<}\kappa,S)$ and $\sDSR({<}\kappa,S)$ can only be observed when $\sup(S)>\kappa$.

\begin{lemma}
\label{equivalence_lemma}
  Suppose that $\lambda \geq \aleph_2$ is a regular cardinal and
  $S \subseteq \lambda$ is stationary. Then the following are equivalent.
  \begin{enumerate}
    \item $\uDSR({<}\lambda, S)$.
    \item $\DSR({<}\lambda, S)$.
    \item $\OSR(S)$.
    \item For every matrix $\langle S_{\alpha, i} \mid \alpha < \lambda, ~
    i < j_\alpha \rangle$ of stationary subsets of $S$, where $j_\alpha < \lambda$
    for all $\alpha < \lambda$, there is $\gamma \in
    S^\lambda_{>\omega}$ such that $S_{\alpha, i}$ reflects at
    $\gamma$ for all $\alpha < \gamma$ and $i < j_\alpha$.
  \end{enumerate}
\end{lemma}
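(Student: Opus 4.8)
The plan is to establish the cycle $(2)\Rightarrow(1)\Rightarrow(4)\Rightarrow(2)$, which yields the equivalence of (1), (2), and (4), and then to close the loop with (3) via the trivial $(4)\Rightarrow(3)$ together with the more substantial $(3)\Rightarrow(4)$. Three of these implications are essentially immediate. $(2)\Rightarrow(1)$ is the instance $\kappa=\lambda$ of the implication chain $\DSR\Rightarrow\sDSR\Rightarrow\uDSR$ recorded in the introduction. $(4)\Rightarrow(3)$ is the special case of (4) in which every $j_\alpha=1$. For $(4)\Rightarrow(2)$, given a matrix as in the hypothesis of $\DSR({<}\lambda,S)$, one applies (4) to obtain $\gamma\in S^\lambda_{>\omega}$ at which every $S_{\alpha,i}$ with $\alpha<\gamma$ reflects, and then takes the required club $F\subseteq\gamma$ to be $\gamma$ itself (a club in $\gamma$, since $\gamma$ has uncountable cofinality).

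The first genuinely nontrivial step is $(1)\Rightarrow(4)$. Given a matrix $\langle S_{\alpha,i}\mid\alpha<\lambda,\ i<j_\alpha\rangle$ of stationary subsets of $S$ with each $j_\alpha<\lambda$ (discarding empty rows, we may assume each $j_\alpha\geq 1$), I would pass to the cumulative matrix: for $\beta<\lambda$, the family $\mathcal{S}_\beta=\{S_{\alpha,i}\mid\alpha\leq\beta,\ i<j_\alpha\}$ has size less than $\lambda$, because $\lambda$ is regular and each $j_\alpha<\lambda$, so I can enumerate it as $\{S'_{\beta,k}\mid k<j'_\beta\}$ with $j'_\beta<\lambda$. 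Applying $\uDSR({<}\lambda,S)$ to $\langle S'_{\beta,k}\mid\beta<\lambda,\ k<j'_\beta\rangle$ produces a $\gamma$ of uncountable cofinality and an unbounded $F\subseteq\gamma$ such that $S'_{\beta,k}$ reflects at $\gamma$ for all $\beta\in F$ and $k<j'_\beta$. Then, for any $\alpha<\gamma$ and $i<j_\alpha$, choosing $\beta\in F$ with $\beta\geq\alpha$ places $S_{\alpha,i}$ into $\mathcal{S}_\beta$, so $S_{\alpha,i}$ reflects at $\gamma$, and hence $\gamma$ witnesses (4). The point of the cumulative enumeration is that it upgrades the ``unbounded $F$'' conclusion of $\uDSR$ to ``all $\alpha<\gamma$'', and it does so regardless of how large the individual $j_\alpha$ are below $\lambda$.

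The remaining implication $(3)\Rightarrow(4)$ is the main obstacle, since $\OSR(S)$ only speaks of a single sequence, so an entire matrix must first be compressed into one. I would fix a bijection $\pi\colon\lambda\to\lambda\times\lambda$, write $\pi(\xi)=(\pi_0(\xi),\pi_1(\xi))$ for $\xi<\lambda$, and set $U_\xi=S_{\pi_0(\xi),\pi_1(\xi)}$ when $\pi_1(\xi)<j_{\pi_0(\xi)}$ and $U_\xi=S$ otherwise. The catch is that for a prospective reflection point $\delta$ one needs $\pi^{-1}(\alpha,i)<\delta$ for all $\alpha<\delta$ and $i<j_\alpha$, which can fail badly when some $j_\alpha$ is large unless $\delta$ is chosen with care. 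To force this, I would intersect everything with the club $D$ consisting of the limit-ordinal closure points of the map $\delta\mapsto\sup\{\pi^{-1}(\alpha,i)+1\mid\alpha<\delta,\ i<j_\alpha\}$, which takes values below $\lambda$ by regularity, and then apply $\OSR(S)$ to $\langle U_\xi\cap D\mid\xi<\lambda\rangle$. Reflection of, say, $U_0\cap D$ at the resulting $\delta\in S^\lambda_{>\omega}$ forces $D\cap\delta$ to be unbounded in $\delta$, hence (as $D$ is closed) $\delta\in D$; then for every $\alpha<\delta$ and $i<j_\alpha$ we have $\pi^{-1}(\alpha,i)<\delta$, so $S_{\alpha,i}\cap D=U_{\pi^{-1}(\alpha,i)}\cap D$ reflects at $\delta$, whence so does $S_{\alpha,i}$. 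I expect the only delicate points to be the routine verification that $D$ contains a club and the bookkeeping that forces $\delta$ into $D$; the rest is mechanical.
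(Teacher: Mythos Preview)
Your proof is correct and follows essentially the same route as the paper. The only cosmetic differences are in the bookkeeping: the paper proves $(1)\Rightarrow(3)$ (with the matrix $T_{\alpha,i}=S_i$ for $i<\alpha$) rather than your direct $(1)\Rightarrow(4)$ via the cumulative matrix, though the underlying idea is identical; and in $(3)\Rightarrow(4)$ the paper uses a bijection $\pi:\lambda\to\bigcup_{\alpha<\lambda}(\{\alpha\}\times j_\alpha)$ and then invokes the earlier Remark (that \OSR yields stationarily many reflection points) to choose $\gamma$ inside the closure club, whereas you intersect each $U_\xi$ with the club $D$ before applying \OSR to force $\delta\in D$---both standard tricks accomplishing the same thing.
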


\begin{proof}
  It is immediate that clause (4) implies clauses (1), (2), and (3), and that
  clause (2) implies clause (1). To see that clause (3) implies clause (4), suppose
  that $\OSR(S)$ holds and that we are given a matrix $\langle S_{\alpha, i}
  \mid \alpha < \lambda, ~ i < j_\alpha \rangle$ as in the statement of clause
  (4). Let $\pi : \lambda \rightarrow \bigcup_{\alpha < \lambda}\left(\{\alpha\} \times
  j_\alpha\right)$ be a bijection. Form a sequence $\langle T_\alpha \mid \alpha < \lambda \rangle$
  of stationary subsets of $S$ by letting $T_\alpha = S_{\pi(\alpha)}$ for all
  $\alpha < \lambda$. Let $C$ be the set of $\gamma < \lambda$ such that
  $\pi``\gamma = \bigcup_{\alpha < \gamma} (\{\alpha\} \times j_\alpha)$. Then
  $C$ is a club in $\lambda$. Using $\OSR(S)$, we can find $\gamma \in
  C \cap S^\lambda_{>\omega}$ such that $T_\alpha$ reflects at
  $\gamma$ for all $\alpha < \gamma$. By our definition of $T_\alpha$ and
  our choice of $\gamma$, it follows that $S_{\beta, i}$ reflects at
  $\gamma$ for all $\beta < \gamma$ and $i < j_\beta$, so
  $\gamma$ witnesses this instance of clause (4).

  It remains to argue that clause (1) implies
  clause (3). To this end, suppose that $\uDSR({<}\lambda, S)$ holds, and let
  $\langle S_\alpha \mid \alpha < \lambda \rangle$ be a sequence of stationary
  subsets of $S$. Define a matrix
  $\langle T_{\alpha, i} \mid \alpha < \lambda, ~ i < \alpha \rangle$ of stationary
  subsets of $S$ by letting $T_{\alpha, i} = S_i$ for all
  $\alpha < \lambda$ and $i < \alpha$. By $\uDSR({<}\lambda, S)$, we can find an
  ordinal $\gamma \in S^\lambda_{>\omega}$ and an unbounded subset
  $F \subseteq \gamma$ such that, for all $\alpha \in F$ and all $i < \alpha$,
  we have that $T_{\alpha, i}$ reflects at $\gamma$.

  We claim that $S_\beta$ reflects at $\gamma$ for all
  $\beta < \gamma$. To see this, fix $\beta < \gamma$, and let
  $\alpha = \min(F \setminus (\beta + 1))$. Then $T_{\alpha, \beta} = S_\beta$. But, since
  $\alpha \in F$, we know that $T_{\alpha, \beta}$ reflects at
  $\gamma$. Therefore, $\gamma$ witnesses this instance of $\OSR(S)$.
\end{proof}

We will now explore the relationships between diagonal and simultaneous reflection. Let $\lambda$ be a regular uncountable cardinal. As noted in the introduction,
$\uDSR({<}\kappa,\lambda)$ implies $\refl({<}\kappa,\lambda)$, because given any sequence $\langle S_i\st i<\bar{\kappa} \rangle$,  where $\bar{\kappa}<\kappa$, one can consider the matrix defined by setting $S_{\alpha,i}=S_i$ for all $\alpha<\lambda$ and $i<\bar{\kappa}$. It is a more interesting question whether diagonal stationary reflection implies any amount of simultaneous reflection when this is not explicitly built into the principle at hand. The following lemma provides one instance in which this is the case.

\begin{lemma}
\label{dsr_implies_simultaneous}
Let $\lambda$ be a regular uncountable cardinal, and let $S\subseteq\lambda$ be stationary. Then $\DSR(1,S)$ implies $\refl(\omega,S)$.
\end{lemma}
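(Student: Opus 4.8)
The plan is to show that $\DSR(1,S)$ gives us not just reflection of single stationary subsets of $S$, but simultaneous reflection of finitely many. So fix stationary sets $T_0,\dots,T_{n-1}\subseteq S$; we want a single $\gamma<\lambda$ of uncountable cofinality at which all of them reflect. The natural idea is to encode this finite family into a matrix that $\DSR(1,S)$ can act on. Since $\DSR(1,S)$ asks for a club $F\subseteq\gamma$ with $S_\alpha$ reflecting at $\gamma$ for every $\alpha\in F$, and since a club in $\gamma$ (with $\cf(\gamma)>\omega$) meets every ``tail residue class'' modulo $n$ cofinally — indeed meets the set $\{\alpha<\gamma : \alpha\equiv k \pmod n \text{ in the sense of the }k\text{-th block}\}$ — I would set up the one-dimensional matrix $\langle S_\alpha : \alpha<\lambda\rangle$ so that its value cycles through $T_0,\dots,T_{n-1}$ in a way that every club is guaranteed to hit every $T_k$.

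Concretely, I would define $S_\alpha$ by cases on $\alpha$: partition $\lambda$ into $n$ pieces $A_0,\dots,A_{n-1}$, each of which is stationary (or at least unbounded and ``club-catching'') — for instance, using a partition where $A_k$ contains, for each limit $\delta$, an unbounded subset of $\delta$, or simply taking $A_k = \{\alpha<\lambda : \alpha = \delta + m,\ \delta\text{ limit or }0,\ m\equiv k\pmod n\}$, which has the property that every club $F$ in any $\gamma$ of uncountable cofinality meets each $A_k$ on an unbounded set. Then set $S_\alpha = T_k$ whenever $\alpha\in A_k$. Apply $\DSR(1,S)$ to the matrix $\langle S_\alpha : \alpha<\lambda, i<1\rangle$: we obtain $\gamma$ with $\cf(\gamma)>\omega$ and a club $F\subseteq\gamma$ such that $S_\alpha$ reflects at $\gamma$ for all $\alpha\in F$. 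For each $k<n$, pick any $\alpha\in F\cap A_k$ (nonempty since $F\cap A_k$ is unbounded in $\gamma$); then $S_\alpha = T_k$ reflects at $\gamma$. Hence $\{T_0,\dots,T_{n-1}\}$ reflects simultaneously at $\gamma$, which is exactly $\refl(\omega,S)$.

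The one point that needs care — and which I expect to be the only real obstacle — is arranging the partition $\langle A_k : k<n\rangle$ so that $F\cap A_k$ is provably nonempty (better, unbounded in $\gamma$) for the club $F\subseteq\gamma$ that $\DSR(1,S)$ hands us, \emph{without} knowing $\gamma$ in advance. The successor-ordinal partition $A_k=\{\delta+m : \delta\in\{0\}\cup\lim(\lambda),\ m<\omega,\ m\equiv k\pmod n\}$ works: for any $\gamma$ with $\cf(\gamma)>\omega$ and any club $F\subseteq\gamma$, and any $\xi<\gamma$, choose $\delta\in F\cap\acc(F)$ above $\xi$ with $\delta$ a limit ordinal (such $\delta$ exist, as $\acc(F)$ is club in $\gamma$ and $\cf(\gamma)>\omega$ forces $F$ to contain limit ordinals cofinally); since $\delta\in\acc(F)$, there are points of $F$ in the interval $[\delta,\delta+\omega)$ — wait, that is not automatic, so instead I would simply pick $\delta\in F$ a limit ordinal above $\xi$ and note $\delta\in A_0$; to catch $A_k$ for $k>0$ I adjust the partition to $A_k=\{\alpha<\lambda : \alpha\in\lim(\lambda),\ \otp(\alpha\cap\lim(\lambda))\equiv k\pmod n\}$ or, cleaner, just observe that $F$ restricted to limit ordinals is still club in $\gamma$ and has order type $\cf(\gamma)$, so enumerating it and taking residue classes of the index modulo $n$ yields $n$ sets each unbounded in $\gamma$; defining $S_\alpha$ via the corresponding residue class of $\otp(F\cap\alpha)$ doesn't work since $F$ is not known in advance, so the correct move is: let $A_k=\{\alpha\in\lim(\lambda) : \otp(\lim(\lambda)\cap\alpha)\equiv k\pmod n\}$, which is a definable partition of $\lim(\lambda)$, and check that for $\gamma$ of uncountable cofinality every club $F\subseteq\gamma$ has $\otp(\lim(\lambda)\cap\alpha)$ ranging cofinally through all residues as $\alpha$ ranges over the limit points of $F$ — this holds because the limit points of $F$ form a club in $\gamma$ and the map $\alpha\mapsto\otp(\lim(\lambda)\cap\alpha)$ is continuous and increasing with unbounded range in $\otp(\lim(\lambda)\cap\gamma)$, an ordinal of uncountable cofinality. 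With this bookkeeping pinned down the rest is routine. (An alternative that sidesteps the partition entirely: iterate $\DSR(1,S)$ using the equivalence machinery, but the direct matrix argument above is cleaner.)
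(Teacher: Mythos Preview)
You have misread the statement: $\refl(\omega,S)$ abbreviates $\refl({<}\omega_1,S)$, i.e., simultaneous reflection of every \emph{countable} family of stationary subsets of $S$, not merely every finite family. Your entire argument treats only finitely many sets $T_0,\dots,T_{n-1}$, so at best it would establish $\refl({<}\omega,S)$, which is strictly weaker than what the lemma asserts. The paper's proof accordingly begins with a sequence $\langle T_n \mid n<\omega\rangle$.

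Even restricting to the finite case, your partition argument has a gap. You want pieces $A_0,\dots,A_{n-1}$ such that \emph{every} club $F$ in \emph{every} $\gamma$ of uncountable cofinality meets each $A_k$; this is the same as requiring each $A_k$ to be stationary below every such $\gamma$. Your final candidate $A_k=\{\alpha\in\lim(\lambda):\otp(\lim(\lambda)\cap\alpha)\equiv k\pmod n\}$ does not have this property: if $F$ is the club of limits of limit ordinals below $\gamma$, then every $\alpha\in F$ has $\otp(\lim(\lambda)\cap\alpha)$ a limit ordinal, hence residue $0$, so $F\cap A_k=\emptyset$ for $k\neq 0$. ``Unbounded range'' of a continuous increasing function is not enough to hit all residues; the range can easily be a club of limit ordinals. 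More fundamentally, your scheme cannot be adapted to the countable case: there is no partition of $\lambda$ into countably many pieces each of which is guaranteed to meet every club below every $\gamma$ of uncountable cofinality.

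The paper's proof takes a genuinely different route that avoids this obstacle. Rather than assigning $S_\alpha$ by the residue of $\alpha$, it partitions each $T_n$ into $\omega$ stationary pieces $T_{n,l}$, defines $S_\alpha = T_{s(n,l)}$ when $\alpha\in T_{n,l}$ (for a suitable successor function $s$ on $\omega\times\omega$), and then uses a \emph{chasing argument}: since $F$ is club and $S_\alpha=T_{s(f(\alpha))}$ reflects at $\gamma$, the stationary set $T_{s(f(\alpha))}\cap\gamma$ must meet $F$, producing $\beta\in F$ with $f(\beta)=s(f(\alpha))$. Iterating, one reaches any prescribed first coordinate $m$ in finitely many steps, so every $T_m$ reflects at $\gamma$. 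The key is that membership in the pieces is witnessed by stationarity (hence by intersection with the club $F$), not by an arithmetic property of $\alpha$ that a club could uniformly avoid.
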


\begin{proof}
  Assume that $\DSR(1, S)$ holds, and
let $\langle T_n \st n<\omega\rangle$ be a sequence of
stationary subsets of $S$. We will find an ordinal $\delta \in S^\lambda_{>\omega}$
such that, for all $n < \omega$, $T_n$ reflects at $\delta$.
By shrinking the sets if necessary, we may assume that $\langle T_n \mid n < \omega \rangle$ is a
sequence of pairwise disjoint sets. For each $n<\omega$, let
$T_n=\dot{\bigcup}_{l<\omega}T_{n,l}$ be a partition of $T_n$ into stationary sets. Define a function $s:\omega\times\omega\rightarrow\omega\times\omega$ by setting
\[s(n,l)=
\left\{
\begin{array}{l@{\qquad}l}
\langle 0,l+1 \rangle & \text{if $n\ge l$,}\\
\langle n+1,l \rangle & \text{if $n<l$.}
\end{array}
\right.
\]
Notice that, denoting the $p$-fold application of $s$ by $s^p$ (for $p<\omega$),
$s$ is defined in such a way that the following holds.
\begin{claim}
\label{claim:EventuallyEverythingIsReached}
For any $\langle{n,l}\rangle\in\omega\times\omega$ and any $m<\omega$, there is a $p\in\omega$ such that the first component of $s^p(n,l)$ is $m$.
\end{claim}
Now define a function $f:\lambda\rightarrow\omega\times\omega$ by
\[f(\alpha)=
\left\{
\begin{array}{l@{\qquad}l}
s(n,l) & \text{if $\alpha\in T_{n,l}$},\\
\langle 0,0\rangle & \text{if $\alpha\notin\bigcup_{n,l<\omega}T_{n,l}$}.
\end{array}
\right.\]
Note that $\DSR(1,S)$ is equivalent to the assertion that, given any sequence $\langle S_\alpha\st\alpha<\lambda\rangle$ of stationary subsets of $S$, there are $\gamma$, $F$ such that
\begin{enumerate}
\item[$(*)$] $\gamma\in S^\lambda_{{>}\omega}$, $F\subseteq\gamma$ is club, and for all $\alpha\in F$, $S_\alpha$ reflects at $\gamma$.
\end{enumerate}
Define
\[ S_\alpha=T_{f(\alpha)}\]
for $\alpha<\lambda$, and let $\gamma$ and $F$ satisfy $(*)$ with respect to
$\langle S_\alpha \mid \alpha < \lambda \rangle$.
\begin{claim}
\label{claim:FindingBeta}
Let $\alpha\in F$. Then there is a $\beta\in F$ with $f(\beta)=s(f(\alpha))$.
\end{claim}

\begin{proof}
Since $\alpha\in F$, $S_\alpha\cap\gamma$ is stationary in $\gamma$, by $(*)$. By definition, $S_\alpha=T_{f(\alpha)}$, so $T_{f(\alpha)}\cap\gamma$ is stationary
in $\gamma$. Since $F$ is club in $\gamma$, there is a $\beta\in T_{f(\alpha)}\cap F$. By definition, then, $f(\beta)=s(f(\alpha))$, so $\beta$ is as wished.
\end{proof}

Now let $m<\omega$. We will show that $T_m$ reflects at $\gamma$. To see this, pick $\alpha\in F$. Using Claim \ref{claim:EventuallyEverythingIsReached}, let $p\in\omega$ be such that the first component of $s^p(f(\alpha))$ is $m$, say $s^p(f(\alpha))=\langle{m,l}\rangle$.
By applying Claim \ref{claim:FindingBeta} $p$ times, we see that there is a $\beta\in F$ such that $f(\beta)=s^p(f(\alpha))$. By $(*)$, $S_\beta\cap\gamma$ is stationary in $\gamma$. But $S_\beta=T_{m,l}\subseteq T_m$, so $T_m$ reflects at $\gamma$, as wished.
\end{proof}

The previous lemma is optimal, in the sense that $\DSR$ cannot be replaced with $\sDSR$, by Theorem \ref{stationary_simultaneous_thm}, and in the sense that $\DSR(1,S)/\refl(\omega,S)$ cannot be replaced by $\DSR(\mu,S)/\refl(\mu^+,S)$ for any uncountable $\mu$,
by Theorem \ref{DSR_simultaneous_thm}.
We doubt that other instances of this phenomenon are possible, but at present have no model
in which, for example, $\DSR(1,S^\lambda_\omega)$ holds but
$\refl(\omega_1,S^\lambda_\omega)$ fails.

\section{Separations}
\label{sec:Separation}

In this section, we prove results separating a number of stationary reflection
principles. Our standard strategy to show that a reflection principle $P$ does
not imply a reflection principle $P^*$ will be as follows.
We start in a model $V$ in which an indestructible version of $P$, or
some strengthening of $P$, holds and then
produce a forcing extension $V_1$ in which a counterexample to $P^*$ is added.
This model $V_1$ will be our desired model. To show that $P$ holds in $V_1$,
we construct a further forcing extension $V_2$ such that the forcing leading
from $V$ to $V_2$ preserves (the strengthening of) $P$. In the last step, we
show that we can transfer $P$ down from $V_2$ to $V_1$.

For simplicity and concreteness, we will primarily be considering reflection
principles of the form
$\OSR(S)$, $\DSR({<\kappa}, S)$, etc.\ in which $S = S^\lambda_\omega$ for some
regular cardinal $\lambda \geq \omega_2$. Our reasons for doing this are twofold.
First, these are the reflection principles that are implied by forcing axioms
such as $\mathsf{MM}$ and $\mathsf{SCFA}$, and therefore we feel they are of
the most interest. Second, doing so will simplify matters at various points
due to the fact that the stationarity of subsets of $S^\lambda_\omega$ is always
preserved by countably closed forcing. It is not in general true that the
stationarity of
subsets of $S^\lambda_\kappa$ for uncountable $\kappa$ is preserved by
$\kappa^+$-closed forcing; to arrange for this, one would have to consider issues
of approachability that we would for the most part rather avoid here.
Nonetheless, with a bit more
care and attention to approachability, the reader can adapt our proofs to apply
to situations in which $S$ is of the form $S^\lambda_{\leq \kappa}$ for certain
uncountable $\kappa$.

\subsection{Indestructible reflection principles} \label{indestructible_subsection}

In this subsection, we indicate how to arrange for the indestructible reflection
principles that will appear as hypotheses in later theorems. We first deal with
the reflection principles at inaccessible cardinals. First note that, if
$\lambda$ is weakly compact, then $\OSR(\lambda)$ holds. By standard arguments (see,
e.g., \cite[Example 16.2]{cummings_iterated_forcing}), if $\lambda$ is a weakly
compact cardinal, then there is a forcing extension in which $\lambda$ remains
weakly compact and its weak compactness is indestructible under $\lambda$-directed
closed forcing of size $\lambda$. In particular, in this forcing extension,
$\lambda$ is inaccessible and $\OSR^*(\lambda)$ holds (and hence, by
Lemma \ref{equivalence_lemma}, $\DSR^*({<}\lambda, \lambda)$ holds as well).

We now consider indestructible reflection principles at successors of regular
cardinals.

\begin{theorem} \label{thm:weakly_compact_osr}
  Suppose that $\mu$ is a regular uncountable cardinal and $\lambda > \mu$ is
  a weakly compact cardinal. Then there is a $\mu$-closed, $\lambda$-cc forcing
  extension in which $\lambda = \mu^+$ and $\OSR^*(S^{\mu^+}_{<\mu})$ holds.
\end{theorem}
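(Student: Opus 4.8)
The plan is to collapse $\lambda$ to become $\mu^+$ using the Levy collapse $\mathrm{Coll}(\mu, {<}\lambda)$, which is $\mu$-closed and $\lambda$-cc, and then to verify that $\OSR^*(S^{\mu^+}_{<\mu})$ holds in the extension. Following the paradigm sketched just before the statement (and for reflection at inaccessibles), the efficient way to get the \emph{indestructible} version $\OSR^*$ is to first make the weak compactness of $\lambda$ indestructible under $\lambda$-directed closed forcing of size $\lambda$ (as recalled in the excerpt, citing \cite[Example 16.2]{cummings_iterated_forcing}), and then note that $\mu$-closed forcing of the form we use, followed by any $\lambda$-directed closed forcing of size $\lambda = \mu^+$ in the extension, can be absorbed into a single $\lambda$-directed closed forcing of size $\lambda$ applied to the original model. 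So the first step is: pass to a model $V$ in which $\lambda$ is weakly compact and this weak compactness is indestructible under $\lambda$-directed closed forcing of size $\lambda$; the desired forcing extension will be $V[G]$ where $G$ is generic for $\bb{P} = \mathrm{Coll}(\mu, {<}\lambda)$.

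The heart of the argument is the following: let $G * H$ be generic over $V$, where $H$ is generic over $V[G]$ for some $\lambda$-directed closed $\dot{\bb{Q}}$ of size $\le \lambda$, and let $\langle \dot{S}_\alpha \mid \alpha < \lambda \rangle \in V[G][H]$ be a sequence of stationary subsets of $S^{\mu^+}_{<\mu}$; we must find $\delta \in S^{\mu^+}_{>\omega}$ such that every $\dot{S}_\alpha$ reflects at $\delta$. The key observation is that $\bb{P} * \dot{\bb{Q}}$ is, up to a dense embedding or at least up to isomorphism of a dense subset, of the form $\mathrm{Coll}(\mu, {<}\lambda) * \dot{\bb{Q}}$ where the whole two-step iteration is $\lambda$-directed closed of size $\lambda$ — more precisely, $\bb{P}$ itself is $\lambda$-cc of size $\lambda$, and while it is only $\mu$-closed, the standard trick is that $\mathrm{Coll}(\mu,{<}\lambda)$ is isomorphic to $\mathrm{Coll}(\mu, {<}\lambda) \times \mathrm{Coll}(\mu, {<}\lambda)$ on a tail, or rather one uses that the relevant name $\langle \dot{S}_\alpha \rangle$ depends only on a small piece. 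Then I apply weak compactness in the form: if $j : M \to N$ is an elementary embedding with $\crit(j) = \lambda$, $M$ a transitive model of a large enough fragment of ZFC containing $\bb{P}*\dot\bb{Q}$ and the name for the sequence, with ${}^{<\lambda}M \subseteq M$, and if one can lift $j$ to $\hat{j} : M[G][H] \to N[\hat G][\hat H]$, then in $N[\hat G][\hat H]$ the ordinal $\lambda$ itself is an element of $S^{j(\mu^+)}_{>\omega}$ (since $\cf^{N[\hat G][\hat H]}(\lambda) = \lambda > \omega$), and $\hat j(\dot S_\alpha^{G*H}) \cap \lambda = \dot S_\alpha^{G*H}$ is stationary in $\lambda$ for each $\alpha < \lambda$; hence by elementarity, back in $M[G][H]$ there is such a $\delta < j(\mu^+)$ reflecting all the $\dot S_\alpha$, and $M[G][H]$ computes stationarity in $S^{\mu^+}_{<\mu}$ correctly because ${}^{<\lambda}M[G][H] \subseteq M[G][H]$ is preserved (here is where $\bb{P}$ being $\mu$-closed, $\lambda$-cc, and $\dot{\bb{Q}}$ being $\lambda$-directed closed of size $\le\lambda$ all get used, together with closure of $M$).

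The main obstacle — and the step that needs genuine care — is the lifting of the embedding $j$ through the collapse $\bb{P} = \mathrm{Coll}(\mu, {<}\lambda)$ and then through $\dot\bb{Q}$. For the collapse one uses the standard master-condition/diagonalization argument: $j``G$ generates a filter on $j(\bb{P})$, and below the trivial condition on the "new" coordinates of $j(\bb{P}) \cong \mathrm{Coll}(\mu, {<}j(\lambda))$ there is a generic $\hat G$ over $N$ extending it, using that $j(\bb{P})/G$ is sufficiently closed in $N[G]$ and that $N$ (being closed under ${<}\lambda$-sequences) sees enough of $G$; this is exactly the reasoning behind the indestructibility of weak compactness under $\lambda$-directed closed forcing of size $\lambda$, which is why it is cleanest to invoke that indestructibility as a black box rather than re-prove it. For $\dot\bb{Q}$: since $\dot\bb{Q}$ is $\lambda$-directed closed of size $\le \lambda$ in $V[G]$ and $H$ is generic for it, $j``H$ is a directed subset of $j(\dot\bb{Q})^{\hat G}$ of size $\le\lambda < j(\lambda)$, so it has a lower bound (a master condition) in $N[\hat G]$, below which we build $\hat H$ generic over $N[\hat G]$; this uses that $N[\hat G]$ has only $\le\lambda$-many dense subsets of $j(\dot\bb{Q})^{\hat G}$ to meet in the relevant sense, which follows from the cc and closure bookkeeping. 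Once the lift $\hat j : M[G][H] \to N[\hat G][\hat H]$ is in hand, the reflection conclusion is immediate as above; and a final one-line remark handles $\OSR^*$ proper by noting the argument was carried out for an arbitrary $\lambda$-directed closed $\dot\bb{Q}$ of size $\le\lambda$, so $\OSR(S^{\mu^+}_{<\mu})$ persists to all such extensions. By Lemma \ref{equivalence_lemma}, this also yields the corresponding $\DSR^*$, $\uDSR^*$, $\sDSR^*$ for $S^{\mu^+}_{<\mu}$, though that is not needed for the statement as given.
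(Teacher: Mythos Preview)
Your overall strategy --- force with $\bb{P} = \mathrm{Coll}(\mu,{<}\lambda)$ and lift a weak compactness embedding $j:M\to N$ through $\bb{P}*\dot{\bb{Q}}$ --- is exactly the approach of \cite[Thm.~3.22]{Hayut-LambieHanson:SimultaneousReflectionAndSquare} that the paper invokes. However, the write-up contains two real problems.

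First, the detour through indestructible weak compactness is misguided. You claim that ``$\mu$-closed forcing of the form we use, followed by any $\lambda$-directed closed forcing of size $\lambda$, can be absorbed into a single $\lambda$-directed closed forcing of size $\lambda$.'' This is false: $\mathrm{Coll}(\mu,{<}\lambda)$ is only $\mu$-closed, so $\bb{P}*\dot{\bb{Q}}$ is not $\lambda$-directed closed, and indestructibility of weak compactness under $\lambda$-directed closed forcing gives you nothing here. Moreover, any preparatory forcing to obtain indestructibility would itself fail to be $\mu$-closed, so the resulting extension of the original ground model would no longer be $\mu$-closed as the theorem requires. The lifting must be done directly from the hypothesis that $\lambda$ is weakly compact, without preparation; your later paragraphs in fact do this, so the opening paragraph should simply be dropped. (Also, in $N[\hat G][\hat H]$ one has $\cf(\lambda)=\mu$, not $\lambda$; this is harmless since $\mu>\omega$, but it is worth getting right.)

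Second, and more seriously, you do not justify the crucial assertion that $\hat j(S_\alpha)\cap\lambda = S_\alpha$ is stationary in $\lambda$ \emph{in $N[\hat G][\hat H]$}. Your parenthetical about $M[G][H]$ being closed under ${<}\lambda$-sequences only shows that $M[G][H]$ computes stationarity in $\lambda$ correctly relative to $V[G][H]$; it says nothing about the larger model $N[\hat G][\hat H]$, which contains new clubs in $\lambda$. What is needed is that the further forcing used to build $\hat G/G$ and $\hat H$ over $V[G][H]$ is $\mu$-closed, together with the fact that $\mu$-closed forcing preserves the stationarity of subsets of $S^{\mu^+}_{<\mu}$. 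The latter is \emph{not} automatic for $\mu>\omega_1$: it holds precisely because, for regular $\mu$, the set $S^{\mu^+}_{<\mu}$ lies in the approachability ideal $I[\mu^+]$. The paper explicitly singles this out as ``a key component of the proof,'' and it is exactly the step your sketch omits.
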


\begin{proof}
  This follows from a straightforward modification of the proof of
  \cite[Theorem 3.22]{Hayut-LambieHanson:SimultaneousReflectionAndSquare}.
  A key component of the proof is the fact that the stationarity of
  subsets of $S^{\mu^+}_{<\mu}$ is preserved by $\mu$-closed forcing,
  which holds because, for regular $\mu$, the set $S^{\mu^+}_{<\mu}$
  is in the approachability ideal $I[\mu^+]$. For this and other facts
  on the approachability ideal, we direct the reader to
  \cite[\S 3]{eisworth}.
\end{proof}

\begin{remark}
  Note that $\OSR^*(S^{\mu^+}_{{<}\mu})$ is equivalent to each of
  $\DSR^*(\mu,S^{\mu^+}_{{<}\mu})$ and $\uDSR^*(\mu,S^{\mu^+}_{{<}\mu})$ by
  Lemma \ref{equivalence_lemma}. Also, by Theorem \ref{thm:weakly_compact_osr}
  and a result of Magidor \cite{Magidor:ReflectingStationarySets}, the existence
  of a regular uncountable cardinal $\mu$ for which $\OSR^*(S^{\mu^+}_{{<}\mu})$
  holds is equiconsistent over ZFC with the existence of a weakly compact
  cardinal.
\end{remark}

We will now turn to the consistency of very indestructible versions of diagonal reflection
that will include indestructible diagonal reflection at successors of singular cardinals.
We will use the concept of generically supercompact cardinals, due to Cummings and Foreman.

\begin{definition}
\label{def:IndestGenSC}
A cardinal $\kappa$ is \emph{generically supercompact}%
\footnote{Our definition of generic supercompactness differs slightly from what is called generically supercompact by Foreman \cite[Definition 11.2]{Foreman2009:GenericEmbeddings}. The version of indestructible generic supercompactness of Foreman \cite[Definition 11.4]{Foreman2009:GenericEmbeddings} implies ours. It should be pointed out that clause \ref{item:UnneededItem} in our definition follows from \ref{item:CriticalPointKappa}-\ref{item:PointwiseImageInM}, since $\lambda$ is assumed to be regular in $V$. We kept this clause to highlight the similarity to Foreman's versions of these concepts.}%
if $\kappa$ is the successor of a regular cardinal $\mu$ and, for every regular cardinal $\lambda>\kappa$, there is a $\mu$-closed forcing notion $\tilde{\mathbb{R}}$ such that whenever $H$ is $\tilde{\mathbb{R}}$-generic, there is, in $V[H]$, an elementary embedding
\[j:V\prec M\subseteq V[H]\]
with:
\begin{enumerate}[label=(\arabic*)]
  \item
  \label{item:CriticalPointKappa}
  $\crit(j)=\kappa$,
  \item $j(\kappa)>\lambda$,
  \item
  \label{item:PointwiseImageInM}
  $j``\lambda\in M$,
  \item
  \label{item:UnneededItem}
  $\sup(j``\lambda)<j(\lambda)$,
  \item $M\models\cf(\lambda)=\mu$.
\end{enumerate}
A cardinal $\kappa$ is \emph{indestructibly} generically supercompact if whenever $\mathbb{R}$ is a $\kappa$-directed closed notion of forcing and $G$ is $\mathbb{R}$-generic, then $\kappa$ is generically supercompact in $V[G]$.
\end{definition}

It is well-known by \cite{Laver:Indestructibility} that if $\kappa$ is supercompact, then there is a forcing extension in which $\kappa$ is indestructibly supercompact, meaning that $\kappa$ is supercompact and remains so in any further forcing extension obtained by $\kappa$-directed closed forcing. If $\kappa$ is indestructibly supercompact and $\mu<\kappa$ is a regular uncountable cardinal, then in the forcing extension obtained by collapsing $\kappa$ to become the cardinal successor of $\mu$, $\kappa$ is indestructibly generically supercompact in the sense of \cite[Definition 11.4]{Foreman2009:GenericEmbeddings}, which implies our version of indestructible generic supercompactness, as stated in the footnote to Definition \ref{def:IndestGenSC}; see \cite[Remark after Def.~11.4]{Foreman2009:GenericEmbeddings}.

\begin{lemma}
\label{lem:DSRfromGenSC}
Suppose that $\mu$ is an uncountable regular cardinal and $\kappa=\mu^+$ is generically supercompact. Then for every regular cardinal $\lambda>\kappa$, the principle $\DSR({<}\kappa, S^\lambda_\omega)$ holds.
\end{lemma}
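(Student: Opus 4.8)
The plan is to use the generic supercompactness of $\kappa = \mu^+$ directly: given a matrix $\langle S_{\alpha,i} \mid \alpha < \lambda,\ i < j_\alpha\rangle$ of stationary subsets of $S^\lambda_\omega$ with each $j_\alpha < \kappa$, fix the $\mu$-closed forcing $\tilde{\mathbb{R}}$ and an $\tilde{\mathbb{R}}$-generic $H$ over $V$ yielding $j : V \prec M \subseteq V[H]$ with $\crit(j) = \kappa$, $j(\kappa) > \lambda$, $j``\lambda \in M$, and $M \models \cf(\lambda) = \mu$. Set $\gamma = \sup(j``\lambda)$. The first thing to check is that $\gamma$ works as a reflection point \emph{as computed in $M$}, i.e.\ that in $M$, each relevant $j(S)_{\beta,i}$ reflects at $\gamma$; then I need to reflect this statement back down to $V$.

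First I would analyze the image matrix. Since $\crit(j) = \kappa$ and each $j_\alpha < \kappa$, we have $j(\langle S_{\alpha,i}\rangle) = \langle j(S)_{\beta,i} \mid \beta < j(\lambda),\ i < \tilde{j}_\beta\rangle$ where for $\beta = j(\alpha)$ we get $\tilde{j}_\beta = j_\alpha$ and $j(S)_{j(\alpha),i} = j(S_{\alpha,i})$. The key move is to find, for each $\alpha < \lambda$ and each $i < j_\alpha$, a club-in-$\gamma$ worth of points of $j(S_{\alpha,i})$. Here I would use that $j``\lambda \in M$ is (in $M$) an unbounded subset of $\gamma$ of order type $\lambda$, that $\gamma$ has cofinality $\mu > \omega$ in $M$, and that $S_{\alpha,i}$ is stationary in $\lambda$ in $V$: a standard reflection-of-stationarity argument shows that, since $S_{\alpha,i}$ is stationary in $\lambda$, for any club $E \subseteq \gamma$ in $M$ there is $\xi \in S_{\alpha,i}$ with $j(\xi) \in \lim(E) \cap j``\lambda$ and $\cf^M(j(\xi)) = \omega$ (as $j$ is continuous at ordinals of cofinality $\omega$ below $\kappa$, and $\cf(\xi) = \omega$), so that $j(\xi) \in j(S_{\alpha,i}) \cap \gamma$ is a limit point of $E$; hence $j(S_{\alpha,i})$ is stationary in $\gamma$ in $M$. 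Thus in $M$: $\gamma < j(\lambda)$ has uncountable cofinality, and for every $\alpha < \lambda$ and $i < j_\alpha$ — equivalently, for $\beta = j(\alpha) < \gamma$ and $i < \tilde{j}_\beta$ — $j(S)_{\beta,i}$ reflects at $\gamma$. In particular, the following holds in $M$: ``there exist $\bar\gamma < j(\lambda)$ of uncountable cofinality and a \emph{club} $F \subseteq \bar\gamma$ such that for all $\beta \in F$ and all $i < \tilde{j}_\beta$, $j(S)_{\beta,i}$ reflects at $\bar\gamma$'' — witnessed by $\bar\gamma = \gamma$ and $F$ any club subset of $\gamma$ of order type $\mu$ consisting of limit points (and in fact I can take $F \subseteq j``\lambda$, a club in $\gamma$ in $M$).

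Next I would pull this back. The displayed sentence about $j(S)$, $j(\lambda)$ and $j$ of the matrix is exactly $j$ applied to the statement ``$\DSR({<}\kappa, S^\lambda_\omega)$ holds for \emph{this particular} matrix $\langle S_{\alpha,i}\rangle$,'' — more precisely, $M \models$ ``there exist $\bar\gamma, F$ witnessing diagonal reflection for $j(\langle S_{\alpha,i}\rangle)$'' is the $j$-image of the $V$-statement ``there exist $\gamma', F'$ witnessing diagonal reflection for $\langle S_{\alpha,i}\rangle$.'' By elementarity of $j$, the latter holds in $V$. This gives $\gamma' < \lambda$ of uncountable cofinality and a club $F' \subseteq \gamma'$ with $S_{\alpha,i}$ reflecting at $\gamma'$ for all $\alpha \in F'$, $i < j_\alpha$, which is precisely the instance of $\DSR({<}\kappa, S^\lambda_\omega)$ we wanted. (One subtlety to record: the statement ``$\mathcal T$ reflects at $\gamma$'' is absolute between $M$ and $V[H]$ when $M$ is sufficiently closed, but here I only need the internal-to-$M$ statement to apply elementarity, so absoluteness between $M$ and $V$ is not required — the whole argument takes place via the single application of elementarity of $j$, and the fact that the witness is found \emph{in $M$} is all that matters.)

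The main obstacle is the step establishing that $j(S_{\alpha,i})$ is stationary in $\gamma$ as computed in $M$: one must verify that the preimages $\xi$ landing in a prescribed $M$-club below $\gamma$ can be taken in $S_{\alpha,i}$ and of cofinality $\omega$, using that $j$ is continuous at $\cf(\omega)$-points below $\crit(j)$ and that $j``\lambda$ is $\in M$ and unbounded in $\gamma$ with $\cf^M(\gamma) = \mu > \omega$. The cleanest way is: given $E \in M$ club in $\gamma$, the set $\{\alpha < \lambda \mid \sup(E \cap j``\alpha) = \sup(j``\alpha) \text{ i.e. } j(\alpha) \in \lim^M(E)\}$ contains a club $D$ of $\lambda$ (pulling back via $j$), so pick $\xi \in S_{\alpha,i} \cap \lim(D)$ with $\cf(\xi) = \omega$ — available since $S_{\alpha,i} \subseteq S^\lambda_\omega$ is stationary — and then $j(\xi) = \sup(j``\xi)$ is a limit point of $E$ below $\gamma$ lying in $j(S_{\alpha,i})$, with $\cf^M(j(\xi)) = \omega$. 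Everything else is routine bookkeeping with elementary embeddings.
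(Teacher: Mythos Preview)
Your approach is essentially the same as the paper's: set $\gamma=\sup(j``\lambda)$, show each $j(S_{\alpha,i})\cap\gamma$ is stationary in $\gamma$ (in $M$), take a club $F\subseteq j``\lambda$ in $\gamma$, and reflect via elementarity. One point you gloss over: when you ``pick $\xi\in S_{\alpha,i}\cap\lim(D)$,'' the set $D$ (pulled back from $E\in M\subseteq V[H]$) lives in $V[H]$, not $V$, so you need $S_{\alpha,i}$ to remain stationary in $V[H]$---this holds because $\tilde{\bb{R}}$ is $\mu$-closed (hence countably closed) and $S_{\alpha,i}\subseteq S^\lambda_\omega$, but you should say so explicitly.
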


\begin{proof}
To show that $\DSR({<}\kappa,S^\lambda_{\omega})$ holds, note that since $\kappa=\mu^+$, the principle is equivalent to $\DSR(\mu,S^\lambda_\omega)$. Thus, let a matrix $\langle S_{\alpha,i}\st\alpha<\lambda, ~ i<\mu\rangle$ of stationary subsets of $S^\lambda_\omega$ be given.

By the generic supercompactness of $\kappa$, let $H$ be generic for a $\mu$-closed forcing notion $\tilde{\mathbb{R}}$ such that, in $V[H]$, there is an elementary embedding $j:V\prec M$ satisfying the clauses listed in Definition \ref{def:IndestGenSC}.

Note that $\lambda$ has cofinality $\mu$ in $V[H]$, since this is true in $M$,
by clause (5) of Definition \ref{def:IndestGenSC}, and since $\mu$ is still regular
in $V[H]$, by the $\mu$-closure of $\tilde{\mathbb{R}}$. Thus, $\nu=\sup j``\lambda$
has cofinality $\mu$ as well. Temporarily fixing $\alpha<\lambda$ and $i<\mu$,
let us verify that in $V[H]$, $j``S_{\alpha,i}$ is stationary in $\nu$. First, it
follows that $S_{\alpha,i}$ is still stationary in $V[H]$, because
$S_{\alpha,i}\subseteq S^\lambda_\omega$ and $\tilde{\mathbb{R}}$ is at least
$\omega_1$-closed -- it is well-known that countably closed forcing preserves
stationary subsets of $S^\lambda_\omega$. Now, to see that $j``S_{\alpha,i}$ is
stationary in $\nu$, let $C\subseteq\nu$ be club, with $C\in V[H]$. We have
that $j``\lambda$ is closed under limits of cofinality less
than $\mu$ in $V[H]$, since $V$ and $V[H]$ have the same sequences of ordinals of
length less than $\mu$. Thus, $\bar{C}=j^{-1}``C$ is closed under limits of
cofinality less than $\mu$. Letting $\bar{C}'$ be the closure of $\bar{C}$ in
$\lambda$, $\bar{C}'$ is club in $\lambda$, and hence, as $S_{\alpha,i}$ is
stationary in $V[H]$ and $\bar{C}'\in V[H]$, there is a $\beta\in\bar{C}'\cap
S_{\alpha,i}\subseteq S^\lambda_\omega$, so $\cf(\beta)<\mu$. But this means
that $\beta\in\bar{C}\cap S_{\alpha,i}$, and hence $j(\beta)\in C\cap j``S_{\alpha,i}$.

Since $j``S_{\alpha,i}\subseteq j(S_{\alpha,i})\cap\nu$, this means that $j(S_{\alpha,i})$
reflects at $\nu$ in $V[H]$, and hence in $M$. Let $\vec{T}=j(\vec{S})=\langle
T_{\alpha,i}\st\alpha<j(\lambda), ~ i<\mu\rangle$. Since in $M$, the cofinality
of $\nu$ is $\mu$ and $j``\lambda$ is closed under limits of cofinality less
than $\mu$, there is in $M$ a club $F\subseteq\nu$ of order type $\mu$ with
$F\subseteq j``\lambda$ -- recall that $j``\lambda\in M$, by clause \ref{item:PointwiseImageInM} of Definition
\ref{def:IndestGenSC}. Thus, $\nu$ and $F$ witness that
\begin{eqnarray*}
M\models\exists\nu'<j(\lambda)\exists F'&&(F'\subseteq\nu'\ \text{is club, } \cf(\nu')=\mu\ \\
&& \text{and}\ \forall\alpha\in F'\forall i<\mu\quad T_{\alpha,i}\ \text{reflects at}\ \nu').
\end{eqnarray*}%
This uses the fact that $F\subseteq j``\lambda$, so that if $\alpha\in F$, then $\alpha=j(\bar{\alpha})$ for some $\bar{\alpha}<\lambda$, and hence $T_{\alpha,i}=j(S_{\bar{\alpha},i})$ reflects at $\nu$.
The same statement is then true in $V$ about $\vec{S}$, by the elementarity of $j$. But this means that, in $V$ there are $\nu'<\lambda$ and $F'$ such that $\cf(\nu')=\mu$, $F'\subseteq\nu'$ is club, $\otp(F')=\mu$, and, for all $\alpha\in F'$ and all $i<\mu$, $S_{\alpha,i}$ reflects at $\nu'$, as wished.
\end{proof}

\begin{corollary}
\label{cor:IndestDSRfromIndestGenSC}
Suppose that $\mu$ is an uncountable regular cardinal and $\kappa=\mu^+$ is indestructibly generically supercompact. Then if $G$ is generic for a $\kappa$-directed closed notion of forcing and if $\lambda>\kappa$ is regular in $V[G]$, then the principle $\DSR({<}\kappa, S^\lambda_{\omega})$ holds in $V[G]$.
\end{corollary}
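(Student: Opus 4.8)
The plan is to combine the indestructibility of the generic supercompactness of $\kappa$ with Lemma \ref{lem:DSRfromGenSC}, the latter applied inside the forcing extension $V[G]$ rather than in the ground model. So the task reduces to checking that the hypotheses of Lemma \ref{lem:DSRfromGenSC} are met in $V[G]$.

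First I would observe that, since $\mathbb{R}$ is $\kappa$-directed closed, it is in particular ${<}\kappa$-closed, and hence adds no new bounded subsets of $\kappa$. Consequently, all cardinals and cofinalities ${\leq}\kappa$ are preserved when passing from $V$ to $V[G]$: in particular, $\mu$ remains an uncountable regular cardinal in $V[G]$, $\kappa$ remains a cardinal, and the equality $\kappa=\mu^+$ continues to hold in $V[G]$. By the definition of indestructible generic supercompactness (Definition \ref{def:IndestGenSC}), $\kappa$ is generically supercompact in $V[G]$. Finally, $\lambda>\kappa$ is regular in $V[G]$ by the hypothesis of the corollary. Thus all the hypotheses of Lemma \ref{lem:DSRfromGenSC} — namely that $\mu$ is an uncountable regular cardinal, that $\kappa=\mu^+$ is generically supercompact, and that $\lambda>\kappa$ is regular — are satisfied in $V[G]$.

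Applying Lemma \ref{lem:DSRfromGenSC} inside $V[G]$ then immediately yields that $\DSR({<}\kappa,S^\lambda_\omega)$ holds in $V[G]$, which is precisely the conclusion we seek. There is essentially no obstacle to overcome here; the only point requiring a word of care is the verification that $\kappa$-directed closed forcing preserves enough of the relevant structure — the regularity of $\mu$ and the identity $\kappa=\mu^+$ — so that Lemma \ref{lem:DSRfromGenSC} is genuinely applicable in $V[G]$, the regularity of $\lambda$ in $V[G]$ being part of the hypothesis.
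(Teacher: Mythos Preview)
Your proposal is correct and matches the paper's approach: the corollary is stated without proof in the paper because it is immediate from Lemma~\ref{lem:DSRfromGenSC} applied in $V[G]$, exactly as you argue. The only points to check are that $\mu$ remains regular, that $\kappa=\mu^+$ persists, and that $\kappa$ is generically supercompact in $V[G]$, all of which you handle correctly.
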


\begin{remark} \label{reflection_remark}
  The only way in which we used that assumption that we
  were only working with stationary subsets of $S^\lambda_\omega$ in the
  above proofs is the fact that the stationarity of subsets of
  $S^\lambda_\omega$ is preserved by $\mu$-closed forcing. This property is
  also satisfied by all stationary subsets of $S^\lambda_{<\mu}$ that
  lie in the approachability ideal $I[\lambda]$. If $\nu$ is a regular
  cardinal and $\nu^+ < \lambda$, then there is a stationary set
  $\Sigma \subseteq S^{\lambda}_\nu$ with $\Sigma \in I[\lambda]$ (cf.\
  \cite[\S 3.3]{eisworth}). Therefore, if $\nu < \mu$ is a fixed regular
  cardinal, then we can find a stationary $\Sigma
  \subseteq S^\lambda_\nu$ such that, in the conclusions of Lemma
  \ref{lem:DSRfromGenSC} and Corollary \ref{cor:IndestDSRfromIndestGenSC},
  $\DSR({<}\kappa, S^\lambda_\omega)$
  can be strengthened to $\DSR({<}\kappa, S^\lambda_\omega \cup \Sigma)$.
  This will be relevant below, in particular in Theorem
  \ref{thm:sDSRDoesNotImplyDSR}.
\end{remark}

\subsection{Forcing preliminaries}

In this subsection, we introduce some basic forcing notions and facts that will
be used in our separation results. An essential tool will be the following canonical
forcing notion to destroy the stationarity of a set by ``shooting a club through
its complement.''

\begin{definition}
\label{def:KillingStationarySets}
Let $\lambda$ be an uncountable regular cardinal, and let $S\subseteq\lambda$ be unbounded. The forcing notion $\bb{T}_S$ consists of all closed, bounded subsets $t$ of $\lambda$ such that $t \cap S = \emptyset$. The ordering is defined by setting, for $t_0, t_1 \in \bb{T}_S$, $t_1 \leq_{\bb{T}_S} t_0$ if
$t_1$ \emph{end-extends} $t_0$, that is, $t_1\cap\sup\{\xi+1\st\xi\in t_0\}=t_0$.
\end{definition}

In many natural cases, the forcing $\bb{T}_S$ is $\lambda$-distributive,
i.e., forcing with it does not add any new sequences of ordinals of length
less than $\lambda$. For example, if
$\lambda\setminus S$ is \emph{fat}, meaning that for every club $C\subseteq\lambda$ and every
$\alpha<\lambda$, there is a closed set $D\subseteq C\cap(\lambda\setminus S)$ of order type
$\alpha$, and if $\lambda$ is inaccessible, or $\lambda=\mu^+$, where $\mu$ is regular and
$\mu^{{<}\mu}=\mu$, then $\bb{T}_S$ is $\lambda$-distributive and adds a club subset of
$\lambda$ that is disjoint from $S$; see \cite[Theorem 1]{AbrahamShelah:ForcingClubs}. In
particular, this is the case if $\lambda=\omega_1$ and $\omega_1\setminus S$ is stationary.
So in these situations, the assumptions of the following lemma are satisfied.
In all of our applications, $\bb{T}_S$ will be $\lambda$-distributive.

\begin{lemma}
\label{lem:BasicsOnT_S}
Let $\lambda$ be an uncountable regular cardinal, and let $S\subseteq\lambda$ be such that both $S$ and $\lambda\setminus S$ are unbounded in $\lambda$.% and suppose that $\bb{T}_S$ is $\lambda$-distributive and for every $\alpha<\lambda$,
\begin{enumerate}[label=(\arabic*)]
  \item
  \label{item:AddingAClub}
  Let $G$ be $\bb{T}_S$-generic over $V$. Then
  $\bigcup G$ is a club subset of $\lambda$ that is disjoint from $S$.
  \item
  \label{item:TFAE}
  Suppose that $\bb{T}_S$ preserves the fact that $\lambda$ has uncountable cofinality (in our applications, $\bb{T}_S$ will be $\lambda$-distributive, so this will certainly be the case).
  Then, for a set $T\subseteq\lambda$ such that $T\setminus S\subseteq S^\lambda_\omega$, the following are equivalent:
  \begin{enumerate}[label=(\alph*)]
    \item
    \label{item:T_SpreservesTAlways}
    $\forces_{\bb{T}_S}$ ``$\check{T}$ is stationary in $\check{\lambda}$.''
    \item
    \label{item:T_SpreservesTSometimes}
    There is a $t\in\bb{T}_S$ such that $t\forces_{\bb{T}_S}$ ``$\check{T}$ is stationary in $\check{\lambda}$.''
    \item
    \label{item:TminusSstationary}
    $T\setminus S$ is stationary.
  \end{enumerate}
\end{enumerate}
\end{lemma}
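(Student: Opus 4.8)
The plan is to dispatch (1) with two density arguments and to prove (2) as the cycle (a)$\Rightarrow$(b)$\Rightarrow$(c)$\Rightarrow$(a), with (c)$\Rightarrow$(a) carrying all the real content. For (1), note first that since $\lambda\setminus S$ is unbounded, any $t\in\bb{T}_S$ is end-extended by $t\cup\{\xi\}$ for any $\xi\in\lambda\setminus S$ above $\sup(t)$, so $\{t\in\bb{T}_S : \sup(t)>\alpha\}$ is dense for each $\alpha<\lambda$; hence $\bigcup G$ is unbounded in $\lambda$, and it is disjoint from $S$ because every condition in $G$ is. For closedness, given a limit point $\beta<\lambda$ of $\bigcup G$, I would pick $t\in G$ with $\sup(t)>\beta$; using that the conditions in the filter $G$ are pairwise $\leq_{\bb{T}_S}$-comparable (if $u$ end-extends both $s_0,s_1$ then the one with smaller supremum is an initial segment of the other), one checks $\bigcup G\cap(\beta+1)=t\cap(\beta+1)$, so $\beta$ is a limit point of the closed set $t$ and therefore lies in $t\subseteq\bigcup G$.

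For (2), the implication (a)$\Rightarrow$(b) is immediate since $\emptyset\in\bb{T}_S$. For (b)$\Rightarrow$(c) I would suppose toward a contradiction that some $t$ forces $\check T$ stationary while a club $C\subseteq\lambda$ in $V$ has $C\cap(T\setminus S)=\emptyset$; forcing below $t$ with a generic $G$, the set $C$ stays club in $V[G]$ (closedness and unboundedness of a set of ordinals below $\lambda$ are absolute), $\bigcup G$ is club and disjoint from $S$ by (1), and $C\cap\bigcup G$ is again club because $\lambda$ retains uncountable cofinality in $V[G]$ by the hypothesis on $\bb{T}_S$. Every element of $C\cap\bigcup G$ avoids $S$, hence avoids $T\setminus S$ by choice of $C$, hence avoids $T$; so $C\cap\bigcup G$ is a club disjoint from $T$, contradicting the stationarity of $\check T$ in $V[G]$.

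The crux is (c)$\Rightarrow$(a), and I would argue it as follows. Assume $T\setminus S$ is stationary; it suffices to show that no condition forces a club disjoint from $\check T$. Fix $t\in\bb{T}_S$ and a name $\dot C$ with $t\forces$``$\dot C$ is a club in $\check\lambda$'', fix a large regular $\theta$, a well-order of $H_\theta$, and a parameter $A$ coding $t,\dot C,S,T,\lambda,\bb{T}_S$. Standard arguments show that $\{\delta<\lambda : \mathrm{Sk}(\delta\cup\{A\})\cap\lambda=\delta\}$ is club, so since $T\setminus S$ is stationary I can choose such a $\delta$ in $T\setminus S$ and set $M=\mathrm{Sk}(\delta\cup\{A\})$; then $M\prec H_\theta$, $M\cap\lambda=\delta$, $\delta\notin S$, and $\cf(\delta)=\omega$ because $T\setminus S\subseteq S^\lambda_\omega$. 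Fixing an increasing sequence $\langle\delta_n : n<\omega\rangle$ cofinal in $\delta$, I would recursively build a $\leq_{\bb{T}_S}$-decreasing chain $t=s_0\geq s_1\geq\cdots$ of conditions, each lying in $M$, together with ordinals $\epsilon_n$ such that $\delta_n<\epsilon_n$, $\delta_n<\sup(s_{n+1})$, and $s_{n+1}\forces\check\epsilon_n\in\dot C$: at stage $n$, since $s_n\in M\cap\bb{T}_S$ forces ``$\dot C$ is unbounded'', the existential assertion ``there are $s'\leq_{\bb{T}_S}s_n$ and $\epsilon>\delta_n$ with $\sup(s')>\delta_n$ and $s'\forces\check\epsilon\in\dot C$'' is true in $H_\theta$ (thin $s_n$ to decide an element of $\dot C$ above $\delta_n$, then end-extend by an ordinal of $\lambda\setminus S$ above $\delta_n$) and has all parameters in $M$, so a witness $(s_{n+1},\epsilon_n)\in M$ exists, and membership in $M$ forces $\sup(s_{n+1})<\delta$ and $\epsilon_n<\delta$. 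Finally I would set $s^*=\bigl(\bigcup_{n<\omega}s_n\bigr)\cup\{\delta\}$: since $\sup(s_n)\nearrow\delta$, this is a closed bounded subset of $\lambda$ disjoint from $S$ (each $s_n$ is, and $\delta\notin S$), hence $s^*\in\bb{T}_S$, and $s^*$ end-extends every $s_n$, so $s^*\leq_{\bb{T}_S}t$. As $s^*\forces\check\epsilon_n\in\dot C$ for all $n$, as $\langle\epsilon_n\rangle$ is cofinal in $\delta$, and as $s^*\forces$``$\dot C$ is closed'', it follows that $s^*\forces\check\delta\in\dot C$, and since $\delta\in T$ this gives $s^*\forces\check\delta\in\dot C\cap\check T$, completing the argument.

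I expect the main obstacle to be precisely the last step of (c)$\Rightarrow$(a): ensuring that the $\omega$-union of the chain, once closed off, is a genuine condition of $\bb{T}_S$, i.e.\ that the ordinal it is topped off with lies outside $S$. This is exactly why the construction must force $\sup(s_{n+1})>\delta_n$ at every stage, so that the union climbs all the way up to $\delta\in T\setminus S\subseteq\lambda\setminus S$, and it is precisely where the hypothesis $T\setminus S\subseteq S^\lambda_\omega$ is indispensable, since it delivers $\cf(\delta)=\omega$, which is what allows a chain of length $\omega$ to reach $\delta$ cofinally (for stationary sets concentrated on higher cofinalities one would have to invoke approachability, as the following remark indicates). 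A secondary point requiring care is that the existential statement reflected into $M$ must bundle the witnessing condition together with the ordinal $\epsilon$ it forces into $\dot C$, so that elementarity of $M$ places \emph{both} below $\delta=M\cap\lambda$.
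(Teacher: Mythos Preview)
Your proof is correct and follows essentially the same approach as the paper's: the same density arguments for (1), the same club-intersection contradiction for (b)$\Rightarrow$(c), and the same elementary-submodel construction for (c)$\Rightarrow$(a), building an $\omega$-chain of conditions inside $M$ whose maxima climb to $\delta=M\cap\lambda\in T\setminus S$ and then capping off with $\delta$. The only differences are cosmetic---you give a bit more detail on closedness in (1) and are more explicit about the Skolem-hull club in (c)$\Rightarrow$(a)---but the architecture is identical.
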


\begin{proof}
Claim \ref{item:AddingAClub} is immediate, since the fact that $\lambda \setminus
S$ is unbounded implies that, for every $\alpha<\lambda$, the set
$\{p\in\bb{T}_S\st\max(p)>\alpha\}$ is dense in $\bb{T}_S$.

Let us now prove claim \ref{item:TFAE}.

The implication \ref{item:T_SpreservesTAlways}$\implies$\ref{item:T_SpreservesTSometimes} is obvious.

For the implication \ref{item:T_SpreservesTSometimes}$\implies$\ref{item:TminusSstationary}, let $t\in\bb{T}_S$ be as in \ref{item:T_SpreservesTSometimes}, suppose $T$ is stationary in $V[G]$, where $G$ is generic for $\bb{T}_S$ with $t\in G$, and assume that $T\setminus S$ is not stationary in $V$. Let $C$ be a club subset of $\lambda$ disjoint from $T\setminus S$. Let $D=\bigcup G$, so $D$ is a club subset of $\lambda$ disjoint from $S$, by \ref{item:AddingAClub}. Since $\bb{T}_S$ preserves the fact that $\lambda$ has uncountable cofinality, $C\cap D$ is a club subset of $\lambda$. But $T=(T\cap S)\dot{\cup}(T\setminus S)$,
$(T\cap S)\cap D=\emptyset$ and $(T\setminus S)\cap C=\emptyset$, so $T\cap(C\cap D)=\emptyset$, contradicting the fact that $T$ is stationary in $V[G]$.

For the implication \ref{item:TminusSstationary}$\implies$\ref{item:T_SpreservesTAlways}, assume
that $T\setminus S\subseteq S^\lambda_\omega$ is stationary, but suppose that there are a
$\dot{C}\in V^{\bb{T}_S}$ and a condition $t\in\bb{T}$ forcing that $\dot{C}$ is a club subset
of $\check{\lambda}$ disjoint from $\check{T}$. Since $T\setminus S$ is stationary, we can find
an elementary submodel $M\prec H_\theta$, where $\theta$ is some sufficiently large regular
cardinal, such that the cardinality of $M$ is less than $\lambda$, $\delta=M\cap\lambda\in
T\setminus S$, and $M$ contains all relevant objects, including $t$, $S$, $T$ and $\dot{C}$.
Since $\delta\in (T\setminus S)\subseteq S^\lambda_\omega$, we can fix a strictly
increasing sequence of ordinals $\langle\xi_n\st n<\omega\rangle$ cofinal in $\delta$.
We can then simultaneously define sequences $\langle t_n\st n<\omega\rangle$ and
$\langle\alpha_n\st n<\omega\rangle$ by recursion on $n$ so that
the following conditions are satisfied for all $n<\omega$:
\begin{enumerate}
  \item $\xi_n<\alpha_n<\alpha_{n+1}<\delta$,
  \item $t_n\in\bb{T}_S\cap M$,
  \item $t_{n+1}\le t_n$, $t_0\le t$, $\max(t_n)>\xi_n$,
  \item $t_n\forces_{\bb{T}_S}\check{\alpha}_n\in\dot{C}$.
\end{enumerate}
For the construction, in the case $n=0$, working in $M$, since $t$ forces that $\dot{C}$ is
unbounded in $\delta$, there are a $t_0\le t$ and an $\alpha_0>\xi_0$ such that $t_0$ forces
that $\check{\alpha}_0\in\dot{C}$. $t_0$ may be chosen so that $\max(t_0)>\xi_0$, since
$\delta\setminus S$ is unbounded in $\delta$, by elementarity. Similarly, if $t_n$, $\alpha_n$
have already been defined, then, again working in $M$, there are a $t_{n+1}\le t_n$ with
$\max(t_{n+1})>\xi_{n+1}$ and an $\alpha_{n+1}>\xi_{n+1}$ such that $t_{n+1}$ forces that
$\check{\alpha}_{n+1}\in\dot{C}$. This completes the construction, and we can set
$t^*=(\bigcup_{n<\omega}t_n)\cup\{\delta\}$. Since $\delta\notin S$, it follows that
$t^*\in\bb{T}_S$, and since $t^*\le t_n$, it follows that $t^*$ forces that
$\alpha_n\in\dot{C}$, for every $n<\omega$. Since for all $n<\omega$, $\delta>\alpha_n>\xi_n$,
it follows that $\sup_{n<\omega}\alpha_n=\delta$. Thus, since $t^*$ forces that $\dot{C}$ is
closed below $\check{\lambda}$, we have that $t^*$ forces
$\check{\delta}\in\dot{C}\cap\check{T}$, a contradiction.
\end{proof}

We briefly recall the concept of strategic closure.

\begin{definition}
\label{def:StrategicallyClosed}
Let $\bb{P}$ be a notion of forcing, and let $\beta$ be an ordinal. In the game
$\Game_\beta(\bb{P})$, two players collaborate to play a $\leq_{\bb{P}}$-descending sequence
$\langle p_\alpha\st\alpha<\beta\rangle$ as follows. Player II plays at even stages of the
game (including limit stages) and in round $0$ must play $p_0=1_{\bb{P}}$.
If, during the course of play, a limit ordinal $\alpha < \beta$ is reached such
that $\langle p_\xi \mid \xi < \alpha \rangle$ has no lower bound in $\bb{P}$
(and hence Player II cannot continue playing), then Player I wins. Otherwise,
Player II wins. $\bb{P}$ is \emph{$\beta$-strategically closed} if Player
II has a winning strategy in the game $\Game_\beta(\bb{P})$.
\end{definition}

Observe that if $\bb{P}$ is $\lambda$-strategically closed for some regular cardinal $\lambda$, then $\bb{P}$ is $\lambda$-distributive.

\subsection{Separation results}

We are now ready to turn to our separation results. The first result
shows that even the strongest simultaneous reflection principles do not imply
unbounded diagonal reflection.

\begin{theorem}
\label{thm:SimultaneousReflDoesNotImplyUnbounded}
  Suppose that %
  $\lambda=\lambda^{{<}\lambda} \geq \aleph_2$ is a regular cardinal, $1 < \kappa \leq \lambda$,
  and $\refl^*({<}\kappa, S^\lambda_\omega)$ holds. Then
  there is a cofinality-preserving forcing extension in which
  \begin{enumerate}
    \item $\refl({<}\kappa, S^\lambda_\omega)$ holds;
    \item $\uDSR(1, S^\lambda_\omega)$ fails.
  \end{enumerate}
\end{theorem}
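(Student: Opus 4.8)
The plan is to add, by a $\lambda$-distributive forcing, a counterexample to $\uDSR(1,S^\lambda_\omega)$ while preserving $\refl({<}\kappa,S^\lambda_\omega)$. A counterexample to $\uDSR(1,S^\lambda_\omega)$ is a sequence $\langle S_\alpha \st \alpha < \lambda\rangle$ of stationary subsets of $S^\lambda_\omega$ such that for no $\gamma \in S^\lambda_{>\omega}$ and unbounded $F \subseteq \gamma$ do all the $S_\alpha$ with $\alpha \in F$ reflect at $\gamma$. The natural way to manufacture such a sequence is to fix in $V$ a partition $\langle R_\alpha \st \alpha < \lambda\rangle$ of (a stationary subset of) $S^\lambda_\omega$ into stationary pieces, and then force to kill, for each $\gamma \in S^\lambda_{>\omega}$, the stationarity in $\gamma$ of ``too many'' of the $R_\alpha \cap \gamma$ — enough that the set of $\alpha < \gamma$ for which $R_\alpha$ still reflects at $\gamma$ is bounded in $\gamma$. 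Concretely: for each $\gamma \in S^\lambda_{>\omega}$ we want to shoot a club through $\gamma$ avoiding $\bigcup_{\alpha \in A_\gamma} R_\alpha$ for some unbounded $A_\gamma \subseteq \gamma$; doing this for a single $\gamma$ is exactly the forcing $\bb{T}_S$ of Definition \ref{def:KillingStationarySets} relative to $\gamma$, and one arranges a suitable iteration or product (with appropriate support) that simultaneously handles all $\gamma < \lambda$. Since $\lambda = \lambda^{<\lambda}$, one can set this up as a ${<}\lambda$-support iteration of length $\lambda$ (or, where products suffice, a full-support product of the $\bb{T}_{\cdot}$'s below each $\gamma$), each component being $\gamma$-distributive and of size ${<}\lambda$, hence the whole iteration is $\lambda$-distributive and $\lambda^+$-cc, so all cofinalities are preserved.

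The first task is to verify that in the extension $V_1$ the sequence $\langle R_\alpha \st \alpha<\lambda\rangle$ is genuinely a counterexample: each $R_\alpha$ must remain stationary in $\lambda$ (which follows from $\lambda$-distributivity together with the fact that we only ever kill stationarity of sets \emph{below} some $\gamma$, never globally — here one uses Lemma \ref{lem:BasicsOnT_S}\ref{item:TFAE}, noting $R_\alpha \subseteq S^\lambda_\omega$), while for every $\gamma \in S^\lambda_{>\omega}$ and every unbounded $F \subseteq \gamma$ there is $\alpha \in F$ with $R_\alpha \cap \gamma$ nonstationary in $\gamma$. The bookkeeping that guarantees the latter — choosing the $A_\gamma$ so that its complement in $\gamma$ is bounded, and making sure the relevant club through $\gamma$ actually gets added — is routine once the iteration is laid out. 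The genuinely delicate point, and the main obstacle, is the second task: showing that $\refl({<}\kappa,S^\lambda_\omega)$ survives into $V_1$. This is where the strategy described in the introduction to Section \ref{sec:Separation} comes in. We start from the \emph{indestructible} version $\refl^*({<}\kappa,S^\lambda_\omega)$, and we must exhibit a further $\lambda$-directed closed forcing of size $\lambda$ leading from $V_1$ to some $V_2$ in which $\refl({<}\kappa,S^\lambda_\omega)$ holds by indestructibility, and then transfer reflection down from $V_2$ to $V_1$.

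For the ``up then down'' argument: the iteration from $V$ to $V_1$ consists of club-shooting forcings $\bb{T}_{\cdot}$; such a forcing is \emph{not} $\lambda$-directed closed, but its standard feature is that after forcing with it one can ``reverse'' it by a $\lambda$-closed (indeed highly closed) forcing, or — more to the point — the two-step iteration $V \to V_1 \to V_2$ can be rearranged so that the composite forcing $V \to V_2$ is $\lambda$-directed closed of size $\lambda$, at which point $\refl^*$ in $V$ gives $\refl({<}\kappa,S^\lambda_\omega)$ in $V_2$. The classical device here is that $\bb{T}_S \ast \bb{T}_{(\text{generic club})}$-type combinations absorb into a directed-closed forcing; alternatively, one shoots the clubs through $\gamma$ in a way that, together with a tail forcing that adds a full club through $\lambda \setminus \bigcup R_\alpha$ (which exists by fatness of the complement since $\lambda = \lambda^{<\lambda}$), makes the product directed-closed. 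Finally, to transfer $\refl({<}\kappa,S^\lambda_\omega)$ from $V_2$ back to $V_1$: given in $V_1$ a family $\mathcal{T}$ of fewer than $\kappa$ stationary subsets of $S^\lambda_\omega$, these remain stationary in $V_2$ (the forcing $V_1 \to V_2$ being $\lambda$-distributive, or at least countably closed, so stationarity of subsets of $S^\lambda_\omega$ is preserved), so $\mathcal{T}$ reflects simultaneously at some $\gamma$ in $V_2$; but ``$S \cap \gamma$ is stationary in $\gamma$'' for $S \subseteq S^\lambda_\omega$ is downward absolute from $V_2$ to $V_1$ since no clubs of $\gamma$ are destroyed going down, so $\mathcal{T}$ reflects at $\gamma$ already in $V_1$. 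The one subtlety to watch is that $\gamma$ itself be in $S^\lambda_{>\omega}$ in $V_1$, which is fine since cofinalities are preserved throughout. Assembling these pieces — in particular arranging the iteration so that both ``counterexample appears'' and ``composite to $V_2$ is $\lambda$-directed closed'' hold simultaneously — is the crux.
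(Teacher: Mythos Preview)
Your ``up then down'' strategy using $\refl^*$ is the right framework, but the execution has two genuine gaps. First, your iteration cannot be $\lambda$-distributive: shooting a club through some $\gamma<\lambda$ disjoint from a set stationary in $\gamma$ adds a new subset of $\gamma$, so each nontrivial component already violates $\lambda$-distributivity, and no support choice repairs this. Second, and more seriously, your suggested tail forcing---shooting a club through $\lambda \setminus \bigcup_\alpha R_\alpha$---destroys the stationarity of every subset of $S^\lambda_\omega$, so in $V_2$ there is nothing left to reflect and the transfer-down step is vacuous. The tail forcing must depend on the particular family $\langle T_\zeta \mid \zeta < \mu\rangle$ you are trying to reflect and must preserve each $T_\zeta$; you give no mechanism for this, and with a ground-model partition $\langle R_\alpha\rangle$ there is no reason a given $T_\zeta$ should concentrate on an initial segment of the $R_\alpha$'s.

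The paper sidesteps both problems by not fixing the partition in $V$. It forces with a single $\lambda$-strategically closed poset $\bb{P}$ whose conditions are functions $p:\gamma^p\times\gamma^p\to 2$ approximating the whole sequence $\langle S_\alpha\rangle$ at once, with the failure of $\uDSR$ hard-coded into the definition of a condition: for each $\beta\le\gamma^p$ of uncountable cofinality, the set of $\alpha<\beta$ with $S^p_\alpha\cap\beta$ stationary in $\beta$ must be bounded below $\beta$. Because the conditions also require $S^p_\alpha\subseteq S\setminus(\alpha+1)$, a diagonal-intersection argument shows that in $V[G]$ every stationary $T\subseteq S^\lambda_\omega$ has $T\setminus S_{\geq\alpha}$ stationary for some $\alpha<\lambda$. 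So given $\langle T_\zeta\mid\zeta<\mu\rangle$ with $\mu<\kappa$, one finds a common $\beta<\lambda$ and uses as tail forcing the single club-shooting $\bb{T}_\beta$ through $\lambda\setminus S_{\geq\beta}$, which preserves each $T_\zeta$. The crux you correctly flag is then resolved explicitly: $\bb{P}*\dot{\bb{T}}_\beta$ has a dense $\lambda$-directed closed subset of size $\lambda$, because at limit stages the club accumulated in the second coordinate is precisely the witness for the boundedness condition required in the first coordinate, so lower bounds exist.
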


\begin{proof}
  Let $S = S^\lambda_\omega$, and let $\bb{P}$ be the forcing poset whose conditions
  are all functions of the form $p: \gamma^p \times \gamma^p \rightarrow 2$ such that
  \begin{itemize}
    \item $\gamma^p < \lambda$;
    \item for each $\alpha < \gamma^p$, letting $S^p_\alpha = \{\eta < \gamma^p
    \mid p(\alpha, \eta) = 1\}$, we have
    \begin{itemize}
      \item $S^p_\alpha \subseteq S \setminus (\alpha + 1)$;
      \item for all $\beta \in S^{\gamma^p + 1}_{>\omega}$, the set
      \[
        \{\alpha < \beta \mid S^p_\alpha \cap \beta \text{ is stationary in } \beta\}
      \]
      is bounded below $\beta$;
    \end{itemize}
    \item for all $\alpha < \beta < \gamma^p$, we have $S^p_\alpha \cap S^p_\beta = \emptyset$.
  \end{itemize}
  For all $p,q \in \bb{P}$, we say that $q \leq_{\bb{P}} p$ if $\gamma^q \geq
  \gamma^p$ and $q \restriction (\gamma^p \times \gamma^p) = p$.

  \begin{claim} \label{strat_closed_claim}
    $\bb{P}$ is $\lambda$-strategically closed.
  \end{claim}

  \begin{proof}
    We describe a winning strategy for Player II in $\Game_\lambda(\bb{P})$.
    In a run of $\Game_\lambda(\bb{P})$, the players will produce a
    $\leq_{\bb{P}}$-descending sequence $\vec{p} = \langle p_\eta \mid \eta < \lambda \rangle$.
    For each $\eta \in \lim(\lambda)$, we will let $\delta_\eta =
    \sup\{\gamma^{p_\xi} \mid \xi < \eta\}$ and then let $D = \{\delta_\eta \mid
    \eta \in \lim(\lambda)\}$. $D$ will be a club in $\lambda$.
    Also, for each $\alpha < \lambda$, we will let $S_\alpha =
    \bigcup_{\eta < \lambda} S^{p_\eta}_\alpha$.
    Player II will play in a way that ensures that, for all $\alpha < \lambda$,
    $S_\alpha \cap D = \emptyset$. Since $S_\alpha \cap (\alpha + 1) = \emptyset$ for
    all $\alpha < \lambda$, this amounts to ensuring that, for all $\eta \in
    \lim(\lambda)$ and all $\alpha < \delta_\eta$, we have
    $\delta_\eta \notin S_\alpha$.

    To start, Player II must play the empty condition as $p_0$. Next, suppose
    that $\eta < \lambda$ is an odd ordinal and $\langle p_\xi \mid \xi \leq \eta \rangle$
    has been played. Player II then plays any condition $p_{\eta + 1}$ with
    $\gamma^{p_{\eta + 1}} > \gamma^{p_\eta}$. Finally, suppose that
    $\eta < \lambda$ is a limit ordinal and $\langle p_\xi \mid \xi < \eta \rangle$
    has been played. Recall that $\delta_\eta = \sup\{\gamma^{p_\xi} \mid
    \xi < \eta\}$. Player II will play a condition $p_\eta$ with
    $\gamma^{p_\eta} = \delta_\eta + 1$, defined by letting
    $p_\eta \restriction (\delta_\eta \times \delta_\eta) = \bigcup_{\xi < \eta} p_\xi$
    and $p_\eta(\alpha, \eta) = 0$ for all $(\alpha, \eta) \in (\gamma^{p_\eta}
    \times \gamma^{p_\eta}) \setminus (\delta_\eta \times \delta_\eta)$.
    To see that $p_\eta$ is indeed a condition, it remains to show that, if
    $\cf(\delta_\eta) > \omega$, then the set
    \[
      \{\alpha < \delta_\eta \mid S^{p_\eta}_\alpha \cap \delta_\eta
      \text{ is stationary in } \delta_\eta\}
    \]
    is bounded below $\delta_\eta$. But, in fact, this set is empty: if
    $\cf(\delta_\eta) > \omega$, then $D \cap \delta_\eta = \{\delta_\xi \mid
    \xi \in \lim(\eta)\}$ is a club in $\delta_\eta$, and Player II's previous
    plays at limit stages have ensured that, for every $\alpha <\delta_\eta$,
    we have $S^{p_\eta}_\alpha \cap D \cap \delta_\eta = \emptyset$, so
    $S^{p_\eta}_\alpha \cap \delta_\eta$ is nonstationary in $\delta_\eta$.
    It follows that $p_\eta$ is indeed a condition in $\bb{P}$
    and that this describes a winning strategy for Player II in $\Game_\lambda(\bb{P})$.
  \end{proof}

  Note that it is implicit in this proof that $\bb{P}$ is countably closed.
  It is now straightforward to see that, for every $\alpha < \lambda$, the set
  $E_\alpha = \{p \in \bb{P} \mid \alpha < \gamma^p\}$ is a dense open subset
  of $\bb{P}$. Suppose that $G$ is a $\bb{P}$-generic filter over $V$. It follows
  that $f = \bigcup G$ is a function from $\lambda \times \lambda$ to $2$. For
  $\alpha < \lambda$, let $S_\alpha = \{\eta < \lambda \mid f(\alpha, \eta) = 1\}$.

  $V[G]$ is our desired model. Since $\bb{P}$ is $\lambda$-strategically closed
  and of size $\lambda=\lambda^{{<}\lambda}$, we know that $V[G]$ is a cofinality-preserving
  forcing extension of $V$. The following claim will allow us to conclude that
  the sequence $\langle S_\alpha \mid \alpha < \lambda \rangle$ witnesses
  the failure of $\uDSR(1, S^\lambda_\omega)$.

  \begin{claim} \label{stat_claim_i}
    In $V[G]$, for all $\alpha < \lambda$, $S_\alpha$ is a stationary subset of $S$.
  \end{claim}

  \begin{proof}
    Fix $\alpha < \lambda$. The fact that $S_\alpha$ is a subset of $S$ follows immediately from the
    definition of $\bb{P}$. To see that $S_\alpha$ is stationary in $V[G]$,
    work in $V$, let $\dot{S}_\alpha$ be the canonical $\bb{P}$-name for $S_\alpha$, let
    $p \in \bb{P}$, and let $\dot{C}$ be a $\bb{P}$-name such that
    $p \Vdash_{\bb{P}}``\dot{C} \text{ is a club in }\check{\lambda}"$.
    We will find a condition $q \leq p$ and an ordinal $\eta \in S$ such that
    $q \Vdash_{\bb{P}}``\check{\eta} \in \dot{S}_\alpha \cap \dot{C}"$.

    We will define a decreasing sequence of conditions $\langle p_k \mid k \leq \omega \rangle$
    and an increasing sequence of ordinals $\langle \eta_k \mid k < \omega \rangle$.
    To begin, let $p_0$ be any condition extending $p$ such that
    $\gamma^{p_0} > \alpha$. If $k < \omega$ and $p_k$ has been specified, let
    $p_{k+1}$ be any extension of $p_k$ such that there is an ordinal
    $\eta_k$ for which
    \begin{itemize}
      \item $\gamma^{p_k} < \eta_k < \gamma^{p_{k+1}}$ and
      \item $p_{k+1} \Vdash_{\bb{P}}``\check{\eta}_k \in \dot{C}"$.
    \end{itemize}
    This is straightforward given the discussion in the paragraph preceding
    the claim and the fact that $p$ forces $\dot{C}$ to be unbounded in $\lambda$.

    After $\langle p_k \mid k < \omega \rangle$ has been defined, let
    $\eta = \sup\{\gamma^{p_k} \mid k < \omega\} = \sup\{\eta_k \mid k < \omega\}$,
    and define a condition $p_\omega$ with $\gamma^{p_\omega} = \eta + 1$
    by letting
    \begin{itemize}
      \item $p_\omega \restriction (\eta \times \eta) = \bigcup_{k < \omega} p_k$;
      \item $p_\omega(\alpha, \eta) = 1$;
      \item $p_\omega(\beta, \xi) = 0$ for all $(\beta, \xi) \in (\gamma^{p_\omega} \times \gamma^{p_\omega})
        \setminus ((\eta \times \eta) \cup \{(\alpha, \eta)\})$.
    \end{itemize}
    It is clear that $p_\omega$ is a lower bound for $\langle p_k \mid k < \omega \rangle$.
    Also, since $p_\omega(\alpha, \eta) = 1$, we have
    $p_\omega \Vdash_{\bb{P}} ``\check{\eta} \in \dot{S}_\alpha"$. Moreover, for all
    $k < \omega$, $p_\omega \Vdash_{\bb{P}} ``\check{\eta}_k \in \dot{C}"$, so,
    since $p_\omega$ extends $p$ and $p$ forces $\dot{C}$ to be a club,
    we also have $p_\omega \Vdash_{\bb{P}}``\check{\eta} \in \dot{C}"$, as desired.
  \end{proof}

  The fact that $\langle S_\alpha \mid \alpha < \lambda \rangle$ witnesses
  the failure of $\uDSR(1, S^\lambda_\omega)$ in $V[G]$ now follows immediately
  from the previous claim and the definition of $\bb{P}$. It remains to verify
  that $\refl({<}\kappa, S)$ holds. To do this, we need to
  introduce some auxiliary forcing notions. In $V[G]$, for all
  $\beta < \lambda$, let $S_{{\ge}\beta} =
  \bigcup_{\beta \leq \gamma < \lambda} S_\gamma$, and let $\bb{T}_\beta=\bb{T}_{S_{{\geq}\beta}}$
  be the forcing to shoot a club through $\lambda$ disjoint from $S_{\geq \beta}$, introduced in Definition \ref{def:KillingStationarySets}.
%  More precisely, conditions in $\bb{T}_\beta$ are all
%  closed, bounded subsets $t$ of $\lambda$ such that $t \cap S_\beta = \emptyset$,
%  and, given $t_0, t_1 \in \bb{T}_\beta$, $t_1 \leq_{\bb{T}_\beta} t_0$ if
%  $t_1$ end-extends $t_0$, i.e., $t_1 \cap (\max(t_0) + 1) = t_0$.

  \begin{claim}
  \label{dense_closed_claim}
    In $V$, for all $\beta < \lambda$, $\bb{P} * \dot{\bb{T}}_\beta$ has a
    dense $\lambda$-directed closed subset of size $\lambda$.
  \end{claim}

  \begin{proof}
    Fix $\beta < \lambda$. For notational simplicity, let us assume that the name
    $\dot{\bb{T}}_\beta$ is closed under equivalent names, in the sense that it has
    the following property: whenever $p\in\bb{P}$, $\dot{t}$ and $\dot{u}$ are such
    that $(p,\dot{t})\in\bb{P}*\dot{\bb{T}}_\beta$, $\dot{u}\in V^{\bb{P}}\cap H_\lambda$
    and $p\forces_{\bb{P}}\dot{t}=\dot{u}$, it follows that $(p,\dot{u})\in
    \bb{P}*\dot{\bb{T}}_\beta$. (In the future, we will assume without
    comment that names for forcing posets are closed under equivalent names.)
    Let $\bb{U}_\beta$ be the set of conditions in
    $\bb{P}*\dot{\bb{T}}_\beta$ of the form $(p, \check{t})$ with
    $\gamma^p=\max(t)+1>\beta$. We will show that $\bb{U}_\beta$ has the desired properties.

    First, since $\lambda^{<\lambda} = \lambda$, it follows that $\bb{U}_\beta$ has size
    $\lambda$.

    To see that $\bb{U}_\beta$ is dense, fix
    $(p_0, \dot{t}_0) \in \bb{P} * \dot{\bb{T}}_\beta$. Since $\bb{P}$ is
    $\lambda$-strategically closed and hence does not add new bounded subsets
    of $\lambda$, by strengthening $p_0$ if necessary, we can assume that $p_0$
    decides the value of $\dot{t}_0$, and hence we can assume that $\dot{t}_0$
    is of the form $\check{t}_0$, where $t_0 \in V$. We can also assume that
    $\gamma^{p_0} > \max\{\max(t_0), \beta\}$. Now define a condition
    $p \leq p_0$ with $\gamma^p = \gamma^{p_0} + 1$ and
    $p(\alpha, \gamma^{p_0}) = 0$ for all $\alpha < \gamma^p$. Next, let
    $t = t_0 \cup \{\gamma^{p_0}\}$. Then $(p,\check{t})\in\bb{U}_\beta$ extends $(p_0,\dot{t}_0)$, showing that $\bb{U}_\beta$ is dense.

    We next show that $\bb{U}_\beta$ is $\lambda$-directed closed. Note that
    $\bb{U}_\beta$ is \emph{tree-like}, i.e., if $u,v,w \in \bb{U}_\beta$
    and $w$ extends both $u$ and $v$, then $u$ and $v$ are
    $\leq_{\bb{U}_\beta}$-comparable. It thus suffices to show that $\bb{U}_\beta$
    is $\lambda$-closed. To this end, let $\eta < \lambda$ be a limit ordinal,
    and suppose that $\langle (p_\xi, \check{t}_\xi) \mid \xi < \eta \rangle$ is a
    strictly decreasing sequence of conditions from $\bb{U}_\beta$.

    Let $\delta = \sup\{\gamma^{p_\xi} \mid \xi < \eta\}$.
    We begin by defining a condition $p \in \bb{P}$
    extending $\langle p_\xi \mid \xi < \eta \rangle$ with $\gamma^p =
    \delta + 1$. We do this by letting $p \restriction (\delta \times \delta)
    = \bigcup_{\xi < \eta} p_\xi$ and $p(\alpha, \epsilon) = 0$ for all
    $(\alpha, \epsilon) \in (\gamma^p \times \gamma^p) \setminus (\delta \times
    \delta)$. To verify that $p$ is a condition, it suffices to show that, if
    $\cf(\delta) > \omega$, then $\{\alpha < \delta \mid S^p_\alpha \cap \delta
    \text{ is stationary in } \delta\}$ is bounded below $\delta$. In fact,
    this set does not contain any ordinals greater than $\beta$, which
    will yield the desired conclusion since $\delta > \gamma^{p_0} > \beta$.
    To see this, note that, if $\cf(\delta) > \omega$, then $t' =
    \bigcup_{\xi < \eta} t_\xi$ is a club in $\delta$ and, by the definition
    of $\bb{T}_\beta$, it follows that, for all $\alpha$ with
    $\beta \leq \alpha < \delta$, we have $S^p_\alpha \cap t' = \emptyset$,
    so $S^p_\alpha \cap \delta$ is nonstationary in $\delta$.

    Finally, let $t = t' \cup \{\delta\}$. Then $(p,\check{t}) \in \bb{U}_\beta$ is a lower bound for the sequence given at the outset, thus showing that $\bb{U}_\beta$ is $\lambda$-directed closed, which completes the proof of the claim.
  \end{proof}

  Since $\bb{P}$ is $\lambda$-strategically closed in $V$ and, for all
  $\alpha<\lambda$, $\bb{P}*\bb{T}_{\alpha}$ has a $\lambda$-directed closed
  dense subset, it follows that $\bb{T}_\alpha$ is $\lambda$-distributive
  in $V[G]$. In particular,
  Lemma \ref{lem:BasicsOnT_S} applies, showing that the following are equivalent
  in $V[G]$, for a stationary subset $T$ of $S$:
  \begin{itemize}
    \item $\Vdash_{\bb{T}_\alpha} ``\check{T}\text{ is stationary in } \check{\lambda}"$.
    \item $T \setminus S_{\geq \alpha}$ is stationary.
  \end{itemize}
  Also note that, if $\alpha \leq \beta < \lambda$ and $T \setminus S_{\geq \alpha}$
  is stationary, then trivially $T \setminus S_{\geq \beta}$ is also stationary.
  With this in mind, the following claim will be instrumental in proving that
  $\refl({<}\kappa, S)$ holds in $V[G]$.

  \begin{claim} \label{capturing_claim}
    In $V[G]$, for every stationary $T \subseteq S$, there is $\alpha < \lambda$
    such that $T \setminus S_{\geq \alpha}$ is stationary.
  \end{claim}

  \begin{proof}
    Fix a stationary $T \subseteq S$, and suppose to the contrary that, for every
    $\alpha < \lambda$, $T \setminus S_{\geq \alpha}$ is nonstationary. Then,
    for each $\alpha < \lambda$, there is a club $C_\alpha$ in $\lambda$ such that
    $C_\alpha \cap T \subseteq S_{\geq \alpha}$. Let $C = \Delta_{\alpha < \lambda}
    C_\alpha$. Then $C$ is a club in $\lambda$, so we can fix some $\beta \in C
    \cap T$. Then $\beta \in T \cap \bigcap_{\alpha < \beta}S_{\geq \alpha} =
    T \cap S_{\geq \beta}$. But, for all $\gamma$ with $\beta \leq \gamma < \lambda$,
    we have $S_\gamma \cap (\gamma + 1) = \emptyset$, so $\beta \notin
    S_{\geq \beta}$. This is a contradiction.
  \end{proof}

  To see that $\refl({<}\kappa, S)$ holds in $V[G]$, fix $\mu < \kappa$ and a
  sequence $\langle T_\zeta \mid \zeta < \mu \rangle$ of stationary subsets of
  $S$. By Claim \ref{capturing_claim}, for each $\zeta < \mu$, there is
  $\alpha_\zeta < \lambda$ such that $T_\zeta \setminus S_{\geq \alpha_\zeta}$
  is stationary. Let $\beta = \sup\{\alpha_\zeta \mid \zeta < \mu\}$.
  Since $\mu < \kappa \leq \lambda$, we have $\beta < \lambda$, and
  $T_\zeta \setminus S_{\geq \beta}$ is stationary for all $\zeta < \mu$.
  Let $H$ be $\bb{T}_\beta$-generic over $V[G]$.

  By Claim \ref{dense_closed_claim}, in $V$, $\bb{P} * \dot{\bb{T}}_\beta$
  has a dense $\lambda$-directed closed subset of size $\lambda$. Since
  $\refl^*({<}\kappa, S)$ holds in $V$, this implies that $\refl({<}\kappa, S)$
  holds in $V[G * H]$. For all $\zeta < \mu$, since $T_\zeta \setminus S_{\geq \beta}$
  is stationary in $V[G]$, we know that $T_\zeta$ is stationary in $V[G*H]$.
  It follows that, in $V[G * H]$, there is $\delta \in S^\lambda_{> \omega}$
  such that $T_\zeta \cap \delta$ is stationary in $\delta$ for all $\zeta < \mu$.
  Since stationarity is downward absolute, the same holds in $V[G]$,
  so $\langle T_\zeta \mid \zeta < \mu \rangle$ reflects simultaneously in $V[G]$.
  Since this sequence was chosen arbitrarily, it follows that $\refl({<}\kappa, S)$
  holds in $V[G]$.
\end{proof}

We now show that even simultaneous unbounded diagonal reflection does not imply stationary
diagonal reflection. Note that, by Lemma \ref{equivalence_lemma}, the assumption
that $\kappa<\lambda$ in the following theorem is necessary.

\begin{theorem}
\label{unbounded_stationary_thm}
  Suppose that %$\mathsf{GCH}$ holds,
  $\lambda=\lambda^{{<}\lambda}\geq \aleph_2$ is a regular
  cardinal, $\kappa$ is a cardinal with $1 < \kappa < \lambda$, and
  $\uDSR^*({<}\kappa, S^\lambda_\omega)$ holds. Then there is a cofinality-preserving
  forcing extension in which
  \begin{enumerate}
    \item $\uDSR({<}\kappa, S^\lambda_\omega)$ holds;
    \item $\sDSR(1, S^\lambda_\omega)$ fails.
  \end{enumerate}
\end{theorem}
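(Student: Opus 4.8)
The plan is to mirror the proof of Theorem~\ref{thm:SimultaneousReflDoesNotImplyUnbounded}, replacing ``bounded'' by ``nonstationary'' throughout. Put $S=S^\lambda_\omega$ and let $\bb{P}$ be the poset of all functions $p\colon\gamma^p\times\gamma^p\to 2$ with $\gamma^p<\lambda$ such that, writing $S^p_\alpha=\{\eta<\gamma^p\mid p(\alpha,\eta)=1\}$, the $S^p_\alpha$ are pairwise disjoint, $S^p_\alpha\subseteq S\setminus(\alpha+1)$ for each $\alpha$, and for every $\beta\in S^{\gamma^p+1}_{>\omega}$ the set $\{\alpha<\beta\mid S^p_\alpha\cap\beta\text{ is stationary in }\beta\}$ is \emph{nonstationary} in $\beta$; order $\bb{P}$ by end-extension. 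The proofs of Claims~\ref{strat_closed_claim} and~\ref{stat_claim_i} go through essentially verbatim: Player II's strategy of playing all-zero columns at limit stages makes the relevant set \emph{empty} (hence nonstationary), so $\bb{P}$ is $\lambda$-strategically closed and countably closed, $|\bb{P}|=\lambda=\lambda^{<\lambda}$, and in $V[G]$ each $S_\alpha$ (with $f=\bigcup G$) is a stationary subset of $S$. Since $\bb{P}$ is $\lambda$-distributive, stationarity of subsets of any $\gamma<\lambda$ is absolute between $V$ and $V[G]$; reading the defining clause at a $p\in G$ with $\gamma^p>\gamma$ then shows that in $V[G]$, for every $\gamma\in S^\lambda_{>\omega}$ the set $\{\alpha<\gamma\mid S_\alpha\cap\gamma\text{ stationary in }\gamma\}$ is nonstationary in $\gamma$, so $\langle S_\alpha\mid\alpha<\lambda\rangle$ witnesses the failure of $\sDSR(1,S^\lambda_\omega)$ in $V[G]$. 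As $\bb{P}$ is $\lambda$-strategically closed and of size $\lambda=\lambda^{<\lambda}$, $V[G]$ is a cofinality-preserving extension.

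To verify $\uDSR({<}\kappa,S^\lambda_\omega)$ in $V[G]$ I use the standard ``ascend, then transfer down'' strategy. Exactly as in Claim~\ref{dense_closed_claim}, for any $\beta<\lambda$ the poset $\bb{P}*\dot{\bb{T}}_{S_{\geq\beta}}$ has a dense $\lambda$-directed closed subset of size $\lambda$ (at a directed-union limit $\delta$ of uncountable cofinality, the shot club witnesses that $S^p_\alpha\cap\delta$ is nonstationary in $\delta$ for all $\alpha\geq\beta$, so the relevant set is contained in $[0,\beta)$, hence bounded and therefore nonstationary, in $\delta$). Hence $\uDSR^*({<}\kappa,S^\lambda_\omega)$ in $V$ gives $\uDSR({<}\kappa,S^\lambda_\omega)$ in any extension $V[G*H]$ with $H$ being $\bb{T}_{S_{\geq\beta}}$-generic over $V[G]$; and $\bb{T}_{S_{\geq\beta}}$ is $\lambda$-distributive over $V[G]$, so stationarity of a bounded subset of $\lambda$, and thus reflection of a fixed $S_{\alpha,i}$ at a fixed $\gamma<\lambda$, is absolute between $V[G]$ and $V[G*H]$. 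Given a matrix $\langle S_{\alpha,i}\mid\alpha<\lambda,\ i<j_\alpha\rangle$ of stationary subsets of $S$ in $V[G]$, Claim~\ref{capturing_claim} lets me choose, for each $(\alpha,i)$, an ordinal $g(\alpha,i)<\lambda$ with $S_{\alpha,i}\setminus S_{\geq g(\alpha,i)}$ stationary; let $D$ be the club of closure points of $g$. For $\beta\in D$ and any candidate reflection point $\gamma^*\leq\beta$, every entry $S_{\alpha,i}$ with $\alpha<\gamma^*$ survives into $V[G*H_\beta]$ (as then $g(\alpha,i)<\beta$). Applying $\uDSR({<}\kappa,S)$ in $V[G*H_\beta]$, in its ``stationarily many reflection points'' form, to the matrix obtained by replacing every entry that has become nonstationary by a fixed stationary subset of $S$, one obtains a reflection point, which (using $\lambda$-distributivity to transfer down) yields the desired reflection for the original matrix in $V[G]$ as soon as the reflection point can be arranged to lie in $D$ below $\beta$.

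The main obstacle is precisely this last point: unlike the families of fewer than $\kappa$ stationary sets in the proof of Theorem~\ref{thm:SimultaneousReflDoesNotImplyUnbounded}, where a \emph{single} $\bb{T}_{S_{\geq\beta}}$ suffices, our matrix has $\lambda$ entries, so no fixed $\beta$ preserves all of them, and the reflection point provided by $\uDSR$ in $V[G*H_\beta]$ need not lie below $\beta$. I would try to push this through by letting $\beta$ range over a club and reflecting the whole configuration — i.e., finding via a Fodor/closure argument a single $\gamma^*\in D\cap S^\lambda_{>\omega}$ which, for a suitable $\beta\in D$ with $\gamma^*\le\beta$, is simultaneously a reflection point of the corresponding padded matrix; this is where I expect the real work to be. If that cannot be made to work directly, the fallback is to replace $\bb{T}_{S_{\geq\beta}}$ by a suitable iteration or product of such club-shootings (indexed by a club of $\beta$'s), arranged so that its composite with $\bb{P}$ still has a dense $\lambda$-directed closed subset of size $\lambda$ while enough of the matrix survives, and then argue as above.
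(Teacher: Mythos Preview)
Your setup is right: the forcing $\bb{P}$ is exactly the one the paper uses, and your verification that it is $\lambda$-strategically closed, adds stationary $S_\alpha$'s, and produces a counterexample to $\sDSR(1,S^\lambda_\omega)$ is correct. You also correctly observe that $\bb{P}*\dot{\bb{T}}_{S_{\geq\beta}}$ has a dense $\lambda$-directed closed subset.

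The gap is the one you identify yourself, and neither of your proposed fixes is the right one. The Fodor/closure idea does not go through: there is no reason a reflection point produced in $V[G*H_\beta]$ should lie below $\beta$, and iterating over a club of $\beta$'s does not help, since the matrix entries indexed above $\beta$ are simply not preserved. An iteration or product of the $\bb{T}_{S_{\geq\beta}}$'s runs into the same problem --- you cannot preserve all $\lambda$ rows at once while still having a $\lambda$-directed closed dense subset.

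The paper's key idea is to replace the family $\{\bb{T}_{S_{\geq\beta}}\mid\beta<\lambda\}$ by a \emph{single} diagonal club-shooting $\bb{T}$: conditions are closed bounded $t\subseteq\lambda$ such that $t\cap S_\alpha=\emptyset$ for every $\alpha\in t$ (not for every $\alpha$ above a fixed threshold). One checks that $\bb{P}*\dot{\bb{T}}$ has a dense $\lambda$-directed closed subset of size $\lambda$; the point is that at a limit $\delta$ of uncountable cofinality, the union $t_\infty$ of the second coordinates is itself a club in $\delta$ disjoint from $S^p_\alpha$ for every $\alpha\in t_\infty$, so the set of $\alpha<\delta$ with $S^p_\alpha\cap\delta$ stationary is disjoint from $t_\infty$, hence nonstationary --- exactly matching the new requirement on $\bb{P}$. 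The crucial preservation fact is that if $R\subseteq S_\epsilon$ is stationary and $t_0\in\bb{T}$ satisfies $\epsilon\notin t_0$ and $\max(t_0)>\epsilon$, then $t_0\Vdash``\check{R}\text{ is stationary}"$. This lets one run a density argument: for each row $\alpha$, pick $\beta(\alpha,i)$ with $R_{\alpha,i}\cap S_{\beta(\alpha,i)}$ stationary, thin to a stationary $\Sigma^*$ on which the ``shape'' $(j_\alpha,\otp\{\beta(\alpha,i)\})$ is constant, and show that below a suitable condition $t$ one can always extend to $t^*$ which omits all the $\beta(\alpha,i)$'s for some $\alpha\in\Sigma^*$ beyond any prescribed bound. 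Hence in $V[G*H]$ the set $A$ of $\alpha\in\Sigma^*$ with all $R_{\alpha,i}$ still stationary is unbounded; re-indexing the matrix along the enumeration of $A$ and applying $\uDSR({<}\kappa,S)$ there gives the reflection point, which transfers down to $V[G]$ by $\lambda$-distributivity. The diagonal form of $\bb{T}$ is precisely what allows ``skipping over'' the finitely-or-${<}\kappa$-many $\beta(\alpha,i)$'s row by row, which your tail forcings $\bb{T}_{S_{\geq\beta}}$ cannot do.
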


\begin{proof}
  Let $S = S^\lambda_\omega$, and let $\bb{P}$ be the forcing poset whose
  conditions are all functions of the form $p:\gamma^p \times \gamma^p
  \rightarrow 2$ such that
  \begin{itemize}
    \item $\gamma^p < \lambda$;
    \item for each $\alpha < \gamma^p$, letting $S^p_\alpha = \{\eta < \gamma^p
    \mid p(\alpha, \eta) = 1\}$, we have
    \begin{itemize}
      \item $S^p_\alpha \subseteq S \setminus (\alpha + 1)$;
      \item for all $\beta \in S^{\gamma^p+1}_{> \omega}$, the set
      \[
        \{\alpha < \beta \mid S^p_\alpha \cap \beta \text{ is stationary in } \beta\}
      \]
      is nonstationary in $\beta$;
    \end{itemize}
    \item for all $\alpha < \beta < \gamma^p$, we have $S^p_\alpha \cap S^p_\beta = \emptyset$.
  \end{itemize}
  For $p, q \in \bb{P}$, we say that $q \leq_{\bb{P}} p$ if and only if
  $\gamma^q \geq \gamma^p$ and $q \restriction (\gamma^p \times \gamma^p) = p$.

  Let $G$ be $\bb{P}$-generic over $V$ and, for $\alpha < \lambda$, let $S_\alpha
  = \bigcup_{p \in G} S^p_\alpha$. Arguments exactly as in the proofs of Claims
  \ref{strat_closed_claim} and \ref{stat_claim_i} yield the truth of the following
  statements:
  \begin{itemize}
    \item In $V$, $\bb{P}$ is $\lambda$-strategically closed.
    \item In $V[G]$, for every $\alpha < \lambda$, $S_\alpha$ is stationary in
    $\lambda$. Moreover, if we let $S_{-1} = S \setminus (\bigcup_{\alpha < \lambda} S_\alpha)$,
    then $S_{-1}$ is stationary as well.
  \end{itemize}

  In $V[G]$, let $\bb{T}$ be the forcing notion whose conditions are all closed,
  bounded subsets $t$ of $\lambda$ such that, for all $\alpha \in t$, we have
  $t \cap S_\alpha = \emptyset$.
  As before, these conditions are ordered by end-extension.
  %If $t, r \in \bb{T}$, then $r \leq_{\bb{T}} t$
  %if and only if $r \cap (\max(t) + 1) = t$, i.e., $r$ \emph{end-extends} %$t$.
  Let $\dot{\bb{T}} \in V$ be a canonical $\bb{P}$-name for $\bb{T}$.

  \begin{claim} \label{claim_315}
    In $V$, $\bb{P} * \dot{\bb{T}}$ has a dense $\lambda$-directed closed subset
    of size $\lambda$.
  \end{claim}

  \begin{proof}
    Define $\bb{U}$ to consist of all conditions in $\bb{P}*\dot{\bb{T}}$ that are of the form $(p,\check{t})$ with the property that $\gamma^p = \max(t) + 1$.
    Since $\lambda^{<\lambda} = \lambda$, the cardinality of $\bb{U}$ is $\lambda$.

    To see that $\bb{U}$ is dense, fix $(p_0, \dot{t}_0)
    \in \bb{P} * \dot{\bb{T}}$. Since $\bb{P}$ is $\lambda$-strategically closed,
    we can assume, by extending $p_0$ if necessary, that $p_0$ decides $\dot{t}_0$,
    and hence we may assume that $\dot{t}_0$ is of the form $\check{t}_0$. We may also assume that $\gamma^{p_0} > \max(t_0)$. Now define a condition $p \leq_{\bb{P}} p_0$ by letting $\gamma^{p} = \gamma^{p_0} + 1$, $p\restriction(\gamma^{p_0}\times\gamma^{p_0})=p_0$, and
    $p(\alpha, \gamma^{p_0})=p(\gamma^{p_0},\alpha) = 0$ for all
    $\alpha\le\gamma^p$. Next, let $t = t_0 \cup \{\gamma^{p_0}\}$. Then $(p,\check{t})$ is a condition in $\bb{U}$ extending $(p_0,\dot{t}_0)$, showing that $\bb{U}$ is dense.

    We next show that $\bb{U}$ is $\lambda$-directed closed. Since $\bb{U}$ is
    tree-like, it suffices to show $\lambda$-closure. To this end,
    let $\eta < \lambda$ be a limit ordinal, and suppose that $\langle (p_\xi,
    \check{t}_\xi) \mid \xi < \eta \rangle$ is a strictly decreasing sequence of
    conditions from $\bb{U}$. Let
    $\delta = \sup\{\gamma^{p_\xi} \mid \xi < \eta\} = \sup\{\max(t_\xi) \mid
    \xi < \eta\}$.

    We first construct a condition $p \in \bb{P}$ such that $p$ is a lower bound
    for $\langle p_\xi \mid \xi < \eta \rangle$ and $\gamma^p = \delta + 1$. To
    do this, simply let $p \restriction (\delta \times \delta) = \bigcup_{\xi < \eta}
    p_\xi$ and $p(\alpha, \zeta) = 0$ for all $(\alpha, \zeta) \in (\gamma^p \times
    \gamma^p) \setminus (\delta \times \delta)$. The only nontrivial statement to
    check to verify that $p$ is indeed a condition is that, if
    $\cf(\delta) > \omega$, then the set
    \[
      X = \{\alpha < \delta \mid S^p_\alpha \cap \delta \text{ is stationary in } \delta\}
    \]
    is nonstationary in $\delta$. But notice that, if $\cf(\delta) > \omega$,
    then $t_\infty = \bigcup_{\xi < \eta} t_\xi$ is a club in $\delta$ and, for
    all $\alpha \in t_\infty$, we have $t_\infty \cap S^p_\alpha = \emptyset$.
    Therefore, $X$ is disjoint from $t_\infty$ and hence nonstationary in $\delta$. Letting $t = t_\infty \cup\{\delta\}$, it now follows that $(p,\check{t})$ is a condition in $\bb{U}$ that is a lower bound for the given sequence.
  \end{proof}

  $V[G]$ is our desired model. It is clear from what has been written that, in
  $V[G]$, $\langle S_\alpha \mid \alpha < \lambda \rangle$ is a witness to the
  failure of $\sDSR(1, S)$. To verify that $\uDSR({<}\kappa, S)$ holds, the
  following claim will be useful.

  \begin{claim} \label{stat_preserve_claim}
    In $V[G]$, suppose that $-1 \leq \epsilon < \lambda$, $R$ is a stationary
    subset of $S_{\epsilon}$, and $t_0 \in \bb{T}$ is a condition such that
    $\epsilon \notin t_0$ and $\max(t_0) > \epsilon$. Then $t_0 \Vdash_{\bb{T}}
    ``\check{R} \text{ is stationary in } \check{\lambda}"$.
  \end{claim}

  \begin{proof}
    Work in $V[G]$. Let $t \leq_{\bb{T}} t_0$ be arbitrary, and let $\dot{C}$ be a $\bb{T}$-name
    forced by $t$ to be a club in $\lambda$. It will suffice to find $r \leq_{\bb{T}}
    t$ and $\delta \in R$ such that $r \Vdash_{\bb{T}}``\check{\delta} \in \dot{C}"$.

    We will proceed much as in the proof of Lemma \ref{lem:BasicsOnT_S}. Let $\theta$ be a sufficiently large regular cardinal, let $\vartriangleleft$
    be a fixed well-ordering of $H_\theta$, and let $M$ be an elementary
    submodel of $(H_\theta, \in, \vartriangleleft)$ of size less than $\lambda$ such that
    \begin{itemize}
      \item $\{\bb{T}, t, \dot{C}, R\} \subseteq M$;
      \item $\delta := \sup(M \cap \lambda) \in R$.
    \end{itemize}
    Let $\langle \delta_n \mid n < \omega \rangle$ enumerate a cofinal subset of
    $M \cap \lambda$. Now, working at each step inside $M$, recursively
    construct a decreasing sequence of conditions
    $\langle r_n \mid n < \omega \rangle$ from $\bb{T} \cap M$ together with an
    increasing sequence $\langle \xi_n \mid n < \omega \rangle$ of ordinals from
    $M \cap \lambda$ such that
    \begin{itemize}
      \item $r_0 \leq_{\bb{T}} t$;
      \item for each $n < \omega$, we have $\delta_n < \min\{\max(r_n), \xi_n\}$;
      \item for each $n < \omega$, $r_n \Vdash_{\bb{T}}``\check{\xi}_n \in \dot{C}"$.
    \end{itemize}
    The construction is straightforward, using the fact that $t$ forces $\dot{C}$
    to be unbounded in $\lambda$. Notice that $\delta = \sup\{\max(r_n) \mid n < \omega\}
    = \sup\{\xi_n \mid n < \omega\}$. Also recall that $\delta \in R \subseteq S_\epsilon$.
    Since $\epsilon \notin t_0$ and $\max(t_0) > \epsilon$, it follows that
    $r := \{\delta\} \cup \bigcup_{n < \omega} r_n$ is a condition in $\bb{T}$
    that is a lower bound for $\langle r_n \mid n < \omega \rangle$ and hence
    extends $t$. For each $n < \omega$, $r \Vdash_{\bb{T}}``\check{\xi}_n \in \dot{C}"$,
    so, since $r$ forces $\dot{C}$ to be a club in $\lambda$, we see that
    $r \Vdash_{\bb{T}} ``\check{\delta} \in \dot{C} \cap \check{R}"$, as desired.
  \end{proof}

  Note also that, letting $S_{\geq \alpha} = \bigcup_{\alpha \leq \beta < \lambda}
  S_\beta$ for each $\alpha < \lambda$, the proof of Claim \ref{capturing_claim} applies
  in this case as well to yield the fact that, in $V[G]$, for every stationary
  $R \subseteq S$, there is $\alpha < \lambda$ such that $R \setminus S_{\geq \alpha}$
  is stationary. But for such an $\alpha$, we then have
  $R\setminus S_{{\ge}\alpha}\subseteq\bigcup_{-1\le\beta<\alpha}S_\beta$,
  and hence, by the completeness of the nonstationary ideal, there is in fact
  some $-1\le\beta<\alpha$ such that $R\cap S_\beta$ is stationary.

  %But since $S = S_{\geq \alpha} \cup \bigcup_{-1 \leq \beta < \alpha} S_\beta$, the completeness of the nonstationary ideal implies that there is in fact some $-1 \leq \beta < \lambda$ such that $R \cap S_\beta$ is stationary.

  To see that $\uDSR({<}\kappa, S)$ holds in $V[G]$, fix a matrix
  $\langle R_{\alpha, i} \mid \alpha < \lambda, ~ i < j_\alpha \rangle$
  of stationary subsets of $S$ such that, for each $\alpha < \lambda$, we have
  $j_\alpha < \kappa$. We will find $\gamma \in S^\lambda_{>\omega}$ and an
  unbounded $F \subseteq \gamma$ such that, for all $\alpha \in F$ and all
  $i < j_\alpha$, we have that $R_{\alpha, i}$ reflects at $\gamma$.

  For each $\alpha<\lambda$ and $i<j_\alpha$, let $\beta(\alpha, i)<\lambda$ be such
  that $R_{\alpha, i} \cap S_{\beta(\alpha, i)}$ is stationary.
  Set $B_\alpha = \{\beta(\alpha, i) \mid i < j_\alpha\}$, and let
  $\langle\ell^\alpha_\xi\st\xi<\theta_\alpha\rangle$ enumerate $B_\alpha$
  in increasing order. Fix a stationary set $\Sigma^* \subseteq \lambda$ on
  which the map $\alpha \mapsto (j_\alpha, \theta_\alpha)$ is constant,
  with value $(j, \theta)$.

  Let $\rho$ be the least ordinal below $\theta$ such that the set $\{ \ell^\alpha_\rho
  \mid \alpha \in \Sigma^* \}$ is unbounded in $\lambda$, if such an ordinal
  exists, and let $\rho = \theta$ otherwise. For all $\xi < \rho$, then, there
  is $\beta_\xi < \lambda$ such that $\ell^\alpha_\xi < \beta_\xi$ for all
  $\alpha \in \Sigma^*$. Let $\varepsilon = \sup_{\xi < \rho}
  \beta_\xi < \lambda$, and let $t = \{\varepsilon\} \in \bb{T}$.

  \begin{claim} \label{density_claim}
    For all $\zeta < \lambda$, the set
    \[
      D_\zeta := \left\{t^* \in \bb{T} \mid \exists \alpha \in \Sigma^* \setminus \zeta ~
      \left[\forall i < j \left(t^* \Vdash_{\bb{T}} ``\check{R}_{\alpha, i}
      \text{ is stationary in } \check{\lambda}"\right)\right]\right\}
    \]
    is dense in $\bb{T}$ below $t$.
  \end{claim}

  \begin{proof}
    Fix $\zeta < \lambda$ and $t' \leq_{\bb{T}} t$. Fix $\alpha \in \Sigma^*$ such
    that $\alpha \geq \zeta$ and, if $\rho < \theta$, then $\ell^\alpha_\rho >
    \max(t')$. Now find $\gamma > \max(t')$ with
    $\sup\{\ell^\alpha_\xi \mid \xi < \theta\} < \gamma < \lambda$ and
    $\gamma \in S_{-1}$, and let
    $t^* = t' \cup \{\gamma\}$. Then $t^*$ is a condition in $\bb{T}$ extending
    $t'$, and we claim that, for all $i < j$, $t^* \Vdash_{\bb{T}}``\check{R}_{\alpha, i}
    \text{ is stationary in }\check{\lambda}"$, and hence $t^* \in D_\zeta$.

    To see this, fix $i < j$, and let $\xi < \theta$ be such that $\beta(\alpha, i)
    = \ell^\alpha_\xi$. If $\xi < \rho$, then $\beta(\alpha, i) < \varepsilon =
    \min(t^*)$. Therefore, since $R_{\alpha, i} \cap S_{\beta(\alpha, i)}$ is stationary,
    Claim \ref{stat_preserve_claim} implies that $t^*$ preserves the stationarity
    of $R_{\alpha, i} \cap S_{\beta(\alpha, i)}$ and hence, \emph{a fortiori},
    of $R_{\alpha, i}$ itself. If, on the other hand, $\rho \leq \xi < \theta$,
    then we have
    \[
      \max(t') < \ell^\alpha_\rho \leq \ell^\alpha_\xi = \beta(\alpha, i)
      < \gamma = \min(t^* \setminus (\max(t') + 1)).
    \]
    In particular, we have $\beta(\alpha, i) \notin t^*$ and $\beta(\alpha, i)
    < \max(t^*)$, so Claim \ref{stat_preserve_claim} again implies that
    $t^*$ preserves the stationarity of $R_{\alpha, i}$.
  \end{proof}

  Let $H$ be $\bb{T}$-generic over $V[G]$ with $t\in H$. Claim \ref{density_claim}
  implies that, in $V[G*H]$, the set
  \[
  A := \{\alpha \in \Sigma^* \mid \text{for all } i < j, ~ R_{\alpha, i} \text{ is stationary in } \lambda\}
  \]
  is unbounded in $\lambda$. Let $f:\lambda\longrightarrow A$ be the monotone enumeration of $A$, and let $C$ be the set of closure points of $f$, that is, the set of $\eta<\lambda$ such that $f``\eta\subseteq\eta$. Note that $C$ is a club in $\lambda$. Note also that since, in $V$, $\uDSR^*({<}\kappa, S)$
  holds and $\bb{P} * \dot{\bb{T}}$ has a dense $\lambda$-directed closed subset
  of cardinality $\lambda$, it follows that $\uDSR({<}\kappa, S)$ holds in
  $V[G * H]$. Therefore, we can apply this principle to the matrix $\langle
  R'_{\xi,i}\st\xi<\lambda, ~ i<j\rangle$ defined by $R'_{\xi,i}=R_{f(\xi),i}\cap C$,
  yielding an ordinal $\gamma\in S^\lambda_{>\omega}$ and an unbounded set $F'\subseteq\gamma$ such that, for all $\xi \in F'$ and all $i < j$, $R'_{\xi, i}$ reflects at $\gamma$, i.e., $R_{f(\xi), i}\cap C$ reflects at $\gamma$.
  In particular, $\gamma\in C$, and hence $F = f``F'$ is an unbounded subset of $\gamma$. Moreover, since $V[G]$ and $V[G*H]$ have the same bounded
  subsets of $\lambda$, we also know that $F \in V[G]$ and, in $V[G]$, for all
  $\alpha \in F$ and all $i < j$, $R_{\alpha, i} \cap \gamma$ is stationary in
  $\gamma$. We have thus found $\gamma$ and $F$ as desired, so we have shown
  that $\uDSR({<}\kappa, S)$ holds in $V[G]$.
\end{proof}

We now show that stationary diagonal reflection implies no more simultaneous reflection than explicitly stated. Recall that this is not true of the full diagonal reflection principle, by Lemma \ref{dsr_implies_simultaneous}.

\begin{theorem}
\label{stationary_simultaneous_thm}
  Suppose that %$\mathsf{GCH}$ holds,
  $1 < \kappa < \lambda=\lambda^{{<}\lambda}$ are cardinals, $\lambda \geq \aleph_2$ is regular, and either $\kappa\ge\omega$ and $\DSR^*({<}\kappa, S^\lambda_\omega)$ holds, or $\kappa<\omega$ and $\DSR^*(\kappa,S^\lambda_\omega)$ holds. Then there is a cofinality-preserving forcing extension in which
  \begin{enumerate}
    \item $\sDSR({<}\kappa, S^\lambda_\omega)$ holds;
    \item $\refl(\kappa, S^\lambda_\omega)$ fails.
  \end{enumerate}
\end{theorem}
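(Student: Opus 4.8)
The plan is to follow the blueprint from the beginning of Section~\ref{sec:Separation}. Write $S = S^\lambda_\omega$. Let $\bb{P}$ be the poset of all functions $p\colon\gamma^p\to\kappa\cup\{-1\}$ with $\gamma^p<\lambda$ such that, setting $S^p_\zeta=\{\alpha<\gamma^p\mid p(\alpha)=\zeta\}$ for $\zeta<\kappa$, we have $S^p_\zeta\subseteq S$ for every $\zeta<\kappa$ and, for every $\beta\in S^{\gamma^p+1}_{>\omega}$, there is some $\zeta<\kappa$ with $S^p_\zeta\cap\beta$ nonstationary in $\beta$; order $\bb{P}$ by $q\le_{\bb{P}}p$ iff $\gamma^q\ge\gamma^p$ and $q\restriction\gamma^p=p$. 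As in Claim~\ref{strat_closed_claim}, Player II wins $\Game_\lambda(\bb{P})$ by always increasing $\gamma^p$ at her successor turns and by playing, at each limit stage $\eta$, the condition of length $\delta_\eta+1$ with $\delta_\eta=\sup\{\gamma^{p_\xi}\mid\xi<\eta\}$ coloured $-1$; this keeps every colour class disjoint from the club $D$ of previous top ordinals, so every colour class is nonstationary below each $\delta_\eta$ of uncountable cofinality, and the played condition is legal. Hence $\bb{P}$ is $\lambda$-strategically closed, and since $|\bb{P}|=\lambda^{<\lambda}=\lambda$, forcing with $\bb{P}$ preserves cofinalities. Let $G$ be $\bb{P}$-generic, $V_1=V[G]$, $T_\zeta=\bigcup_{p\in G}S^p_\zeta$ for $\zeta<\kappa$, and $T_{-1}=S\setminus\bigcup_{\zeta<\kappa}T_\zeta$. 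A density argument as in Claim~\ref{stat_claim_i} (build an $\omega$-chain meeting a given club, take its supremum $\eta$, which has cofinality $\omega$ hence lies in $S$, and colour $\eta$ with $\zeta$, respectively $-1$) shows that each $T_\zeta$ ($\zeta<\kappa$) and $T_{-1}$ is a stationary subset of $S$ in $V_1$. Finally, for any $\gamma\in S^\lambda_{>\omega}$, choosing $p\in G$ with $\gamma^p>\gamma$ we have $T_\zeta\cap\gamma=S^p_\zeta\cap\gamma$ for all $\zeta<\kappa$ (colours below $\gamma^p$ are frozen under $\le_{\bb{P}}$), and the defining property of $p$ at $\beta=\gamma$ gives a $\zeta<\kappa$ with $T_\zeta\cap\gamma$ nonstationary in $\gamma$; so $\langle T_\zeta\mid\zeta<\kappa\rangle$ is a family of $\kappa$-many stationary subsets of $S$ with no simultaneous reflection point, and $\refl(\kappa,S)$ fails in $V_1$. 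This gives clause~(2).

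For clause~(1), fix $\zeta_0<\kappa$ and let $\bb{T}_{\zeta_0}$ be the forcing of Definition~\ref{def:KillingStationarySets} shooting a club through $\lambda\setminus T_{\zeta_0}$. Exactly as in Claims~\ref{dense_closed_claim} and~\ref{claim_315}, the set $\bb{U}_{\zeta_0}$ of those $(p,\check t)\in\bb{P}*\dot{\bb{T}}_{\zeta_0}$ with $\gamma^p=\max(t)+1$ (for which membership in $\dot{\bb{T}}_{\zeta_0}$ reduces to $t\cap S^p_{\zeta_0}=\emptyset$) is a dense $\lambda$-directed closed subset of $\bb{P}*\dot{\bb{T}}_{\zeta_0}$ of size $\lambda$: density uses that $\bb{P}$ is $\lambda$-strategically closed, and $\lambda$-directed closure follows from tree-likeness together with the fact that at a limit $\delta$ of uncountable cofinality the union of the $\bb{T}$-parts is a club in $\delta$ disjoint from $S^p_{\zeta_0}\cap\delta$, so the colour $\zeta_0$ still witnesses the defining property of the amalgamated condition. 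Hence $\bb{T}_{\zeta_0}$ is $\lambda$-distributive over $V_1$, and, by the indestructibility hypothesis, $\DSR({<}\kappa,S)$ --- or $\DSR(\kappa,S)$ when $\kappa<\omega$ --- holds in $V[G*H]$ whenever $H$ is $\bb{T}_{\zeta_0}$-generic over $V_1$. We will also use the instance of Lemma~\ref{lem:BasicsOnT_S} saying that a stationary $R\subseteq S$ stays stationary after $\bb{T}_{\zeta_0}$ exactly when $R\setminus T_{\zeta_0}$ is stationary.

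The combinatorial core of the argument, which I expect to be the main obstacle, is the choice of $\zeta_0$. Let $\langle R_{\alpha,i}\mid\alpha<\lambda,\ i<j_\alpha\rangle$ be a matrix of stationary subsets of $S$ in $V_1$ with $j_\alpha<\kappa$. For $\zeta<\kappa$ set $F_\zeta=\{\alpha<\lambda\mid R_{\alpha,i}\setminus T_\zeta\text{ is stationary for all }i<j_\alpha\}$ and $\hat F_\zeta=F_\zeta\cap(S\setminus T_\zeta)$. I claim some $\hat F_{\zeta_0}$ with $\zeta_0<\kappa$ is stationary. If not, fix for each $\zeta<\kappa$ a club $C_\zeta$ disjoint from $\hat F_\zeta$, so that every $\alpha\in C_\zeta\cap S$ either lies in $T_\zeta$ or satisfies $R_{\alpha,i}\setminus T_\zeta$ nonstationary for some $i<j_\alpha$. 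Since $\kappa<\lambda=\cf(\lambda)$ the set $\bigcap_{\zeta<\kappa}C_\zeta$ is a club, and $T_{-1}$ is stationary, so pick $\alpha\in\bigcap_{\zeta<\kappa}C_\zeta\cap T_{-1}$. As $\alpha\in T_{-1}$, $\alpha$ lies in no $T_\zeta$, so for each $\zeta<\kappa$ there is $i_\zeta<j_\alpha$ with $R_{\alpha,i_\zeta}\setminus T_\zeta$ nonstationary; and $\zeta\mapsto i_\zeta$ is injective, for if $R_{\alpha,i}\setminus T_\zeta$ and $R_{\alpha,i}\setminus T_{\zeta'}$ were both nonstationary with $\zeta\neq\zeta'$ then $R_{\alpha,i}=R_{\alpha,i}\setminus(T_\zeta\cap T_{\zeta'})$ would be nonstationary, contradicting stationarity of $R_{\alpha,i}$. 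Thus $\kappa$ injects into $j_\alpha<\kappa$, which is absurd. It is precisely here that the stationarity of the leftover set $T_{-1}$ is used, and this makes the argument go through uniformly for finite and infinite $\kappa$.

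Fix $\zeta_0$ with $\hat F:=\hat F_{\zeta_0}$ stationary; note $\hat F\subseteq S$, $\hat F\cap T_{\zeta_0}=\emptyset$, and $\hat F\subseteq F_{\zeta_0}$. Let $H$ be $\bb{T}_{\zeta_0}$-generic over $V_1$ and $V_2=V[G*H]$. By the criterion above, $\hat F$ remains stationary in $V_2$ (since $\hat F\setminus T_{\zeta_0}=\hat F$), and each $R_{\alpha,i}$ with $\alpha\in\hat F$, $i<j_\alpha$, remains stationary in $V_2$. In $V_2$ define a matrix $\langle R'_{\alpha,i}\mid\alpha<\lambda,\ i<j'_\alpha\rangle$ by setting, for $\alpha\in\hat F$, $j'_\alpha=j_\alpha+1$, $R'_{\alpha,i}=R_{\alpha,i}$ for $i<j_\alpha$, and $R'_{\alpha,j_\alpha}=\hat F$, and, for $\alpha\notin\hat F$, $j'_\alpha=1$ and $R'_{\alpha,0}=\hat F$. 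All entries are stationary subsets of $S$ in $V_2$, and $j'_\alpha\le j_\alpha+1$, which is $<\kappa$ when $\kappa\ge\omega$ and $\le\kappa$ when $\kappa<\omega$, so $\DSR({<}\kappa,S)$ (resp.\ $\DSR(\kappa,S)$) applies and yields $\gamma\in S^\lambda_{>\omega}$ and a club $F'\subseteq\gamma$ with $R'_{\alpha,i}$ reflecting at $\gamma$ for all $\alpha\in F'$ and $i<j'_\alpha$. Taking any $\alpha\in F'$ and the coordinate of $\hat F$ in its row, $\hat F\cap\gamma$ is stationary in $\gamma$, so $F:=F'\cap\hat F$ is stationary in $\gamma$, and for $\alpha\in F$ and $i<j_\alpha$ we have $R_{\alpha,i}=R'_{\alpha,i}$ reflecting at $\gamma$, i.e.\ $R_{\alpha,i}\cap\gamma$ is stationary in $\gamma$. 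Since $\bb{T}_{\zeta_0}$ is $\lambda$-distributive over $V_1$, it preserves cofinalities below $\lambda$ and the regularity of $\lambda$, so $\gamma\in S^\lambda_{>\omega}$ in $V_1$; moreover $V_1$ and $V_2$ have the same bounded subsets of $\lambda$, so $F\in V_1$, and closedness, stationarity in $\gamma$, and ``$R_{\alpha,i}\cap\gamma$ is stationary in $\gamma$'' are downward absolute. Hence $\gamma$ and $F$ witness this instance of $\sDSR({<}\kappa,S)$ in $V_1$, and since the matrix was arbitrary, $\sDSR({<}\kappa,S)$ holds in $V_1$, giving clause~(1).
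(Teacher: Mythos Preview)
Your proof is correct and follows essentially the same approach as the paper: the forcing $\bb{P}$ is the same poset (just coded as a coloring $p\colon\gamma^p\to\kappa\cup\{-1\}$ rather than via indicator functions $p\colon\kappa\times\gamma^p\to 2$), the auxiliary club-shooting posets $\bb{T}_{\zeta_0}$ and the dense $\lambda$-directed closed subsets are identical, and the pull-back argument is the same. The only cosmetic difference is in selecting $\zeta_0$: the paper first picks for each $\alpha$ some $\ell(\alpha)$ that is good for all $i<j_\alpha$ (using that at most $j_\alpha<\kappa$ many $\ell$'s can be bad) and then stabilizes $\ell(\alpha)$ on a stationary $\Sigma\subseteq S_{-1}$ by pigeonhole, whereas you argue by contradiction that some $\hat F_{\zeta_0}$ is stationary --- both rest on the same injectivity observation that a stationary set can have nonstationary difference with at most one $T_\zeta$.
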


\begin{proof}
  Let $S = S^\lambda_\omega$, and let $\bb{P}$ be the forcing poset whose conditions
  are all functions of the form $p:\kappa \times \gamma^p \rightarrow 2$ such that
  \begin{itemize}
    \item $\gamma^p < \lambda$;
    \item for each $\ell < \kappa$, letting $S^p_\ell = \{\eta < \gamma^p \mid
    p(\ell, \eta) = 1\}$, we have
    \begin{itemize}
      \item $S^p_\ell \subseteq S$;
      \item for all $\beta \in S^{\gamma^p + 1}_{>\omega}$, there is $\ell < \kappa$
      such that $S^p_\ell \cap \beta$ is nonstationary in $\beta$;
    \end{itemize}
    \item for all $\ell < \ell^* < \kappa$, we have $S^p_\ell \cap S^p_{\ell^*}
    = \emptyset$.
  \end{itemize}
  For $p, q \in \bb{P}$, we say that $q \leq_{\bb{P}} p$ if and only if
  $\gamma^q \geq \gamma^p$ and $q \restriction (\kappa \times \gamma^p) = p$.

  Let $G$ be $\bb{P}$-generic over $V$ and, for $\ell < \kappa$, let
  $S_\ell = \bigcup_{p \in G} S^p_\ell$.
  By standard arguments analogous to those in the proofs of Claims \ref{strat_closed_claim}
  and \ref{stat_claim_i}, it follows that
  \begin{itemize}
    \item in $V$, $\bb{P}$ is $\lambda$-strategically closed;
    \item in $V[G]$, for every $\ell < \kappa$, $S_\ell$ is stationary in $\lambda$.
    Moreover, if we let $S_{-1} = S \setminus (\bigcup_{\ell < \kappa} S_\ell)$,
    then $S_{-1}$ is stationary as well.
  \end{itemize}

  In $V[G]$, for each $\ell < \kappa$, let $\bb{T}_\ell=\bb{T}_{S_\ell}$ be the forcing notion
  to shoot a club through $\lambda$ disjoint from $S_\ell$ introduced in Definition \ref{def:KillingStationarySets}.
  % More precisely, conditions
  %in $\bb{T}_\ell$ are all closed, bounded subsets $t$ of $\lambda$ such that
  %$t \cap S_\ell = \emptyset$, ordered by end-extension.
  Let $\dot{\bb{T}}_\ell \in V$ be a canonical $\bb{P}$-name for $\bb{T}_\ell$. By arguments like those in
  the proof of Claim \ref{dense_closed_claim}, in $V$, $\bb{P} * \dot{\bb{T}}_\ell$
  has a dense $\lambda$-directed closed subset of cardinality $\lambda$.

  $V[G]$ is our desired model. It follows from the previous two paragraphs that
  the sequence $\langle S_\ell \mid \ell < \kappa \rangle$ witnesses the failure
  of $\refl(\kappa, S)$ in $V[G]$. To see that $\sDSR({<}\kappa, S)$ holds, let
  $\langle T_{\alpha, i} \mid \alpha < \lambda, ~ i < j_\alpha \rangle$ be a
  matrix of stationary subsets of $S$, with $j_\alpha < \kappa$ for all
  $\alpha < \lambda$. Notice that, for every stationary subset $T \subseteq S$,
  there is at most one $\ell < \kappa$ for which $T \setminus S_\ell$ is
  nonstationary. Therefore, for all $\alpha < \lambda$, there is some
  $\ell(\alpha) < \kappa$ such that, for all $i < j_\alpha$, we have that
  $T_{\alpha, i} \setminus S_{\ell(\alpha)}$ is stationary. In particular, by Lemma \ref{lem:BasicsOnT_S}, each $T_{\alpha, i}$ remains stationary after forcing over $V[G]$ with $\bb{T}_{\ell(\alpha)}$.

  Find a stationary $\Sigma \subseteq S_{-1}$ and a fixed $\ell < \kappa$ such
  that $\ell(\alpha) = \ell$ for all $\alpha \in \Sigma$. Let $H$ be
  $\bb{T}_\ell$-generic over $V[G]$. For all $\alpha \in \Sigma$ and
  $i < j_\alpha$, $T_{\alpha, i}$ remains stationary in $V[G * H]$. Moreover,
  since $\Sigma$ is a subset of $S_{-1}$ and hence disjoint from $S_\ell$,
  $\Sigma$ also remains stationary in $V[G*H]$. Since, in $V$, either $\kappa\ge\omega$ and $\DSR^*({<}\kappa, S)$ holds, or $\kappa<\omega$ and $\DSR^*(\kappa,S)$ holds, and since
  $\bb{P} * \dot{\bb{T}}_\ell$ has a dense $\lambda$-directed closed
  subset of cardinality $\lambda$, it follows that the corresponding diagonal reflection principle holds in $V[G * H]$.

  In $V[G*H]$, form a matrix $\langle T^*_{\alpha, i} \mid \alpha < \lambda, ~
  i < j^*_\alpha \rangle$ as follows. For $\alpha \in \Sigma$, let
  $j^*_\alpha = j_\alpha + 1$, let $T^*_{\alpha, i} = T_{\alpha, i}$ for
  $i < j_\alpha$, and let $T_{\alpha, j_\alpha} = \Sigma$. For
  $\alpha \in \lambda \setminus \Sigma$, let $j^*_\alpha = 1$ and
  $T_{\alpha, 0} = \Sigma$. We can apply our diagonal reflection principle to this
  matrix in $V[G * H]$, finding $\gamma \in
  S^\lambda_{> \omega}$ and a club $F^* \subseteq \gamma$ such that, for all
  $\alpha \in F^*$ and all $i < j^*_\alpha$, we have that $T^*_{\alpha, i}$
  reflects at $\gamma$. Since $\Sigma \in \{T^*_{\alpha, i}
  \mid i < j^*_\alpha\}$ for every $\alpha < \lambda$, it follows that
  $\Sigma$ reflects at $\gamma$, and hence $F := \Sigma \cap F^*$
  is stationary in $\gamma$ as well. Moreover, for each $\alpha \in F$ and
  each $i < j_\alpha$, we know that $T_{\alpha, i} = T^*_{\alpha, i}$
  reflects at $\gamma$. This is downward absolute to
  $V[G]$ and, since $V[G]$ and $V[G*H]$ have the same bounded subsets of
  $\lambda$, we know that $F$ is in $V[G]$. Therefore, in $V[G]$, $\gamma$
  and $F$ witness this instance of $\sDSR({<}\kappa, S)$.
\end{proof}

\begin{corollary}
\label{cor:sDSRfiniteDoesNotImplyDSR1}
  Suppose that %$\mathsf{GCH}$ holds,
  $\lambda=\lambda^{{<}\lambda} \geq \aleph_2$ is a regular
  cardinal, and $\DSR^*({<} \omega, S^\lambda_\omega)$ holds. Then there is a cofinality-preserving
  extension in which
  \begin{enumerate}
    \item $\sDSR({<}\omega, S^\lambda_\omega)$ holds;
    \item $\DSR(1, S^\lambda_\omega)$ fails.
  \end{enumerate}
\end{corollary}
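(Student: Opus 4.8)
The plan is to derive the corollary directly from Theorem~\ref{stationary_simultaneous_thm}, applied with $\kappa=\omega$, together with Lemma~\ref{dsr_implies_simultaneous}. First I would check that the hypotheses match up: since $\lambda\geq\aleph_2$ is regular and $\lambda=\lambda^{{<}\lambda}$, we have $1<\omega<\lambda$, and since $\omega\geq\omega$ the relevant instance of the hypothesis of Theorem~\ref{stationary_simultaneous_thm} is exactly $\DSR^*({<}\omega,S^\lambda_\omega)$, which we are assuming. The theorem then yields a cofinality-preserving forcing extension $V[G]$ in which $\sDSR({<}\omega,S^\lambda_\omega)$ holds and $\refl(\omega,S^\lambda_\omega)$ fails, establishing clause~(1).

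For clause~(2), it remains to turn the failure of $\refl(\omega,S^\lambda_\omega)$ into the failure of $\DSR(1,S^\lambda_\omega)$. This is immediate from the contrapositive of Lemma~\ref{dsr_implies_simultaneous}: that lemma shows that $\DSR(1,S)$ implies $\refl(\omega,S)$ whenever $S$ is a stationary subset of a regular uncountable cardinal, so in $V[G]$, where $\refl(\omega,S^\lambda_\omega)$ fails, $\DSR(1,S^\lambda_\omega)$ must fail as well. Thus $V[G]$ is the desired model.

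There is no real obstacle here; the only conceptual point is recognizing that the negative clause~(2) of Theorem~\ref{stationary_simultaneous_thm} for $\kappa=\omega$, namely the failure of $\refl(\omega,S^\lambda_\omega)$, is already enough to preclude $\DSR(1,S^\lambda_\omega)$ via Lemma~\ref{dsr_implies_simultaneous}. This is precisely a manifestation of the optimality of Lemma~\ref{dsr_implies_simultaneous} already noted above: $\sDSR({<}\omega,S^\lambda_\omega)$ does not even imply $\DSR(1,S^\lambda_\omega)$.
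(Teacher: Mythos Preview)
Your proposal is correct and follows exactly the same approach as the paper's proof, which simply cites Theorem~\ref{stationary_simultaneous_thm} (with $\kappa=\omega$) and Lemma~\ref{dsr_implies_simultaneous}. Your explicit verification that the hypotheses of Theorem~\ref{stationary_simultaneous_thm} are met in the case $\kappa=\omega$ is a helpful elaboration of what the paper leaves implicit.
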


\begin{proof}
  This follows immediately from Theorem \ref{stationary_simultaneous_thm} and
  Lemma \ref{dsr_implies_simultaneous}.
\end{proof}

We now prove a more general version of the above corollary, indicating that
simultaneous stationary diagonal reflection does not imply any amount of the
full diagonal stationary reflection principle. Recall that, by the discussion
in Subsection \ref{indestructible_subsection}, the hypothesis of the following
theorem can be arranged from a weakly compact cardinal if $\lambda$ is to be
inaccessible or the successor of a regular cardinal, or from a supercompact
cardinal if $\lambda$ is to be the successor of a singular cardinal.

\begin{theorem} \label{thm:sDSRDoesNotImplyDSR}
  Suppose that $\kappa < \lambda$ are infinite cardinals such that
  $\kappa^+ < \lambda$ and $\lambda^{{<}\lambda} = \lambda$. Suppose
  moreover that there is a stationary subset
  $\Sigma \subseteq S^\lambda_{\geq \kappa}$ such that
   $\DSR^*({<} \kappa, S^\lambda_\omega \cup \Sigma)$
  holds. Then there is a cofinality-preserving forcing extension in which
  \begin{enumerate}
    \item $\sDSR({<}\kappa, S^\lambda_\omega)$ holds;
    \item $\DSR(1, S^\lambda_\omega)$ fails.
  \end{enumerate}
\end{theorem}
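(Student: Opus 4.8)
The plan is to follow the template of Theorems~\ref{unbounded_stationary_thm} and~\ref{stationary_simultaneous_thm}: over $V$ we force to add a counterexample to $\DSR(1,S^\lambda_\omega)$, and to verify $\sDSR({<}\kappa,S^\lambda_\omega)$ in the extension we pass to a further extension in which the indestructible principle $\DSR^*({<}\kappa,S^\lambda_\omega\cup\Sigma)$ is preserved, and then transfer down. Writing $S=S^\lambda_\omega$, let $\bb{P}$ consist of all $p:\gamma^p\times\gamma^p\to 2$ with $\gamma^p<\lambda$ such that, setting $S^p_\alpha=\{\eta<\gamma^p\mid p(\alpha,\eta)=1\}$: each $S^p_\alpha\subseteq S\setminus(\alpha+1)$; the $S^p_\alpha$ are pairwise disjoint; and for every $\beta\in S^{\gamma^p+1}_{>\omega}$ the set $\{\alpha<\beta\mid S^p_\alpha\cap\beta\text{ is nonstationary in }\beta\}$ is \emph{stationary} in $\beta$; order $\bb{P}$ by $q\leq p$ iff $\gamma^q\geq\gamma^p$ and $q\restriction(\gamma^p\times\gamma^p)=p$. (The new third clause is exactly what will make $\langle S_\alpha\rangle$ fail to reflect on a club; compare the posets in the two theorems cited.) As in Claim~\ref{strat_closed_claim}, $\bb{P}$ is $\lambda$-strategically closed, via the same strategy (at a limit stage, keep every $S_\alpha$ disjoint from the club $D$ of previously reached heights, which in fact forces the third clause at every new $\beta$ of uncountable cofinality); hence $\bb{P}$ is $\lambda$-distributive, countably closed, and of size $\lambda$, so $V[G]$ is cofinality-preserving. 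As in Claim~\ref{stat_claim_i}, in $V[G]$ each $S_\alpha:=\bigcup_{p\in G}S^p_\alpha$ is a stationary subset of $S$ and $S_{-1}:=S\setminus\bigcup_{\alpha<\lambda}S_\alpha$ is stationary. Since bounded subsets of $\lambda$ are unchanged, $\langle S_\alpha\mid\alpha<\lambda\rangle$ witnesses the failure of $\DSR(1,S)$ in $V[G]$: given $\gamma\in S^\lambda_{>\omega}$ and a club $F\subseteq\gamma$, pick $p\in G$ with $\gamma^p\geq\gamma$; by the third clause $\{\alpha<\gamma\mid S^p_\alpha\cap\gamma\text{ nonstationary in }\gamma\}$ is stationary in $\gamma$, hence meets $F$, and $S_\alpha\cap\gamma=S^p_\alpha\cap\gamma$ for such $\alpha$.

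In $V[G]$ let $\bb{T}$ be the poset of closed bounded $t\subseteq\lambda$ with $t\cap S_\alpha=\emptyset$ for all $\alpha\in t$, ordered by end-extension, exactly as in Theorem~\ref{unbounded_stationary_thm}, and let $\dot{\bb{T}}\in V$ be a name for it. The proof of Claim~\ref{claim_315} applies verbatim to show $\bb{P}*\dot{\bb{T}}$ has a dense $\lambda$-directed closed subset of size $\lambda$: at a limit $\delta$ of uncountable cofinality, $t_\infty=\bigcup_{\xi}t_\xi$ is a club in $\delta$ contained in $\{\alpha<\delta\mid S^p_\alpha\cap\delta\text{ nonstationary in }\delta\}$, which gives the third clause at $\delta$. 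Hence $\bb{T}$ is $\lambda$-distributive in $V[G]$, and since $\DSR^*({<}\kappa,S^\lambda_\omega\cup\Sigma)$ holds in $V$ (and $\Sigma$ remains stationary in $V[G]$ and $V[G*H]$), $\DSR({<}\kappa,S^\lambda_\omega\cup\Sigma)$ holds in $V[G*H]$ for $\bb{T}$-generic $H$. As in Claims~\ref{capturing_claim} and~\ref{stat_preserve_claim}, in $V[G]$ every stationary $R\subseteq S$ has $R\cap S_\epsilon$ stationary for some $\epsilon\in\{-1\}\cup\lambda$ (take $\epsilon=-1$ whenever $R\cap S_{-1}$ is stationary), and if $R\cap S_\epsilon$ is stationary and $t_0\in\bb{T}$ has $\epsilon\notin t_0$, $\max(t_0)>\epsilon$ (both automatic when $\epsilon=-1$), then $t_0\Vdash_{\bb{T}}$``$R$ is stationary''.

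Now fix a matrix $\langle R_{\alpha,i}\mid\alpha<\lambda,\ i<j_\alpha\rangle$ of stationary subsets of $S$ with $j_\alpha<\kappa$. Choose $\epsilon(\alpha,i)$ with $R_{\alpha,i}\cap S_{\epsilon(\alpha,i)}$ stationary (taking $-1$ when possible), put $B_\alpha=\{\epsilon(\alpha,i)\mid i<j_\alpha\}$, and fix a stationary $\Sigma^*\subseteq\Sigma$ on which $(j_\alpha,\otp(B_\alpha))$ is constant, say $(j,\theta)$, with $\theta<\kappa$; as in Theorem~\ref{unbounded_stationary_thm} let $\langle\ell^\alpha_\xi\mid\xi<\theta\rangle$ enumerate $B_\alpha$. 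I would then run an \emph{iterated pin-down}: at each step let $\rho<\theta$ be least such that $\alpha\mapsto\ell^\alpha_\rho$ is unbounded on the current stationary set $\Sigma'$; if $\{\ell^\alpha_\rho\mid\alpha\in\Sigma'\cap E\}$ is unbounded in $\lambda$ for every club $E$, stop; otherwise the negation of ``for every $\beta$, $\{\alpha\in\Sigma'\mid\ell^\alpha_\rho>\beta\}$ is stationary'' holds, so fix $\beta$ with $\{\alpha\in\Sigma'\mid\ell^\alpha_\rho>\beta\}$ \emph{non}stationary and pass to $\Sigma'$ minus that set, on which the least unbounded coordinate has strictly increased. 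Since $\theta<\kappa<\lambda$, the iteration has length $<\kappa$, and at limit stages the current set differs from $\Sigma^*$ by a union of $<\lambda$-many nonstationary sets, hence stays stationary by $\lambda$-completeness of the nonstationary ideal on $\lambda$. It ends with a stationary $\Sigma^{**}\subseteq\Sigma$ and either a ``high'' coordinate $\rho$ with the displayed club-unboundedness, or with all coordinates bounded on $\Sigma^{**}$; letting $\varepsilon<\lambda$ be the supremum of the bounds for the coordinates below $\rho$ and $t=\{\varepsilon+1\}\in\bb{T}$, a density argument like Claim~\ref{density_claim} shows that for every club $E\in V[G]$ the set of $t^*\leq_{\bb{T}}t$ admitting some $\alpha\in\Sigma^{**}\cap E$ with $t^*\Vdash_{\bb{T}}$``$R_{\alpha,i}$ is stationary for all $i<j$'' is dense below $t$ — pick $\alpha\in\Sigma^{**}\cap E$ with $\ell^\alpha_\rho>\max(t')$, set $t^*=t'\cup\{g\}$ for $g\in S_{-1}$ above $\max(t')$ and above $\sup B_\alpha$, and apply the preservation fact of Claim~\ref{stat_preserve_claim} coordinate by coordinate. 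Hence, choosing $H$ with $t\in H$, the set
\[ A=\{\alpha\in\Sigma^{**}\mid R_{\alpha,i}\text{ is stationary for all }i<j\} \]
meets every club of $V[G]$, so, as $\bb{T}$ is $\lambda$-distributive, $A$ is stationary in $V[G*H]$; moreover $A\subseteq\Sigma\subseteq S^\lambda_\omega\cup\Sigma$. In $V[G*H]$ apply $\DSR({<}\kappa,S^\lambda_\omega\cup\Sigma)$ to the matrix that places $A$ in every row (and also $R_{\alpha,i}$, $i<j$, in row $\alpha$ for $\alpha\in A$; note $j+1<\kappa$), getting $\gamma\in S^\lambda_{>\omega}$ and a club $F^*\subseteq\gamma$ with every set in every row $\alpha\in F^*$ reflecting at $\gamma$. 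Since $A$ reflects at $\gamma$, $F:=F^*\cap A$ is stationary in $\gamma$, and $R_{\alpha,i}\cap\gamma$ is stationary in $\gamma$ for $\alpha\in F$, $i<j$. Stationarity is downward absolute and $F\in V[G]$ by $\lambda$-distributivity, so $\gamma,F$ witness this instance of $\sDSR({<}\kappa,S)$.

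The genuinely new difficulty, relative to Theorem~\ref{unbounded_stationary_thm}, is obtaining \emph{stationarily} many surviving rows rather than just unboundedly many; the iterated pin-down, together with the role of $\Sigma$ — into which the set $A$ of good rows is absorbed so that it can be fed back into $\DSR({<}\kappa,S^\lambda_\omega\cup\Sigma)$ as a reflecting set, forcing $F^*\cap A$ to be stationary in $\gamma$ — is the mechanism for this. The hypothesis $\kappa<\lambda$ keeps the iteration and the ordinal $\varepsilon$ below $\lambda$, and $\kappa^+<\lambda$ is what makes the hypothesis $\DSR^*({<}\kappa,S^\lambda_\omega\cup\Sigma)$ non-vacuous, since a stationary $\Sigma\subseteq S^\lambda_{\geq\kappa}$ can only reflect at an ordinal of $V$-cofinality exceeding $\kappa$.
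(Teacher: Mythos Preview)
Your setup is correct through the definition of $\bb{P}$, the failure of $\DSR(1,S)$ in $V[G]$, the verification that $\bb{P}*\dot{\bb{T}}$ has a dense $\lambda$-directed closed subset of size $\lambda$, and the final application of $\DSR({<}\kappa,S^\lambda_\omega\cup\Sigma)$ in $V[G*H]$. The gap is the sentence ``$A$ meets every club of $V[G]$, so, as $\bb{T}$ is $\lambda$-distributive, $A$ is stationary in $V[G*H]$.'' This inference is false: $\lambda$-distributivity does not prevent $\bb{T}$ from adding new clubs in $\lambda$ (indeed club-shooting forcings are the standard examples of $\lambda$-distributive posets that destroy stationarity), so meeting every ground-model club does not yield stationarity in the extension. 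Since the last step genuinely needs $A$ stationary in $V[G*H]$---that is what makes $F=F^*\cap A$ stationary in $\gamma$---this is fatal as written. Your density argument only delivers unboundedly many surviving rows, exactly as in Theorem~\ref{unbounded_stationary_thm}; it does not upgrade to stationarity.

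The paper repairs this by changing $\bb{T}$: conditions are pairs $(c^t,d^t)$ where $c^t$ is an $\omega$-closed bounded subset of $\lambda$ and $d^t$ assigns to each $\alpha\in c^t$ a \emph{separate} closed bounded set disjoint from $S_\alpha$. Separating the clubs makes the truncation $s\mapsto s\setminus\varepsilon$ (drop all $\alpha\le\varepsilon$ from $c^s$ together with their $d^s(\alpha)$) a genuine projection, so that if a condition forces a set stationary, so does its truncation. With your simple $\bb{T}$ this fails, because an element $\alpha\in t'\cap\varepsilon$ forces all of $t'$ to avoid $S_\alpha$, a constraint that is lost under truncation and cannot be restored. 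The paper then dispenses with the iterated pin-down in favor of a single Fodor application: since $|X_\alpha|<\kappa$ and $\Sigma\subseteq S^\lambda_{\ge\kappa}$, the map $\alpha\mapsto\sup(X_\alpha\cap\alpha)$ is regressive on $\Sigma$, and because $\Sigma\in V$ it remains stationary in every $\bb{T}$-extension; one then fixes $\varepsilon$ and $t_0$ with $t_0\Vdash R_\varepsilon$ stationary, projects to $t=t_0\setminus(\varepsilon+1)$, and proves directly (working in a generic extension and building a condition) that the set of $\alpha\in R_\varepsilon$ whose entire row survives is forced stationary by $t$. The split into $X_\alpha\cap\alpha$ versus $X_\alpha\setminus\alpha$ is not incidental: it ties the ``high'' coordinates to $\alpha$ itself, which is exactly what is needed to squeeze them above the part of the generic used to force $\alpha$ into the club. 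Your pin-down gives no such link between $\alpha$ and its high coordinates.
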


\begin{proof}
  Let $S = S^\lambda_\omega$, and let $\bb{P}$ be the forcing poset whose
  conditions are all functions of the form $p:\gamma^p \times \gamma^p
  \rightarrow 2$ such that
  \begin{itemize}
    \item $\gamma^p < \lambda$;
    \item for each $\alpha < \gamma^p$, letting $S^p_\alpha = \{\eta
    < \gamma^p \mid p(\alpha, \eta) = 1\}$, we have
    \begin{itemize}
      \item $S^p_\alpha \subseteq S \setminus (\alpha + 1)$;
      \item for all $\beta \in S^{\gamma^p + 1}_{{>}\omega}$, the set
      \[
        \{\alpha < \beta \mid S^p_\alpha \cap \beta \text{ is nonstationary in
        } \beta\}
      \]
      is stationary in $\beta$;
    \end{itemize}
    \item for all $\alpha < \alpha^* < \gamma^p$, we have $S^p_\alpha
    \cap S^p_{\alpha^*} = \emptyset$.
  \end{itemize}
  Let $G$ be $\bb{P}$-generic over $V$ and, for $\alpha < \lambda$, let
  $S_\alpha = \bigcup_{p \in G} S^p_\alpha$. By arguments as in the
  proofs of Claims \ref{strat_closed_claim} and \ref{stat_claim_i},
  we have the following facts.
  \begin{itemize}
    \item In $V$, $\bb{P}$ is $\lambda$-strategically closed.
    \item In $V[G]$, for every $\alpha < \lambda$, $S_\alpha$ is stationary
    in $\lambda$. Moreover, if we let $S_{-1} = S \setminus (\bigcup_{\alpha
    < \lambda} S_\alpha)$, then $S_{-1}$ is stationary as well.
  \end{itemize}

  As a result, by the definition of conditions in $\bb{P}$,
  $\langle S_\alpha \mid \alpha < \lambda \rangle$ witnesses the failure
  of $\DSR(1, S^\lambda_\omega)$ in $V[G]$.

  In $V[G]$, we define a poset $\bb{T}$ designed to destroy the stationarity
  of $S_\alpha$ for a generic $\omega$-closed unbounded set of $\alpha$-s.
  Recall that a set $X$ of ordinals is $\omega$-closed if, for all
  ordinals $\alpha$, if $\cf(\alpha) = \omega$ and $\sup(X \cap \alpha)
  = \alpha$, then $\alpha \in X$.
  When defining similar forcings in previous arguments, it was sufficient to
  force a single club simultaneously destroying the stationarity of many
  stationary sets, but here it will pay dividends to more carefully add a distinct
  club disjoint from each $S_\alpha$. To be more precise, conditions in
  $\bb{T}$ are pairs $t = (c^t, d^t)$ satisfying the following requirements:
  \begin{itemize}
    \item $c^t$ is an $\omega$-closed bounded subset of $\lambda$;
    \item $d^t$ is a function with domain $c^t$ such that, for all
    $\alpha \in c^t$, $d^t(\alpha)$ is a closed bounded subset of
    $\lambda$ with $d^t(\alpha) \cap S_\alpha = \emptyset$.
  \end{itemize}
  If $t$ and $s$ are conditions in $\bb{T}$, then $s \leq_{\bb{T}} t$ if and only if
  \begin{itemize}
    \item $c^s$ end-extends $c^t$; and
    \item for all $\alpha \in c^t$, $d^s(\alpha)$ end-extends $d^t(\alpha)$.
  \end{itemize}
  In $V$, let $\dot{\bb{T}}$ be a $\bb{P}$-name for $\bb{T}$.
  \begin{claim}
    In $V$, $\bb{P}*\dot{\bb{T}}$ has a dense $\lambda$-directed closed subset of
    size $\lambda$.
  \end{claim}
  \begin{proof}
    Define $\bb{U}$ to be the set of $(p, \check{t}) \in
    \bb{P} * \dot{\bb{T}}$ such that, letting $t = (c^t, d^t)$, we have
    \begin{itemize}
      \item $c^t$ has a largest element;
      \item $\gamma^p = \max(c^t) + 1$;
      \item for all $\alpha \in c^t$, $\gamma^p = \max(d^t(\alpha)) + 1$.
    \end{itemize}
    The verification that $\bb{U}$ is a dense $\lambda$-directed closed
    subset of $\bb{P} * \dot{\bb{T}}$ of cardinality $\lambda$ is
    almost the same as that in the proof of Claim \ref{claim_315},
    here using the fact that, if $X$ is an $\omega$-closed set of ordinals,
    $\beta$ is an ordinal of uncountable cofinality, and $\sup(X \cap \beta)
    = \beta$, then $X \cap \beta$ is stationary in $\beta$. We therefore
    leave the rest of the proof to the reader.
  \end{proof}
  Work for now in $V[G]$, which will be our desired model.
  If $t \in \bb{T}$, then
  $\bb{T}/t$ denotes the set $\{s \in \bb{T} \mid s \leq t\}$, considered as a
  sub-poset of $\bb{T}$. Given a condition $t \in \bb{T}$ and an
  $\varepsilon < \lambda$, let $t \setminus \varepsilon$ denote the
  condition $(c^t \setminus \varepsilon, d^p \restriction (c^t
  \setminus \varepsilon))$. Furthermore, define a function
  $\pi_{t, \varepsilon} : \bb{T}/t \rightarrow \bb{T}/(t \setminus
  \varepsilon)$ by letting $\pi_{t, \varepsilon}(s) = s \setminus
  \varepsilon$ for all $s \in \bb{T}/t$.

  \begin{claim} \label{projection_claim}
    If $t \in \bb{T}$, $\varepsilon < \lambda$, and
    $c^t \setminus \varepsilon$ is non-empty, then $\pi_{t,
    \varepsilon}$ is a projection%
    \footnote{See, e.g., \cite[p.~335]{Abraham:ProperForcing}.}
    from $\bb{T}/t$ to
    $\bb{T}/(t \setminus \varepsilon)$.
  \end{claim}

  \begin{proof}
    It is clear that $\pi_{t,\varepsilon}$ is order-preserving and
    maps the top element of $\bb{T}/t$, namely $t$, to the top element of
    $\bb{T}/(t \setminus \varepsilon)$, namely $t \setminus \varepsilon$.
    It remains to show that,
    for all $s \in \bb{T}/t$ and all $r \leq (s \setminus \varepsilon)$,
    there is $s^* \leq s$ such that $(s^* \setminus \varepsilon) \leq r$.
    To this end, fix such $s$ and $r$. Since $c^t \setminus \varepsilon
    \neq \emptyset$ and $s \leq t$, it follows that for all $\alpha \in
    c^r \setminus c^s$, we have $\alpha\supseteq c^s$. Define a
    condition $s^* \in \bb{T}$ as follows:
    \begin{itemize}
      \item $c^{s^*} = c^r \cup c^s$;
      \item for all $\alpha \in c^r$, $d^{s^*}(\alpha) = d^r(\alpha)$;
      \item for all $\alpha \in c^s \setminus c^r$, $d^{s^*}(\alpha)
      = d^s(\alpha)$.
    \end{itemize}
    Note that $c^{s^*} \setminus \varepsilon = c^r$. It is now easily
    verified that $s^*$ is a condition in $\bb{T}$ extending $s$ and
    that $s^* \setminus \varepsilon$ extends $r$.
  \end{proof}

  \begin{claim} \label{claim_323}
    Suppose that, in $V[G]$, we have $t \in \bb{T}$, $\varepsilon < \lambda$,
    and a stationary $R \subseteq \lambda$ such that
    \begin{itemize}
      \item $c^t \setminus \varepsilon \neq \emptyset$; and
      \item $t \Vdash_{\bb{T}}``\check{R} \text{ is stationary in }\check{\lambda}"$.
    \end{itemize}
    Then $t \setminus \varepsilon \Vdash_{\bb{T}}``\check{R}
    \text{ is stationary in }\check{\lambda}"$.
  \end{claim}

  \begin{proof}
This is an instance of a more general fact: if $\bb{P}$ and $\bb{Q}$ are posets with a projection $\pi:\bb{Q}\longrightarrow\bb{P}$, $A$ is some set, $\varphi(x)$ is a statement that goes down to inner models, and for some $q\in\bb{Q}$, $q\forces_{\bb{Q}}\varphi(\check{A})$, then $\pi(q)\forces_{\bb{P}}\varphi(\check{A})$. If not, let $p\le\pi(q)$ be such that $p\forces_{\bb{P}}\neg\varphi(\check{A})$. Let $q^*\le q$ be such that $\pi(q^*)=p$. Let $I\ni q^*$ be $\bb{Q}$-generic. Then $\varphi(A)$ holds in $V[I]$, since $q^*\le q$. But $\pi``I$ generates a $V$-generic filter $\bar{I}$ for $\bb{P}$, and $\pi(q^*)=p\in\bar{I}$. So $\varphi(A)$ fails in $V[\bar{I}]$. But $V[\bar{I}]\subseteq V[I]$, so, since $\varphi(A)$ holds in $V[I]$, it must hold in $V[\bar{I}]$, a contradiction.
%    Suppose not. Then there is a $\bb{T}$-name $\dot{E}$
%    and an $r \leq (t \setminus \varepsilon)$ such that
%    $r$ forces $\dot{E}$ to be a club in $\lambda$ disjoint from
%    $R$. By Claim \ref{projection_claim}, we can find $s \leq t$
%    such that $s \setminus \varepsilon \leq r$.
%    It follows that, if $H$ is
%    $\bb{T}$-generic over $V[G]$ with $s \in H$, then $V[G * H]$ contains
%    a $\bb{T}$-generic filter $H^*$ with $r \in H^*$. Then
%    the interpretation of $\dot{E}$ with respect to $H^*$ is in
%    $V[G * H]$, thus witnessing that $R$ is nonstationary in $V[G * H]$.
%    But this contradicts the fact that $t \Vdash_{\bb{T}}``\check{R}
%    \text{ is stationary}"$, and, since $s \leq t$, we have
%    $t \in H$.
  \end{proof}

  \begin{claim} \label{claim_324}
    Suppose that $-1 \leq \epsilon < \lambda$, $R$ is a stationary
    subset of $S_\epsilon$, and $t \in \bb{T}$ is a condition
    such that $\epsilon \notin c^t$ and $\sup(c^t) > \epsilon$.
    Then $t \Vdash_{\bb{R}}``\check{R} \text{ is stationary in }\check{\lambda}
    "$.
  \end{claim}

  \begin{proof}
    The proof is almost exactly the same as that of Claim
    \ref{stat_preserve_claim}, so we leave it to the reader.
  \end{proof}

  It remains to verify that
  $\sDSR({<}\kappa, S)$ holds in $V[G]$. To this end, let
  $\langle T_{\alpha, i} \mid \alpha < \lambda, ~ i < j_\alpha \rangle$
  be a matrix of stationary subsets of $S$, with $j_\alpha < \kappa$
  for all $\alpha < \lambda$. As in the proof of Theorem
  \ref{unbounded_stationary_thm}, for each $\alpha < \lambda$
  and $i < j_\alpha$, we can find $\beta(\alpha, i)$ such that
  $-1 \leq \beta(\alpha, i) < \lambda$ and $T_{\alpha, i} \cap
  S_{\beta(\alpha, i)}$ is stationary.

  For each $\alpha < \lambda$, set
  \begin{itemize}
    \item $X_\alpha = \{\beta(\alpha, i) \mid i < j_\alpha\}$;
    \item $X_\alpha^- = X_\alpha \cap \alpha$;
    \item $X_\alpha^+ = X_\alpha \setminus \alpha$.
  \end{itemize}
  Note that $|X_\alpha| < \kappa$ for all $\alpha < \lambda$. Therefore,
  recalling that $\Sigma \subseteq S^\lambda_{\geq \kappa}$, we have
  $\sup(X_\alpha^-) < \alpha$ for all $\alpha \in \Sigma$.
  Then the function $\alpha \mapsto \sup(X_\alpha^-)$ is regressive on
  the stationary set $\Sigma$ and clearly continues to be
  in any forcing extension by $\bb{T}$. (Recall that $\bb{P} * \dot{\bb{T}}$
  has a dense $\lambda$-directed closed subset and hence preserves
  the stationarity of all stationary subsets of $\lambda$ that are in $V$;
  in particular, it preserves the stationarity of $\Sigma$).
  Therefore, we can find
  $\varepsilon < \lambda$ and $t_0 \in \bb{T}$ such that $t_0$
  forces that $R_{\varepsilon} := \{\alpha \in \Sigma
  \mid \sup(X_\alpha^-) =
  \varepsilon\}$ is stationary in $\lambda$. By extending $t_0$ if
  necessary, we may assume that $c^{t_0} \setminus (\varepsilon + 1)
  \neq \emptyset$. Let $t = t_0 \setminus (\varepsilon + 1)$. Then
  we have the following:
  \begin{itemize}
    \item By Claim \ref{claim_323}, $t \Vdash_\bb{T} ``\check{R}_{\varepsilon}
    \text{ is stationary}"$.
    \item By Claim \ref{claim_324}, for all $\alpha \in R_{\varepsilon}$
    and all $i < j_\alpha$ such that $\beta(\alpha, i) < \alpha$,
    $t \Vdash_{\bb{T}} ``\check{T}_{\alpha, i} \text{ is stationary}"$.
  \end{itemize}
  Let $\dot{R}^*$ be a $\bb{T}$-name for the set of $\alpha \in
  R_\varepsilon$ such that, for all $i < j_\alpha$, $T_{\alpha, i}$
  is stationary in $\lambda$ (after forcing with $\bb{T}$).

  \begin{claim}
    $t \Vdash_{\bb{T}}``\dot{R}^* \text{ is stationary}"$.
  \end{claim}

  \begin{proof}
    Fix $s_0 \leq t$ and a $\bb{T}$-name $\dot{E}$ for a club in $\lambda$.
    We must find $s \leq s_0$ and $\alpha \in R_\epsilon$ such that
    $s \Vdash_{\bb{T}}``\check{\alpha} \in \dot{E} \cap \dot{R}^*"$.
    Let $H$ be $\bb{T}$-generic over $V[G]$ with $s_0 \in H$, and let
    $E$ be the realization of $\dot{E}$. Let $c = \bigcup_{s \in H} c^s$ and
    define a function $d$ on $c$ by letting $d(\alpha) = \bigcup_{s \in H}
    d^s(\alpha)$. For each $\alpha \in E$, let
    $r_\alpha \in H$ be such that $r_\alpha \leq s_0$ and
    $r_\alpha \Vdash_{\bb{T}}``\check{\alpha}
    \in \dot{E}"$, and let $\xi_\alpha < \lambda$ be such that
    $c^{r_\alpha} \subseteq \xi_\alpha$. Then the set
    \[
      E^* = \{\alpha \in \lim(E) \mid \text{for all } \eta \in E \cap \alpha, ~
      \xi_\eta < \alpha\}
    \]
    is a club in $\lambda$. Since $s_0 \leq t$ and hence $t \in H$, we know
    that $R_{\varepsilon}$ is stationary in $V[G * H]$, so we can find
    $\alpha \in E^* \cap R_{\varepsilon}$. We can also find a
    $\gamma < \lambda$ large enough so that, for all $\eta < \alpha$ and all
    $\beta \in c^{r_\eta}$, we have $d^{r_\eta}(\beta) \subseteq \gamma$.
    Now define $s_1 = (c^{s_1}, d^{s_1})$ by letting $c^{s_1} = c \cap
    \alpha$ and, for all $\beta \in c$, letting $d^{s_1}(\beta) =
    d(\beta) \cap (\gamma + 1)$. Note that $s_1 \in \bb{T}$ and
    $s_1 \leq r_\eta$ for all $\eta < \alpha$. Therefore, $s_1$ forces that
    $\alpha$ is a limit point of $\dot{E}$.

    Now let $\delta < \lambda$ be large enough so that $\alpha < \delta$
    and $X_\alpha^+ \subseteq \delta$. Define a condition $s \leq s_1$ by letting
    $c^s = c^{s_1} \cup \{\delta\}$, $d^s \restriction c^{s_1} = d^{s_1}$,
    and $d^s(\delta) = \emptyset$. Notice that $c^s \cap \varepsilon =
    c^s \cap [\alpha, \delta) = \emptyset$, and therefore $X_\alpha \cap
    c^s = \emptyset$. Moreover, $\delta \in c^s$ and every element of
    $X_\alpha$ is less than $\delta$. Therefore, by Claim \ref{claim_324},
    \[
      s \Vdash_{\bb{T}}``\text{for all } i < j_\alpha, ~ \check{T}_{\alpha, i}
      \text{ is stationary}".
    \]
    Therefore, $s \Vdash_{\bb{T}}``\check{\alpha} \in \dot{E} \cap \dot{R}^*$,
    as desired.
  \end{proof}

  Now let $H$ be $\bb{T}$-generic over $V[G]$ with $t \in H$, and let $R^*$ be
  the interpretation of $\dot{R}^*$. Define a matrix $\langle T^*_{\alpha, i}
  \mid \alpha < \lambda, ~ i \leq j_\alpha \rangle$ of stationary subsets of
  $S^\lambda_\omega \cup \Sigma$ as follows. First, for all
  $\alpha < \lambda$, let $T^*_{\alpha, j_\alpha} = R^*$. Next, if
  $\alpha \in R^*$, then let $T^*_{\alpha, i} = T_{\alpha, i}$ for all
  $i < j_\alpha$. Finally, if $\alpha \in \lambda \setminus R^*$, then
  simply let $T^*_{\alpha, i} = S^\lambda_\omega$ for all $i < j_\alpha$.

  Since $\DSR^*({<}\kappa, S^\lambda_\omega \cup \Sigma)$ holds in
  $V$ and $\bb{P} * \dot{\bb{T}}$ has a dense $\lambda$-directed closed subset
  of size $\lambda$, $\DSR({<}\kappa, S^\lambda_\omega \cup \Sigma)$
  holds in $V[G*H]$. Therefore, we can find $\gamma \in S^\lambda_{>\omega}$
  and a club $F^*$ in $\gamma$ such that, for all $\alpha \in F^*$ and
  all $i \leq j_\alpha$, $T^*_{\alpha, i}$ reflects at $\gamma$.
  In particular, $R^*$ reflects at $\gamma$, so $F = F^* \cap R^*$ is stationary
  in $\gamma$. Moreover, for all $\alpha \in F$ and all $i < j_\alpha$,
  we know that $T^*_{\alpha, i} = T_{\alpha, i}$, so $T_{\alpha, i}$ reflects
  at $\gamma$. But stationarity is downward absolute and, since $\bb{T}$ is
  $\lambda$-distributive in $V[G]$, we have $F \in V[G]$, so, in $V[G]$,
  $\gamma$ and $F$ witness this instance of $\sDSR({<}\kappa, S^\lambda_\omega)$.
\end{proof}

\begin{remark}
  We do not know if the requirement that $\kappa^+ < \lambda$ is necessary
  in the statement of the above theorem. It was used in our proof to allow
  for the existence of a stationary $\Sigma \subseteq S^\lambda_{\geq \kappa}$
  that reflects.
\end{remark}

Recall that Lemma \ref{dsr_implies_simultaneous} showed that $\DSR(1,S)$ implies $\refl(\omega,S)$. The next theorem
provides a limitation to the extent to which this can be generalized. It also shows that Theorem \ref{sdr_square_thm}, stating that $\sDSR({<}\kappa,S)$ implies the failure of $\square(\lambda,{<}\kappa)$ (where $1<\kappa<\lambda$ and $S\subseteq\lambda$ is stationary), is in a sense optimal. The assumptions of the theorem hold, for example, if $\kappa$ is indestructibly generically supercompact.

\begin{theorem}
\label{DSR_simultaneous_thm}
  Suppose that $\omega_1<\kappa<\lambda$ are regular cardinals, and $\DSR({<}\kappa, S^\lambda_\omega)$ holds and continues to hold in any forcing extension obtained by a $\kappa$-directed closed forcing notion that preserves cofinalities up to $\lambda$. Then there is such a forcing extension in which
  \begin{enumerate}
    \item $\DSR({<}\kappa, S^\lambda_\omega)$ holds;
    \item $\refl(\kappa, S')$ fails for all stationary $S'\subseteq\lambda$.
  \end{enumerate}
  In fact, in this forcing extension, we have $\square^{\mathrm{ind}}(\lambda,\kappa)$,
  a strengthening of $\square(\lambda,\kappa)$, see \ref{def:IndexedSquare}.
\end{theorem}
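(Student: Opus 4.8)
The plan is to realize clauses (1) and (2) together in a single forcing extension $W$ of $V$ by a $\kappa$-directed closed forcing notion preserving cofinalities up to $\lambda$; then (1) holds in $W$ automatically, by the indestructibility hypothesis, while (2) will be read off from a $\square^{\mathrm{ind}}(\lambda,\kappa)$-sequence living in $W$.

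The forcing $\bb{Q}$ I would use is a refinement of the standard poset that adds a $\square(\lambda,\kappa)$-sequence by bounded coherent approximations. First, following the treatment of indexed squares, the approximated sequence carries indices, $\langle C_{\alpha,i}\mid\alpha\in\lim(\lambda),\ i\in u_\alpha\rangle$ with $|u_\alpha|\le\kappa$, subject to the indexed coherence requirement (if $\beta$ is a limit point of $C_{\alpha,i}$ then $i\in u_\beta$ and $C_{\beta,i}=C_{\alpha,i}\cap\beta$) and the separation requirement that $\sup(C_{\alpha,i}\cap C_{\alpha,j})<\alpha$ whenever $i\neq j$ in $u_\alpha$. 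Second, one distinguished index, say $0$, is required to be present at every level, and its branch is built \emph{in tandem with a thread}: a condition additionally carries a closed bounded $t\subseteq\lambda$ with $C_{\alpha,0}=t\cap\alpha$ on the limit points of $t$, and one passes to the dense subset of conditions of successor length whose top is $\max(t)+1$. Exactly as in the classical verification that ``force $\square(\lambda)$, then thread it'' yields a $\lambda$-directed closed iteration, this dense subset is $\lambda$-directed closed: once the index-$0$ branch has been committed to, it supplies the coherent club needed at every limit level to form lower bounds of directed families. Consequently $\bb{Q}$ is $\kappa$-directed closed, countably closed, and $\lambda$-distributive, hence preserves cofinalities up to $\lambda$ and computes $S^\lambda_\omega$ as in $V$. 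A density argument shows the generic object $\langle C_{\alpha,i}\mid\alpha\in\lim(\lambda),\ i\in u_\alpha\rangle$ has length $\lambda$, satisfies all the indexed requirements, and has its index-$0$ branch equal to a genuine club $T$ that threads index $0$.

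Now let $W=V^{\bb{Q}}$. Clause (1) is immediate from the indestructibility hypothesis, since $\bb{Q}$ is $\kappa$-directed closed and preserves cofinalities up to $\lambda$. For (2) I would show that, in $W$, the reduct obtained by deleting index $0$ (and reindexing) is a $\square^{\mathrm{ind}}(\lambda,\kappa)$-sequence. Coherence, width $\le\kappa$, and — provided $\bb{Q}$ has been arranged so that every $u_\alpha$ retains a nonzero index — nonemptiness at every level are all inherited, and the crux is threadlessness: a thread $T_i$ through some index $i\neq 0$ in $W$ would, together with the thread $T$ through index $0$ that is already present, yield two clubs of $\lambda$ with $T\cap\alpha=C_{\alpha,0}$ and $T_i\cap\alpha=C_{\alpha,i}$ for club-many $\alpha$; but then separation forces $C_{\alpha,0}\cap C_{\alpha,i}=(T\cap T_i)\cap\alpha$ to be bounded in $\alpha$ for those $\alpha$, which is impossible since $T\cap T_i$ is a club in $\lambda$. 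Hence $\square^{\mathrm{ind}}(\lambda,\kappa)$ holds in $W$, and then, by the analysis of indexed square sequences carried out in connection with Definition \ref{def:IndexedSquare}, $\refl(\kappa,S')$ fails in $W$ for every stationary $S'\subseteq\lambda$: given such an $S'$, one splits it into $\kappa$ stationary pieces indexed compatibly with the sequence and uses indexed coherence together with separation to preclude their simultaneous reflection at any ordinal $\gamma$.

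I expect the main obstacle to be the tension exposed in the previous paragraph: $\kappa$-directed (indeed $\lambda$-directed) closure of $\bb{Q}$ forces one of the branches to become a thread, whereas the property we want in $W$, $\square^{\mathrm{ind}}(\lambda,\kappa)$, is a threadlessness statement. The resolution is that in an indexed square distinct branches are separated and therefore cannot both be threaded, so threading the designated branch leaves all the others unthreadable; but getting the concrete bookkeeping right — designing the conditions of $\bb{Q}$ so that the dense $\lambda$-directed closed subset genuinely exists while the generic reduct really is a legitimate indexed $\square(\lambda,\kappa)$-sequence with nonempty index sets throughout — is the delicate part. The remaining ingredients, namely the length and indexed structure of the generic object and the implication from $\square^{\mathrm{ind}}(\lambda,\kappa)$ to the failure of $\refl(\kappa,S')$, are standard or are supplied by the material surrounding Definition \ref{def:IndexedSquare}.
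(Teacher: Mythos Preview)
Your proposal has a genuine structural gap: the ``separation requirement'' you impose, namely that $\sup(C_{\alpha,i}\cap C_{\alpha,j})<\alpha$ for $i\neq j$, directly contradicts clause~(3) of Definition~\ref{def:IndexedSquare}, which requires $C_{\alpha,i}\subseteq C_{\alpha,j}$ whenever $i(\alpha)\le i<j<\kappa$. Indexed squares are \emph{nested}, not separated. Consequently the object your forcing $\bb{Q}$ produces is not a $\square^{\mathrm{ind}}(\lambda,\kappa)$-sequence in the sense of this paper, and your threadlessness argument for the reduct---which hinges entirely on separation to derive a contradiction from two coexisting threads---does not go through. With the correct nested definition, a thread $T$ through index~$0$ satisfies $T\cap\alpha=C_{\alpha,0}\subseteq C_{\alpha,i}$ at limit points, so there is no obvious obstruction to a further thread through a higher index, and your deletion trick needs a completely different justification. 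Likewise, your sketch for the failure of $\refl(\kappa,S')$ invokes separation, which is unavailable.

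The paper's proof bypasses all of this. It simply cites \cite[Lemma~3.14]{Hayut-LambieHanson:SimultaneousReflectionAndSquare} for the existence of a forcing $\bb{S}$ that is $\kappa$-directed closed and $\lambda$-strategically closed and that adds a $\square^{\mathrm{ind}}(\lambda,\kappa)$-sequence outright (no threading, no deletion). These closure properties already guarantee that $\bb{S}$ preserves cofinalities up to $\lambda$, so clause~(1) follows from the indestructibility hypothesis. For clause~(2), the paper observes that condition~(5) of Definition~\ref{def:IndexedSquare} makes any $\square^{\mathrm{ind}}(\lambda,\kappa)$-sequence a \emph{full} $\square(\lambda,\kappa)$-sequence, and then invokes Theorem~\ref{thm:SimulReflectionImpliesNoFullSquare} to conclude that $\refl(\kappa,S')$ fails for every stationary $S'\subseteq\lambda$. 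No ad hoc splitting of $S'$ is needed.
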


\begin{proof}
Under the assumptions of the theorem, there is a forcing notion $\mathbb{S}$ that is $\kappa$-directed closed and $\lambda$-strategically closed, such that letting $G$ be $\bb{S}$-generic
over $V$, a strengthening of $\square(\lambda,\kappa)$ called
$\square^{\text{ind}}(\lambda,\kappa)$ holds in $V[G]$; see
\cite[Lemma 3.14]{Hayut-LambieHanson:SimultaneousReflectionAndSquare}. We do not
need the precise definition of $\square^{\mathrm{ind}}(\lambda, \kappa)$ here.
It suffices for the present purposes to know that a
$\square^{\text{ind}}(\lambda,\kappa)$-sequence
is in particular a full $\square(\lambda, \kappa)$-sequence. So by Theorem
\ref{thm:SimulReflectionImpliesNoFullSquare}, it follows that
$\refl(\kappa,S)$ fails for every stationary $S\subseteq\lambda$ in $V[G]$.
But by assumption, $\DSR({<}\kappa,S^\lambda_\omega)$ continues to hold.
\end{proof}

\section{Diagonal stationary reflection and the strong reflection principle}
\label{sec:DSRandSRP}

Let us now make a connection to \Todorcevic's strong reflection principle. It will be useful to recall some definitions and facts.

\begin{definition}
\label{def:GeneralizedStationarityNotations}
Let $\gamma$ be a regular uncountable cardinal, and let $X \supseteq \gamma$ be a set.
For a set $A\subseteq\gamma$, let
\[\lift(A,[X]^\omega)=\{x\in[X]^\omega\st\sup(x\cap\gamma)\in A\}.\]
Now let $S\subseteq[X]^\omega$ be stationary. Then $S$ is \emph{projective stationary} if for every stationary set $A\subseteq\omega_1$, the set
\[S\cap\lift(A,[X]^\omega)\]
is stationary. %Note that $S\cap (A\uparrow[\bigcup S]^\omega)=\{x\in S\st x\cap\omega_1\in A\}$.

If $W\subseteq X\subseteq Y$, then we write
\[S\uparrow[Y]^\omega=\{y\in[Y]^\omega\st y\cap X\in S\}\]
and
\[S\downarrow[W]^\omega=\{x\cap W\st x\in S\}.\]
The \emph{strong reflection principle} (\textsf{SRP}) is the assertion that for every regular cardinal $\theta\ge\omega_2$, if $S\subseteq[H_\theta]^\omega$ is projective stationary, then there is a continuous $\in$-chain $\langle M_i \mid i < \omega_1 \rangle$ of countable elementary submodels of $\langle H_\theta,\in\rangle$ such that for all $i<\omega_1$, $M_i\in S$.
\end{definition}

\begin{fact}
\label{fact:ProjectionAndLifting}
Let $W\subseteq X\subseteq Y$, and let $S\subseteq[X]^\omega$ be stationary.
\begin{enumerate}[label=(\arabic*)]
  \item
  \label{item:LiftingAndProjectingPreservesStationarity}
  $S\downarrow[W]^\omega$ and $S\uparrow[Y]^\omega$ are stationary.
  \item
  \label{item:LiftingAndProjectingPreservesProjectiveStationarity}
  If $\omega_2\subseteq W$ and $S$ is projective stationary, then $S\downarrow[W]^\omega$ and $S\uparrow[Y]^\omega$ are projective stationary.
  \item
  \label{item:IntersectingWithClubs}
  If $S$ is projective stationary and $C\subseteq[X]^\omega$ is club, then $S\cap C$ is projective stationary.
\end{enumerate}
\end{fact}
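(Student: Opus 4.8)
The plan is to prove part (1) by the classical elementary-submodel argument for projections and liftings of generalized stationary sets, and then to deduce parts (2) and (3) from it by formal set-theoretic manipulations, using that $\sup(x\cap\omega_1)$ is insensitive to the operations $\uparrow$ and $\downarrow$ and that a stationary subset of $[X]^\omega$ meets every club.

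For part (1) I would use two standard facts. First, $\mathcal S\subseteq[Z]^\omega$ is stationary iff for every sufficiently large regular $\theta$ with $Z\in H_\theta$ and every structure $(H_\theta,\in,\dots)$ in a countable language there is a countable elementary submodel $M$ with $M\cap Z\in\mathcal S$. Second, every club $D\subseteq[Z]^\omega$ contains a set of the form $C_h=\{N\in[Z]^\omega\mid N\text{ is closed under }h\}$ for some $h\colon[Z]^{<\omega}\to Z$. To see that $S\uparrow[Y]^\omega$ is stationary, let $D\subseteq[Y]^\omega$ be club, fix $h\colon[Y]^{<\omega}\to Y$ with $C_h\subseteq D$, and, using the stationarity of $S$ in $[X]^\omega$, choose a countable $M\prec(H_\theta,\in,h,X,Y,S)$ with $M\cap X\in S$. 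Then $M\cap Y$ is closed under $h$, so $M\cap Y\in D$, and $(M\cap Y)\cap X=M\cap X\in S$ since $X\subseteq Y$, so $M\cap Y\in S\uparrow[Y]^\omega$; hence $D$ meets $S\uparrow[Y]^\omega$. The case of $S\downarrow[W]^\omega$ is symmetric: given a club $D\subseteq[W]^\omega$ with $C_h\subseteq D$ for some $h\colon[W]^{<\omega}\to W$, pick a countable $M\prec(H_\theta,\in,h,W,X,S)$ with $M\cap X\in S$; then $M\cap W$ is closed under $h$, hence lies in $D$, and $M\cap W=(M\cap X)\cap W\in S\downarrow[W]^\omega$ since $W\subseteq X$. (In both cases the relevant trace is countably infinite because $\omega\subseteq M$ and $\omega\subseteq W\subseteq X\subseteq Y$.)

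For part (2), fix a stationary $A\subseteq\omega_1$. The hypothesis $\omega_2\subseteq W$ gives $\omega_1\subseteq W\subseteq X\subseteq Y$, so for $y\in[Y]^\omega$ we have $y\cap\omega_1=(y\cap X)\cap\omega_1$, and for $x\in[X]^\omega$ we have $(x\cap W)\cap\omega_1=x\cap\omega_1$; in particular $\sup(\cdot\cap\omega_1)$ is preserved by $\uparrow$ and by $\downarrow$. This yields
\[
(S\uparrow[Y]^\omega)\cap\lift(A,[Y]^\omega)=\bigl(S\cap\lift(A,[X]^\omega)\bigr)\uparrow[Y]^\omega
\]
and
\[
\bigl(S\cap\lift(A,[X]^\omega)\bigr)\downarrow[W]^\omega\subseteq\bigl(S\downarrow[W]^\omega\bigr)\cap\lift(A,[W]^\omega).
\]
Since $S$ is projective stationary, $S\cap\lift(A,[X]^\omega)$ is stationary, so by part (1) its lift to $[Y]^\omega$ and its projection to $[W]^\omega$ are stationary; by the two displays, $(S\uparrow[Y]^\omega)\cap\lift(A,[Y]^\omega)$ and $(S\downarrow[W]^\omega)\cap\lift(A,[W]^\omega)$ are stationary. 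As $A$ was arbitrary, $S\uparrow[Y]^\omega$ and $S\downarrow[W]^\omega$ are projective stationary. Part (3) is then immediate: for stationary $A\subseteq\omega_1$, $(S\cap C)\cap\lift(A,[X]^\omega)=\bigl(S\cap\lift(A,[X]^\omega)\bigr)\cap C$ is the intersection of a stationary set with a club, hence stationary, so $S\cap C$ is projective stationary.

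I do not anticipate a real obstacle: all the content is either the folklore fact recalled in part (1) or elementary bookkeeping. The only places that reward a moment's care are checking that the projected and lifted submodel traces in part (1) are genuinely countably infinite, and verifying in part (2) that $\sup(x\cap\omega_1)$ is literally unchanged by $\downarrow$ and $\uparrow$ — which is exactly the point at which $\omega_1\subseteq W$, guaranteed by the hypothesis $\omega_2\subseteq W$, enters.
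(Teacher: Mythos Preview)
Your proposal is correct. The paper itself does not prove (1) and (2) but defers to the literature (Larson's \emph{Stationary Tower} for (1), Jech's \emph{Set Theory}, Exercise~37.17 for (2)); your arguments are precisely the standard ones found there, and your one-line proof of (3) is verbatim the paper's. One minor remark: your argument for (2) only uses $\omega_1\subseteq W$, not the full hypothesis $\omega_2\subseteq W$, which is harmless here since the stronger hypothesis is what the paper's applications provide anyway.
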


\begin{proof}
Item \ref{item:LiftingAndProjectingPreservesStationarity} is well-known; see, for
example, \cite{StationaryTower}. For item
\ref{item:LiftingAndProjectingPreservesProjectiveStationarity}, we refer the
reader to \cite[Ex.~37.17]{ST3}. Item \ref{item:IntersectingWithClubs} is immediate:
if $A\subseteq\omega_1$ is stationary, then $(S\cap C)\cap\lift(A,[X]^\omega)=
(S\cap\lift(A,[X]^\omega))\cap C$ is stationary.
\end{proof}

The following lemma is essentially due to Larson \cite{Larson:SeparatingSRP}.
The original statement and proof, though, have a confusing typo and
omit one implication, so we reformulate and prove it here.  We use the notation of Definition \ref{def:KillingStationarySets}; compare to Lemma
\ref{lem:BasicsOnT_S}.

\begin{lemma}
\label{lem:WhenT_APreservesGenStationarity}
Let $\gamma>\omega_1$ be regular, $X\supseteq\gamma$ a set, $A\subseteq \gamma$ stationary and $S\subseteq[X]^\omega$ also stationary, such that $\gamma\setminus A$ is unbounded in $\gamma$ and $\bb{T}_A$ is countably distributive (this is the case, for example, if $A\subseteq S^\gamma_\omega$ and $S^\gamma_\omega \setminus
A$ is stationary). Then the following are equivalent:
\begin{enumerate}[label=(\arabic*)]
\item
\label{item:SminusAisStationary}
$S\setminus\lift(A,[X]^\omega)$ is stationary.
\item
\label{item:StationarityOfSisPreserved}
$\bb{T}_A$ preserves the stationarity of $S$.
\item
\label{item:SomebodyPreservesStationarityOfS}
There is a condition $p\in\bb{T}_A$ that forces that $\check{S}$ is  stationary.
\end{enumerate}
\end{lemma}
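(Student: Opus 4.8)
The plan is to prove the cycle of implications $\ref{item:StationarityOfSisPreserved}\Rightarrow\ref{item:SomebodyPreservesStationarityOfS}\Rightarrow\ref{item:SminusAisStationary}\Rightarrow\ref{item:StationarityOfSisPreserved}$, mirroring the structure of the proof of Lemma~\ref{lem:BasicsOnT_S}. The implication $\ref{item:StationarityOfSisPreserved}\Rightarrow\ref{item:SomebodyPreservesStationarityOfS}$ is immediate, witnessed by the maximal condition of $\bb{T}_A$.

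For $\ref{item:SomebodyPreservesStationarityOfS}\Rightarrow\ref{item:SminusAisStationary}$ I would argue contrapositively. Assume $S\setminus\lift(A,[X]^\omega)$ is nonstationary and fix a club $\mathcal{C}\subseteq[X]^\omega$ with $\mathcal{C}\cap S\subseteq\lift(A,[X]^\omega)$; let $p\in\bb{T}_A$ be arbitrary. The key auxiliary fact is that for \emph{any} club $D\subseteq\gamma$ the set $\lift(D,[X]^\omega)$ contains a club of $[X]^\omega$: let $g_D(\xi)=\min(D\setminus(\xi+1))$ for $\xi<\gamma$ and $g_D(\xi)=\xi$ otherwise, and let $\mathcal{D}_0$ be the club of $x\in[X]^\omega$ that contain $0$ and are closed under $g_D$. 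For such $x$, using that $\gamma$ is regular we have $\sup(x\cap\gamma)<\gamma$, this supremum is not attained (as $g_D$ is strictly increasing on $x\cap\gamma$), and the ordinals $g_D(\xi)$ for $\xi\in x\cap\gamma$ lie in $D$ and are cofinal in $\sup(x\cap\gamma)$, so $\sup(x\cap\gamma)\in D$; that is, $\mathcal{D}_0\subseteq\lift(D,[X]^\omega)$. Now let $G$ be $\bb{T}_A$-generic below $p$; then $D:=\bigcup G$ is a club in $\gamma$ disjoint from $A$, and, since $\bb{T}_A$ is countably distributive, $[X]^\omega$ is unchanged and $\mathcal{C}$ and $\mathcal{D}_0$ remain clubs in $V[G]$. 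Any $x\in S\cap\mathcal{C}\cap\mathcal{D}_0$ would have $\sup(x\cap\gamma)\in A\cap D=\emptyset$, so $S\cap\mathcal{C}\cap\mathcal{D}_0=\emptyset$ and $S$ is nonstationary in $V[G]$. As $p$ was arbitrary, \ref{item:SomebodyPreservesStationarityOfS} fails.

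The heart of the argument is $\ref{item:SminusAisStationary}\Rightarrow\ref{item:StationarityOfSisPreserved}$, a ``catching the tail'' argument analogous to the implication $\ref{item:TminusSstationary}\Rightarrow\ref{item:T_SpreservesTAlways}$ of Lemma~\ref{lem:BasicsOnT_S}. Suppose $\bb{T}_A$ does not preserve the stationarity of $S$, so that some $t\in\bb{T}_A$ forces a name $\dot{\mathcal{C}}$ to be a club of $[X]^\omega$ disjoint from $\check S$ (we may take $\dot{\mathcal{C}}$ to be forced $\subseteq$-cofinal and closed under unions of $\subseteq$-increasing $\omega$-chains). Fix a large regular $\theta$. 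By \ref{item:SminusAisStationary} there are stationarily many countable $M\prec H_\theta$ with $M\cap X\in S\setminus\lift(A,[X]^\omega)$, so we may choose such an $M$ with $\{\bb{T}_A,A,S,t,\dot{\mathcal{C}}\}\subseteq M$; as $\gamma\subseteq X$, the condition $M\cap X\notin\lift(A,[X]^\omega)$ says exactly that $\delta:=\sup(M\cap\gamma)\notin A$. Working inside $M$, recursively build a $\leq_{\bb{T}_A}$-descending chain $\langle t_n\st n<\omega\rangle$ in $\bb{T}_A\cap M$ with $t_0\leq_{\bb{T}_A}t$ and $\sup_n\max(t_n)=\delta$ (using that $\gamma\setminus A$ is unbounded in $\gamma$, hence in $M\cap\gamma$), together with a $\subseteq$-increasing sequence $\langle y_n\st n<\omega\rangle$ in $[X]^\omega\cap M$ with $\bigcup_n y_n=M\cap X$ and $t_{n+1}\forces_{\bb{T}_A}\check y_{n+1}\in\dot{\mathcal{C}}$ (each step uses that $t$ forces $\dot{\mathcal{C}}$ to be $\subseteq$-cofinal, plus elementarity, so the needed extension of $t_n$ and the needed $y_{n+1}$ are found in $M$). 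Put $t^*=\{\delta\}\cup\bigcup_n t_n$; since $\delta\notin A$, each $t_n\cap A=\emptyset$, and $\delta=\sup(\bigcup_n t_n)$, the set $t^*$ is a closed bounded subset of $\gamma$ disjoint from $A$, so $t^*\in\bb{T}_A$ and $t^*\leq_{\bb{T}_A}t_n$ for every $n$ (hence $t^*\leq_{\bb{T}_A}t$). Then $t^*$ forces $\dot{\mathcal{C}}$ to be closed under increasing $\omega$-unions and forces each $\check y_n\in\dot{\mathcal{C}}$, hence it forces $M\cap X=\bigcup_n y_n$ into $\dot{\mathcal{C}}$; but $M\cap X\in S$, contradicting the facts that $t$ forces $\dot{\mathcal{C}}$ disjoint from $\check S$ and that $t^*\leq_{\bb{T}_A}t$.

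I expect the last implication to be the main obstacle, precisely at the verification that $t^*$ is a genuine condition of $\bb{T}_A$: this is exactly the place where the hypothesis needs to be phrased in terms of $S\setminus\lift(A,[X]^\omega)$ (rather than, say, some ordinal set), since what we need is a stationary supply of countable $M$ with $\sup(M\cap\gamma)\notin A$. The remaining point requiring care is the auxiliary fact that $\lift(D,[X]^\omega)$ contains a club, which powers $\ref{item:SomebodyPreservesStationarityOfS}\Rightarrow\ref{item:SminusAisStationary}$; countable distributivity of $\bb{T}_A$ enters only there, to guarantee that clubs of $[X]^\omega$ are not destroyed by the forcing.
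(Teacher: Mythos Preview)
Your proposal is correct and follows essentially the same route as the paper's proof: the same cycle of implications, the same elementary-submodel argument for \ref{item:SminusAisStationary}$\Rightarrow$\ref{item:StationarityOfSisPreserved} (the paper phrases the construction as choosing an $N$-generic filter $g\subseteq\bb{T}_A\cap N$ rather than building the descending chain by hand, but this is the same argument), and the same use of the generic club $D$ together with a ground-model club witness for \ref{item:SomebodyPreservesStationarityOfS}$\Rightarrow$\ref{item:SminusAisStationary}. Your explicit verification that $\lift(D,[X]^\omega)$ contains a club, and your remark that countable distributivity is used only in that implication, make explicit points the paper leaves implicit.
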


\begin{proof}
\ref{item:SminusAisStationary}$\implies$\ref{item:StationarityOfSisPreserved}:
Suppose that \ref{item:SminusAisStationary} holds, and assume towards a contradiction
that \ref{item:StationarityOfSisPreserved} fails.
Let $\tau$ be a $\bb{T}_A$-name that is forced by some condition $p\in\bb{T}_A$
to be a club subset of $[X]^\omega$ disjoint from $S$. By
\ref{item:SminusAisStationary}, $S\setminus\lift(A,[X]^\omega)$ is stationary.
Let $\eta$ be a regular cardinal large enough that $H_\eta$ contains all the relevant parameters.
By Fact \ref{fact:ProjectionAndLifting}, $(S\setminus\lift(A,[X]^\omega))
\uparrow[H_\eta]^\omega$ is stationary, so we can pick a countable elementary submodel $N \prec \langle
H_\eta, \in, p, A, S, \tau \rangle$ such that $N\cap X\in S\setminus\lift(A,[X]^\omega)$. Thus,
$p,A,S,\tau\in N$, $N\cap X\in S$ and $\nu=\sup(N\cap\gamma)\notin A$. Let
$g\subseteq\bb{T}_A\cap N$ be $N$-generic for $\bb{T}_A$, with $p\in g$. Since
$\nu=\sup(\bigcup g)=\sup(N\cap\gamma)\notin A$, it follows that
$q=(\bigcup g)\cup\{\nu\}\in\bb{T}_A$. Also, there is a $\subseteq$-increasing
sequence $\langle x_n \mid n < \omega \rangle$ of elements of $\tau^g$
such that $\bigcup_{n < \omega} x_n
= N \cap X$. But if $q\in G$, where $G$ is $\bb{T}_A$-generic over $V$, then $\tau^g\subseteq\tau^G$, and so $N\cap X\in\tau^G\cap S$, a contradiction.

\ref{item:StationarityOfSisPreserved}$\implies$\ref{item:SomebodyPreservesStationarityOfS}: trivial.

\ref{item:SomebodyPreservesStationarityOfS}$\implies$\ref{item:SminusAisStationary}:
Suppose that \ref{item:SomebodyPreservesStationarityOfS} holds, and assume towards
a contradiction that \ref{item:SminusAisStationary} fails. The failure of
\ref{item:SminusAisStationary} means that there is a club $C\subseteq[X]^\omega$
disjoint from $S\setminus\lift(A,[X]^\omega)$, i.e.,
\begin{enumerate}
\item[$(*)$] $S\cap C\subseteq\lift(A,[X]^\omega)$.
\end{enumerate}
Using \ref{item:SomebodyPreservesStationarityOfS}, let $G\subseteq\bb{T}_A$ be
generic so that $S$ is stationary in $V[G]$. Hence, $S\cap C$ is stationary in $V[G]$.

Let $D=\bigcup G$, so $D\subseteq\gamma$ is club and $D\cap A=\emptyset$. Since $\bb{T}_A$ preserves the fact that $\gamma$ has uncountable cofinality, we can pick $x\in S\cap C$ with $\sup(x\cap\gamma)\in D$. But then $\sup(x\cap\gamma)\notin A$, which contradicts $(*)$.
\end{proof}

It follows from \cite[Theorem 4.6]{Larson:SeparatingSRP} that, assuming the consistency of Martin's Maximum, {\sf SRP} does not imply $\OSR(S^{\omega_2}_\omega)$, which by Lemma \ref{equivalence_lemma} is equivalent to each of $\DSR(\omega_1,S^{\omega_2}_\omega)$,
$\sDSR(\omega_1,S^{\omega_2}_\omega)$, and $\uDSR(\omega_1,S^{\omega_2}_\omega)$.
We now show that one can do slightly better at regular cardinals $\lambda > \omega_2$ by adapting the proof of Theorem \ref{thm:SimultaneousReflDoesNotImplyUnbounded}
to prove that \textsf{SRP} does not imply $\uDSR(1, S^\lambda_\omega)$. The hypothesis of the following theorem follows from Martin's Maximum.

\begin{theorem}
\label{thm:SeparatingSRPfromuDSR}
Let $\lambda > \omega_2$ be a regular cardinal, and suppose that
{\sf SRP} holds and continues to hold in any forcing extension obtained by a $\lambda$-directed closed forcing notion. Then there is a $\lambda$-strategically closed forcing notion which produces forcing extensions in which
\begin{enumerate}
  \item {\sf SRP} continues to hold, but
  \item $\uDSR(1,S^\lambda_\omega)$ fails.
\end{enumerate}
\end{theorem}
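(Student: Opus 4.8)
The plan is to adapt the proof of Theorem~\ref{thm:SimultaneousReflDoesNotImplyUnbounded}, using the very same forcing. Let $S=S^\lambda_\omega$ and let $\bb{P}$ be the poset from that proof, whose conditions are functions $p\colon\gamma^p\times\gamma^p\to 2$ whose rows $S^p_\alpha\subseteq S\setminus(\alpha+1)$ are pairwise disjoint and such that, for every $\beta\in S^{\gamma^p+1}_{>\omega}$, the set $\{\alpha<\beta\mid S^p_\alpha\cap\beta\text{ is stationary in }\beta\}$ is bounded below $\beta$. By Claim~\ref{strat_closed_claim}, $\bb{P}$ is $\lambda$-strategically closed, and by Claim~\ref{stat_claim_i} together with the definition of $\bb{P}$, in the extension $V[G]$ the generic sequence $\langle S_\alpha\mid\alpha<\lambda\rangle$ consists of stationary subsets of $S$ and witnesses the failure of $\uDSR(1,S^\lambda_\omega)$. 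Thus $\bb{P}$ is a $\lambda$-strategically closed poset as required, and everything reduces to showing that {\sf SRP} holds in $V[G]$.

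So fix, in $V[G]$, a regular cardinal $\theta\geq\omega_2$ with $\lambda\in H_\theta$ and a projective stationary $S^*\subseteq[H_\theta^{V[G]}]^\omega$; the goal is to produce a continuous $\in$-chain of countable elementary submodels of $H_\theta^{V[G]}$ all lying in $S^*$. Following the three-model strategy from the start of Section~\ref{sec:Separation}, I would choose an appropriate $\beta<\lambda$ and force over $V[G]$ with $\bb{T}=\bb{T}_{S_{\geq\beta}}$, where $S_{\geq\beta}=\bigcup_{\beta\leq\gamma<\lambda}S_\gamma$, to reach an extension $V[G][H]$ in which (i)~{\sf SRP} still holds and (ii)~$S^*$ is still projective stationary. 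Point~(i) is automatic: by Claim~\ref{dense_closed_claim}, $\bb{P}*\dot{\bb{T}}_{S_{\geq\beta}}$ has a dense $\lambda$-directed closed subset of size $\lambda$, so $V[G][H]$ arises from $V$ by $\lambda$-directed closed forcing and {\sf SRP} holds there by hypothesis. Given~(ii), applying {\sf SRP} in $V[G][H]$ to $S^*\uparrow[H_{\theta'}^{V[G][H]}]^\omega$ — which is projective stationary by Fact~\ref{fact:ProjectionAndLifting}, for a regular $\theta'$ large enough to contain $H_\theta^{V[G]}$, after first intersecting with the club $\{N\mid H_\theta^{V[G]}\in N\}$ — yields a continuous $\in$-chain whose members, intersected with $H_\theta^{V[G]}$, are countable elementary submodels of $H_\theta^{V[G]}$ lying in $S^*$. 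Finally, as in the proof of Theorem~\ref{thm:SimultaneousReflDoesNotImplyUnbounded}, $\bb{T}_{S_{\geq\beta}}$ is $\lambda$-distributive over $V[G]$; since $\omega_1<\lambda$, this means no new $\omega_1$-sequences are added, so the reflecting chain already belongs to $V[G]$, which is what we want.

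The ZFC ingredient behind~(ii) is the generalized-stationary analogue of Claim~\ref{capturing_claim}, which holds in $V[G]$: for every stationary $R\subseteq[H_\theta]^\omega$ there is $\beta<\lambda$ with $R\setminus\lift(S_{\geq\beta},[H_\theta]^\omega)$ stationary, where $\lift$ is taken relative to $\lambda$. The proof copies that of Claim~\ref{capturing_claim}: were this to fail, fix clubs $C_\beta\subseteq[H_\theta]^\omega$ with $R\cap C_\beta\subseteq\lift(S_{\geq\beta},[H_\theta]^\omega)$ for all $\beta<\lambda$, and pick a countable $N\prec\langle H_{\theta^+},\in\rangle$ with $R,\langle C_\beta\mid\beta<\lambda\rangle,\langle S_\gamma\mid\gamma<\lambda\rangle\in N$ and $N\cap H_\theta\in R$; then $\sup(N\cap\lambda)\in S_{\geq\beta}$ for every $\beta\in N\cap\lambda$, so $\sup(N\cap\lambda)\in\bigcap_{\beta\in N\cap\lambda}S_{\geq\beta}=S_{\geq\sup(N\cap\lambda)}$ by pairwise disjointness of the $S_\gamma$, contradicting $S_\gamma\cap(\gamma+1)=\emptyset$. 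Applying this to $R=S^*\cap\lift(B,[H_\theta]^\omega)$ for a stationary $B\subseteq\omega_1$, and feeding the result into Lemma~\ref{lem:WhenT_APreservesGenStationarity} (taking $\beta\geq 1$ so that $S^\lambda_\omega\setminus S_{\geq\beta}\supseteq S_0$ is stationary), shows that for a suitable $\beta=\beta(B)$, forcing with $\bb{T}_{S_{\geq\beta}}$ preserves the stationarity of $S^*\cap\lift(B,[H_\theta]^\omega)$.

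The main obstacle, and the place where this goes beyond the proof of Theorem~\ref{thm:SimultaneousReflDoesNotImplyUnbounded}, is that~(ii) demands a \emph{single} $\beta$ that simultaneously preserves the stationarity of $S^*\cap\lift(B,[H_\theta]^\omega)$ for \emph{every} stationary $B\subseteq\omega_1$: a projective stationary set constrains potentially more than $\lambda$-many stationary subsets of $\omega_1$, so one cannot simply take the supremum of the $\beta(B)$'s. The idea for getting around this is to exploit that $\cf(\lambda)>\omega_1$: fixing a club $C\subseteq[H_\theta]^\omega$, the sets $\omega_1\setminus\{\sup(x\cap\omega_1)\mid x\in S^*\cap C,\ \sup(x\cap\lambda)\notin S_{\geq\beta}\}$ are ${\subseteq}$-decreasing in $\beta$, hence eventually constant, with eventual value $\omega_1\setminus\{\sup(x\cap\omega_1)\mid x\in S^*\cap C\}$, which is nonstationary since $S^*$ is projective stationary. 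Promoting this ``per-club'' stabilization to a uniform $\beta$ is the delicate step; it can be attempted either directly, via elementary submodels (where one must grapple with the fact that $\sup(N\cap\omega_1)\notin N$), or more flexibly by replacing $\bb{T}_{S_{\geq\beta}}$ by the club-adding poset used in the proof of Theorem~\ref{thm:sDSRDoesNotImplyDSR}, whose projection maps $\pi_{t,\varepsilon}$ (Claim~\ref{claim_323}) allow a condition to be ``cut off'' below an ordinal $\varepsilon$ tailored to $S^*$ while preserving the statement ``$S^*$ is projective stationary'', which goes down to inner models with the same $\omega_1$. Once a suitable $\beta$ (or condition) has been produced, $S^*$ stays projective stationary in $V[G][H]$ — no new subsets of $\omega_1$ appear, so its stationary subsets of $\omega_1$ are unchanged — and the proof concludes as in the second paragraph.
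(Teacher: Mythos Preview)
Your overall architecture is exactly that of the paper: the same poset $\bb{P}$, the same auxiliary club-shooting forcings $\bb{T}_\beta$, the generalized-stationary analogue of Claim~\ref{capturing_claim} (which is the paper's Claim~\ref{claim:PreservingStationarity}), and the same transfer of the reflecting chain from $V[G*H]$ back to $V[G]$ via $\lambda$-distributivity. All of this is correct.

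The gap is in what you call ``the main obstacle.'' You write that one ``cannot simply take the supremum of the $\beta(B)$'s'' because there might be more than $\lambda$ many stationary $B\subseteq\omega_1$, and you then sketch two rather vague workarounds (a stabilization-of-clubs argument and a projection trick borrowed from Theorem~\ref{thm:sDSRDoesNotImplyDSR}), neither of which you actually carry out. But the obstacle is illusory: {\sf SRP} implies $2^{\omega_1}=\omega_2$ (a result of \Todorcevic), and since $\bb{P}$ is $\lambda$-distributive it adds no new subsets of $\omega_1$, so $2^{\omega_1}=\omega_2<\lambda$ holds in $V[G]$ as well. Hence there are fewer than $\lambda$ many stationary $B\subseteq\omega_1$, and $\beta=\sup_B\beta(B)<\lambda$ works. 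This is precisely how the paper dispatches the issue in Claim~\ref{claim:PreservingProjectiveStationarity}, and it is the reason the hypothesis $\lambda>\omega_2$ appears in the statement. Your proposed alternatives are not needed, and as written they are not complete arguments; in particular, the ``per-club stabilization'' idea does not obviously produce a single $\beta$, and the projection argument would require checking that projective stationarity is downward absolute in the relevant sense, which you assert but do not verify.

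A minor additional point: you restrict at the outset to $\theta$ with $\lambda\in H_\theta$, but {\sf SRP} must be verified for all regular $\theta\ge\omega_2$; the paper handles the case $\theta<\lambda$ by lifting to a larger $\theta'$ via Fact~\ref{fact:ProjectionAndLifting}.
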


\begin{proof}
%If $\lambda=\omega_2$, then the proof of \cite[Theorem 4.6]{Larson:SeparatingSRP} goes through.
%(The proof in \cite{Larson:SeparatingSRP} assumes \textsf{MM} in the ground model.
%The consequences of \textsf{MM} that are needed are \textsf{SRP} and the fact that
%the nonstationary ideal on $\omega_1$ is $\omega_2$-saturated. But, by a result of
%\Todorcevic, this saturation in fact follows from \textsf{SRP}; cf.\
%\cite[Lemma 9.76]{Woodin:ADforcingAxiomsNonstationaryIdeal}. Therefore, our hypothesis
%of an indestructible version of \textsf{SRP} is sufficient.)
%So let us assume that $\lambda>\omega_2$.
We use the forcing notion $\bb{P}$ of Theorem \ref{thm:SimultaneousReflDoesNotImplyUnbounded}. Let $G$ be $\bb{P}$-generic over $V$. It was shown in the proof of that theorem that $\bb{P}$ is $\lambda$-strategically closed and adds a sequence $\langle S_\alpha\st\alpha<\lambda\rangle$ of stationary subsets of $S^\lambda_\omega$ which is a counterexample to $\uDSR(1,S^\lambda_\omega)$. Recall that, for $\alpha<\lambda$, we set $S_{{\ge}\alpha}=\bigcup_{\alpha\le\beta<\lambda}S_\beta$ and let
$\dot{\bb{T}}_\alpha$ be a canonical name for the forcing $\bb{T}_{S_{{\ge}\alpha}}$. We have seen that $\bb{P}*\dot{\bb{T}}_\alpha$ has a dense $\lambda$-directed closed subset. Other crucial properties of the sequence $\vec{S}$, which we will use in the proof of
the following claim, are that for $\alpha<\lambda$, $S_\alpha\cap(\alpha+1)=\emptyset$ and that for $\alpha<\beta<\lambda$, $S_\alpha\cap S_\beta=\emptyset$. Work now in $V[G]$.

\begin{claim}
\label{claim:PreservingStationarity}
Let $\theta\ge\lambda$ be a regular cardinal, and let $S\subseteq[H_\theta]^\omega$ be stationary. Then there is an $\alpha<\lambda$ such that $\bb{T}_\alpha$ preserves the stationarity of $S$.
\end{claim}

\begin{proof}
If not, then, by Lemma \ref{lem:WhenT_APreservesGenStationarity}, for every
$\alpha<\lambda$, the set
\[
  B_\alpha=\{x\in S\st\sup(x\cap\lambda)\notin S_{{\ge}\alpha}\}
\]
is not stationary. Thus, for all $\alpha < \lambda$, let $C_\alpha\subseteq[H_\theta]^\omega$ be a club disjoint
from $B_\alpha$. By normality of the generalized club filter, there is a club
$D\subseteq[H_\theta]^\omega$ with the property that
\[\forall x\in D\forall\xi\in x\cap\lambda\quad x\in C_\xi.\]
Now let $x\in S\cap D$, and set $\sigma=\sup(x\cap\lambda)$. For all $\xi\in x\cap\lambda$, $x\in C_\xi$. So since $x\in S$ and $C_\xi\cap B_\xi=\emptyset$, it follows that $\sigma\in S_{{\ge}\xi}$. So, since $x\cap\lambda$ is cofinal in $\sigma$ and $\vec{S}$ is pairwise disjoint, it follows that $\sigma\in S_{{\ge}\sigma}$. But for $\zeta<\lambda$ with $\zeta\ge\sigma$, we have $S_\zeta\cap(\sigma+1)\subseteq S_\zeta\cap(\zeta+1)=\emptyset$, so it cannot be that $\sigma\in S_{{\ge}\sigma}$.
\end{proof}

Next, we are going to show that {\sf SRP} holds in $V[G]$. So, still working in $V[G]$, let $S\subseteq[H_\theta]^\omega$ be projective stationary, where $\theta\ge\omega_2$ is a regular cardinal.

\begin{claim}
\label{claim:PreservingProjectiveStationarity}
There is an $\alpha<\lambda$ such that $\bb{T}_\alpha$ preserves the projective stationarity of $S$.
\end{claim}

\begin{proof}
First, note that we may assume that $\theta\ge\lambda$. For otherwise, we can choose $\theta'\ge\lambda>\theta$ and set $S'=S\uparrow[H_{\theta'}]^\omega$. By Fact \ref{fact:ProjectionAndLifting}, $S'$ is projective stationary. If we can show that the projective stationarity of $S'$ is preserved by some $\bb{T}_\alpha$, then by applying Fact \ref{fact:ProjectionAndLifting} to $S=S'\downarrow[H_\theta]^\omega$, it follows that $S$ is projective stationary in the extension.

So let us assume that $\theta\ge\lambda$. For every stationary set $A\subseteq\omega_1$, let
\[S_A=S\cap\lift(A,[H_\theta]^\omega).\]
Using Claim \ref{claim:PreservingStationarity}, let $\alpha_A<\lambda$ be such that $\bb{T}_{\alpha_A}$ preserves the stationarity of $S_A$. Let \[\alpha=\sup\{\alpha_A\st A\ \text{is a stationary subset of}\ \omega_1\}.\]
Recall that {\sf SRP} implies that $2^{\omega_1}=\omega_2$ (this is due to \Todorcevic;
cf.\ \cite[Theorem 9.82]{Woodin:ADforcingAxiomsNonstationaryIdeal}). Since {\sf SRP} holds
in $V$ and $\bb{P}$ is $\lambda$-strategically closed, this consequence still holds in
$V[G]$. So we have that in $V[G]$, $2^{\omega_1}<\lambda$, which implies that $\alpha<\lambda$.

It now follows from Lemma \ref{lem:WhenT_APreservesGenStationarity} that $\bb{T}_\alpha$
preserves the stationarity of $S_A$ for every stationary $A \subseteq \omega_1$ in
$V[G]$. This is because by that lemma, it suffices to show that $S_A\setminus\lift(S_{{\ge}\alpha},[H_\theta]^\omega)$ is stationary. But we know that $\bb{T}_{\alpha_A}$ preserves the stationarity of $S_A$, which again by the lemma means that $S_A\setminus\lift(S_{{\ge}\alpha_A},[H_\theta]^\omega)$ is stationary. Since $\alpha_A\le\alpha$, we have that $S_{\alpha}\subseteq S_{\alpha_A}$, and so $S_A\setminus\lift(S_{{\ge}\alpha_A},[H_\theta]^\omega)\subseteq
S_A\setminus\lift(S_{{\ge}\alpha},[H_\theta]^\omega)$ is stationary, as wished.

Since, in $V$, $\bb{P} * \dot{\bb{T}}_\alpha$ has a dense $\lambda$-directed closed
subset, we know that, in $V[G]$, $\bb{T}_\alpha$ is $\lambda$-distributive and hence
does not add any new subsets of $\omega_1$. In particular, every stationary subset of $\omega_1$
in the extension by $\bb{T}_\alpha$ is already in $V[G]$. Therefore, forcing
with $\bb{T}_\alpha$ preserves the projective stationarity of $S$.
\end{proof}

We can now finish the proof by showing that, in $V[G]$, $S$ contains a continuous
$\in$-chain of length $\omega_1$. Let $\alpha<\lambda$ be such that $\bb{T}_\alpha$
preserves the projective stationarity of $S$, and let $H$ be $\bb{T}_\alpha$-generic
over $V[G]$. In $V[G * H]$, $S$ is projective stationary. Working in $V[G * H]$,
let $\theta'\ge\theta$ be a regular cardinal. We can form $S'=S\uparrow
([H_{\theta'}]^\omega)^{V[G * H]}$. $S'$ is then projective stationary in $V[G * H]$
by Fact \ref{fact:ProjectionAndLifting}; note that $\theta \geq \omega_2$ in $V[G * H]$.
Moreover, by the same fact, $S'\cap C$ is also projective stationary, whenever
$C\subseteq[H_{\theta'}]^\omega$ is a club. In particular, the set
\[
  T=\{x\in S'\st x\prec\langle H_{\theta'}^{V[G * H]},\in,H_\theta^{V[G]}\rangle\}
\]
is projective stationary in $V[G * H]$.

Since $\bb{P}*\dot{\bb{T}}$ is equivalent to a $\lambda$-directed closed forcing
in $V$, it follows by our assumptions that {\sf SRP} holds in $V[G * H]$. Hence,
there is an $\omega_1$-chain $\langle N_i\st i<\omega_1\rangle$ of elementary
submodels of $H_\theta^{V[G * H]}$ in $V[G * H]$ such that, for every $i<\omega_1$,
$N_i\in T$. Set $M_i=N_i\cap H_\theta^{V[G]}$, for $i<\omega_1$. Then, since
$\bb{T}_\alpha$ is $\lambda$-distributive in $V[G]$, it
follows that the sequence $\langle M_i\st i<\omega_1\rangle$ is in $V[G]$.
Moreover, it is a continuous $\in$-chain and, for every $i<\omega_1$, $M_i$
is an elementary submodel of $H_\theta^{V[G]}$
(since $H_\theta^{V[G]}\cap N_i$ is available as a predicate in $N_i$) and $M_i\in S$. This proves this instance of
{\sf SRP}.
\end{proof}

In order to use the argument of the previous proof to obtain the failure of $\uDSR(1,S^{\omega_2}_\omega)$ while ${\sf SRP}$ holds, we seem to need a stronger assumption. In fact, we do not know whether its consistency follows from any large cardinal assumption. Larson \cite[Remark before Def.~6.5]{Larson:SeparatingSRP} points out that it follows from results of Woodin \cite{Woodin:ADforcingAxiomsNonstationaryIdeal} that one can derive a model in which $\mathsf{SRP}(\omega_2)$ holds and the nonstationary ideal on $\omega_1$ has density $\omega_1$ from a model of $\mathsf{AD}_{\mathbb{R}} + $ ``$\theta$ is regular''. This is at least going in the direction of our assumption.

\begin{corollary}
\label{cor:SeparatingSRPfromuDSR}
Suppose that
{\sf SRP} holds and continues to hold in any forcing extension obtained by an $\omega_2$-directed closed forcing notion. Assume furthermore that the density of the nonstationary ideal on $\omega_1$ is $\omega_1$. Then
there is an $\omega_2$-strategically closed forcing notion which produces forcing extensions where
\begin{enumerate}
  \item {\sf SRP} continues to hold, but
  \item $\uDSR(1,S^{\omega_2}_\omega)$ fails.
\end{enumerate}
\end{corollary}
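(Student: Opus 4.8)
The plan is to re-run the proof of Theorem~\ref{thm:SeparatingSRPfromuDSR} with $\lambda = \omega_2$, changing essentially only the one step where the hypothesis $\lambda > \omega_2$ was genuinely used, namely the computation of the bound $\alpha < \lambda$ inside Claim~\ref{claim:PreservingProjectiveStationarity}; the density assumption on $\mathrm{NS}_{\omega_1}$ is exactly what is needed to recover that bound. First I would let $\bb{P}$ be the forcing notion from the proofs of Theorems~\ref{thm:SimultaneousReflDoesNotImplyUnbounded} and~\ref{thm:SeparatingSRPfromuDSR}, with $S = S^{\omega_2}_\omega$; note that $\bb{P}$ does not involve the auxiliary parameter $\kappa$, and that since {\sf SRP} holds in $V$ it implies $2^{\omega_1} = \omega_2$, so $\lambda^{{<}\lambda} = \lambda$ holds at $\lambda = \omega_2$ and the whole analysis of $\bb{P}$ applies: $\bb{P}$ is $\omega_2$-strategically closed, a $\bb{P}$-generic $G$ adds a sequence $\langle S_\alpha \st \alpha < \omega_2\rangle$ witnessing the failure of $\uDSR(1,S^{\omega_2}_\omega)$, and for each $\alpha < \omega_2$ the iteration $\bb{P} * \dot{\bb{T}}_\alpha$ (with $\bb{T}_\alpha = \bb{T}_{S_{{\ge}\alpha}}$) has a dense $\omega_2$-directed closed subset of size $\omega_2$, so $\bb{T}_\alpha$ is $\omega_2$-distributive in $V[G]$. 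Since $\bb{P}$ adds no new subsets of $\omega_1$, the nonstationary ideal on $\omega_1$, and hence its density, is unchanged in $V[G]$; so in $V[G]$ we still have a family $\{D_\xi \st \xi < \omega_1\}$ of stationary subsets of $\omega_1$ that is dense in $\mathcal{P}(\omega_1)/\mathrm{NS}_{\omega_1}$, and $2^{\omega_1} = \omega_2$ still holds. As in Theorem~\ref{thm:SeparatingSRPfromuDSR}, it remains to verify {\sf SRP} in $V[G]$.

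Working in $V[G]$, I would fix a regular $\theta \ge \omega_2$ and a projective stationary $S \subseteq [H_\theta]^\omega$; here $\theta \ge \omega_2 = \lambda$ automatically, so the preliminary lifting of $\theta$ in the proof of Claim~\ref{claim:PreservingProjectiveStationarity} is unnecessary. Claim~\ref{claim:PreservingStationarity} goes through unchanged at $\lambda = \omega_2$, since its proof uses only the structural properties of $\vec S$ and normality of the club filter on $[H_\theta]^\omega$. The replacement for Claim~\ref{claim:PreservingProjectiveStationarity} asserts that there is $\alpha < \omega_2$ for which $\bb{T}_\alpha$ preserves the projective stationarity of $S$. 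To obtain it, for each $\xi < \omega_1$ apply Claim~\ref{claim:PreservingStationarity} to the stationary set $S_{D_\xi} := S \cap \lift(D_\xi, [H_\theta]^\omega)$, getting $\alpha_\xi < \omega_2$ such that $\bb{T}_{\alpha_\xi}$ preserves its stationarity, and set $\alpha = \sup\{\alpha_\xi \st \xi < \omega_1\}$, which is below $\omega_2$ because $\omega_2$ is regular and we are taking a supremum over only $\omega_1$ ordinals. Since $\alpha \ge \alpha_\xi$ gives $S_{{\ge}\alpha} \subseteq S_{{\ge}\alpha_\xi}$ and hence $\lift(S_{{\ge}\alpha},[H_\theta]^\omega) \subseteq \lift(S_{{\ge}\alpha_\xi},[H_\theta]^\omega)$, Lemma~\ref{lem:WhenT_APreservesGenStationarity} (applied first with $\alpha_\xi$ to see $S_{D_\xi}\setminus\lift(S_{{\ge}\alpha_\xi},[H_\theta]^\omega)$ is stationary, hence so is $S_{D_\xi}\setminus\lift(S_{{\ge}\alpha},[H_\theta]^\omega)$, then with $\alpha$) shows that $\bb{T}_\alpha$ also preserves the stationarity of $S_{D_\xi}$, for every $\xi < \omega_1$.

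It then remains to check that this single $\alpha$ handles all stationary $A \subseteq \omega_1$. Let $H$ be $\bb{T}_\alpha$-generic over $V[G]$; by $\omega_2$-distributivity of $\bb{T}_\alpha$ in $V[G]$, the subsets of $\omega_1$ -- and hence the predicate ``$B$ is nonstationary'' for $B \subseteq \omega_1$ -- are the same in $V[G]$ and $V[G*H]$. Given stationary $A \subseteq \omega_1$ in $V[G*H]$, choose $\xi < \omega_1$ with $D_\xi \setminus A$ nonstationary; then $S_{D_\xi}\setminus(S\cap\lift(A,[H_\theta]^\omega)) \subseteq \lift(D_\xi \setminus A, [H_\theta]^\omega)$, which is nonstationary in $V[G*H]$ since the lifting of a nonstationary subset of $\omega_1$ is nonstationary. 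As $\bb{T}_\alpha$ preserves the stationarity of $S_{D_\xi}$, it follows that $S \cap \lift(A, [H_\theta]^\omega)$ is stationary in $V[G*H]$; since $A$ was arbitrary, $S$ is projective stationary in $V[G*H]$. From here I would finish exactly as in Theorem~\ref{thm:SeparatingSRPfromuDSR}: $\bb{P}*\dot{\bb{T}}_\alpha$ is equivalent to an $\omega_2$-directed closed forcing, so {\sf SRP} holds in $V[G*H]$ by hypothesis; lifting $S$ to a larger $H_{\theta'}$, intersecting with the club of submodels correctly capturing $H_\theta^{V[G]}$, and applying {\sf SRP} there produces a continuous $\in$-chain of countable elementary submodels of $H_\theta^{V[G]}$ lying in $S$, and this chain is in $V[G]$ by $\omega_2$-distributivity of $\bb{T}_\alpha$. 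The only real obstacle, and the reason the extra hypothesis is needed, is the bound $\alpha < \omega_2$: without the density-$\omega_1$ assumption one would be forced to take a supremum over all $2^{\omega_1} = \omega_2$ stationary subsets of $\omega_1$, which may reach $\omega_2$; the hypothesis is precisely what reduces this to a supremum of $\omega_1$-many ordinals.
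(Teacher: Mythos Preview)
Your proposal is correct and follows essentially the same route as the paper: use the forcing $\bb{P}$ of Theorem~\ref{thm:SimultaneousReflDoesNotImplyUnbounded} at $\lambda=\omega_2$, and replace the supremum over all stationary $A\subseteq\omega_1$ in Claim~\ref{claim:PreservingProjectiveStationarity} by a supremum over an $\omega_1$-sized dense family, so that $\alpha<\omega_2$. The only cosmetic difference is that the paper reads ``density $\omega_1$'' as the existence of a family $\mathcal{A}$ that is dense for literal inclusion (so that for stationary $B$ one picks $A\in\mathcal{A}$ with $A\subseteq B$, whence $S_A\subseteq S_B$ directly), whereas you use the standard quotient-order interpretation ($D_\xi\setminus A$ nonstationary) and then argue that $S_{D_\xi}\setminus S_A\subseteq\lift(D_\xi\setminus A,[H_\theta]^\omega)$ is nonstationary; both verifications are straightforward and yield the same conclusion.
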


\begin{proof}
By assumption, we may fix an $\omega_1$-sized collection $\mathcal{A}$ of stationary subsets of $\omega_1$ which is dense in the stationary subsets of $\omega_1$, that is, for every stationary $B\subseteq\omega_1$, there is an $A\in\mathcal{A}$ such that $A\subseteq B$.

We now argue as in the proof of Theorem \ref{thm:SeparatingSRPfromuDSR}, with $\lambda=\omega_2$. So we let $G$ be generic for $\bb{P}$, adding a sequence $\langle S_\alpha \st \alpha<\lambda\rangle$ of stationary subsets of $S^{\omega_2}_\omega$ which forms a counterexample to $\uDSR(1,S^{\omega_2}_\omega)$. We define $S_{{\ge}\alpha}$ and $\dot{\bb{T}}_\alpha$ as before, for $\alpha<\omega_2$. As before, we see that in $V[G]$, for every stationary $S\subseteq [H_\theta]^\omega$, where $\theta\ge\omega_2$, there is an $\alpha<\omega_2$ such that $\bb{T}_\alpha$ preserves the stationarity of $S$. We can now follow the proof of Claim \ref{claim:PreservingProjectiveStationarity} to show that in $V[G]$, if $S$ is projective stationary in $[H_\theta]^\omega$, where $\theta\ge\omega_2$ is regular, then there is an $\alpha<\omega_2$ such that $\bb{T}_\alpha$ preserves the projective stationarity of $S$. To see this, let, for any stationary $B\subseteq\omega_1$, $S_B=S\cap\lift(B,[H_\theta]^\omega)$ and choose, an ordinal $\alpha_B<\omega_2$ such that $\bb{T}_{\alpha_A}$ preserves the stationarity of $S_A=S\cap\lift(A,[H_\theta]^\omega)$. Since the cardinality of $\mathcal{A}$ is $\omega_1$, we know that $\alpha=\sup\{\alpha_A\st A\in\mathcal{A}\}<\omega_2$. It then follows that $\bb{T}_\alpha$ preserves the stationarity of $S_B$, for every stationary $B\subseteq\omega_1$. Namely, given such a $B$, it suffices to show that $S_B\setminus\lift(S_{{\ge}\alpha},[H_\theta]^\omega)$ is stationary. But by density of $\mathcal{A}$, there is an $A\in\mathcal{A}$ with $A\subseteq B$, and we know that $S_A\setminus\lift(S_{{\ge}\alpha_A},[H_\theta]^\omega)$ is stationary. Clearly, $S_A\subseteq S_B$, and $S_{{\ge}\alpha}\subseteq S_{{\ge}\alpha_A}$, so that
$S_A\setminus\lift(S_{{\ge}\alpha_A},[H_\theta]^\omega)\subseteq S_B\setminus\lift(S_{{\ge}\alpha},[H_\theta]^\omega)$ is stationary.

Since $\bb{P*}\dot{\bb{T}}_\alpha$ has an $\omega_2$-directed closed dense subset, {\sf SRP} holds in $V^{\bb{P}*\dot{\bb{T}}_\alpha}$, where $S$ is projective stationary, and this goes down to $V[G]$ as before.
\end{proof}

\section{Indexed square and $\sDSR$} \label{sec:Square}

In this section, we will show that Theorem \ref{sdsr_square_incompatibility_theorem}
is sharp by constructing models in which both $\sDSR({<}\kappa, S)$ and
$\square(\lambda, \kappa)$ hold, where $\kappa < \lambda$ are infinite regular
cardinals and $S \subseteq \lambda$ is stationary. Notice that we have in fact
already done this for $S = S^\lambda_\omega$ in the proof of Theorem
\ref{DSR_simultaneous_thm} (and in fact we obtained $\DSR({<}\kappa, S^\lambda_\omega)$
there). Our reasons for including this section are twofold. Firstly, we can
significantly reduce the large cardinals necessary. The hypotheses of Theorem
\ref{DSR_simultaneous_thm} can be obtained by assuming, for instance, that
$\kappa$ is indestructibly generically supercompact, whereas
we can achieve the hypotheses of Theorem \ref{sdr_square_thm} by starting in
an inner model in which $\lambda$ is weakly compact (at least, if we want $\lambda$
to be either inaccessible or the successor of a regular cardinal in the final
model). Secondly, Theorem \ref{sdr_square_thm} is more general in the sense
that it applies to any stationary subset $S \subseteq \lambda$.

We will need the following strengthening of $\square(\lambda, \kappa)$, introduced in
\cite{lh_narrow_systems}.

\begin{definition}
\label{def:IndexedSquare}
  Suppose that $\kappa < \lambda$ are infinite regular cardinals. A
  $\square^{\mathrm{ind}}(\lambda, \kappa)$ sequence is a matrix $\vec{\mathcal{C}}
  = \langle C_{\alpha, i} \mid \alpha \in \lim(\lambda), ~ i(\alpha) \leq i < \kappa \rangle$
  satisfying the following conditions.
  \begin{enumerate}
    \item For all $\alpha \in \lim(\lambda)$, we have $i(\alpha) < \kappa$.
    \item For all $\alpha \in \lim(\lambda)$ and $i(\alpha) \leq i < \kappa$,
    $C_{\alpha, i}$ is a club in $\alpha$.
    \item For all $\alpha \in \lim(\lambda)$ and $i(\alpha) \leq i < j < \kappa$,
    we have $C_{\alpha, i} \subseteq C_{\alpha, j}$.
    \item For all limit $\alpha, \beta \in \lim(\lambda)$ and $i(\beta) \leq i < \kappa$,
    if $\alpha \in \lim(C_{\beta, i})$, then $i(\alpha) \leq i$ and
    $C_{\alpha, i} = C_{\beta, i} \cap \alpha$.
    \item For all $\alpha, \beta \in \lim(\lambda)$ with $\alpha < \beta$, there is $i$ with
    $i(\beta) \leq i < \kappa$ such that $\alpha \in \lim(C_{\beta, i})$.
    \item There is no club $D \subseteq \lambda$ such that, for every $\alpha
    \in \lim(D)$, there is $i$ with $i(\alpha) \leq i < \kappa$ such that
    $D \cap \alpha = C_{\alpha, i}$. (Such a club would be a \emph{thread}
    through $\vec{\mathcal{C}}$.)
  \end{enumerate}
  $\square^{\mathrm{ind}}(\lambda, \kappa)$ is the assertion that there is a
  $\square^{\mathrm{ind}}(\lambda, \kappa)$-sequence.
\end{definition}

It is clear from the definition that $\square^{\mathrm{ind}}(\lambda, \kappa)$
implies the existence of a full $\square(\lambda, \kappa)$-sequence (see Definition \ref{def:Full}. Namely, by condition (5) if we let $\mathcal{C}_\alpha=\{C_{\alpha,i}\st i(\alpha)\le i<\kappa\}$, for limit $\alpha<\lambda$, then $\vec{\mathcal{C}}$ is a $\square(\lambda,\kappa)$-sequence with the property that for all $\alpha, \beta \in \lim(\lambda)$ with $\alpha<\beta$, $\alpha$ is a limit point of some $C\in\mathcal{C}_\beta$. This is a much stronger property than fullness.

\begin{theorem}
  \label{sdr_square_thm}
  Suppose that $\kappa < \lambda$ are infinite regular cardinals, $\lambda^{<\lambda} = \lambda$,
  $S \subseteq \lambda$ is stationary, and $\DSR^*({<}\kappa, S)$ holds. Then there is a
  cofinality-preserving forcing extension in which $S$ remains stationary and
  both $\sDSR({<}\kappa, S)$ and $\square^{\mathrm{ind}}(\lambda, \kappa)$ hold.
\end{theorem}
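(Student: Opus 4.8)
The plan follows the template of the preceding sections. First force with a poset $\bb{S}$ that adds a $\square^{\mathrm{ind}}(\lambda,\kappa)$-sequence and is $\lambda$-strategically closed; the resulting model $V[G]$ will be the desired one. Then verify $\sDSR({<}\kappa,S)$ in $V[G]$ by forcing further with the canonical poset $\bb{R}$ that threads the generic square sequence, using $\DSR^*({<}\kappa,S)$ to conclude that $\DSR({<}\kappa,S)$ holds in $V[G*H]$, and transferring a witness for $\sDSR$ back down to $V[G]$.

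For $\bb{S}$, take the forcing from \cite[Lemma 3.14]{Hayut-LambieHanson:SimultaneousReflectionAndSquare} that was already used in the proof of Theorem \ref{DSR_simultaneous_thm}: it is $\lambda$-strategically closed, has size $\lambda$ (as $\lambda^{{<}\lambda}=\lambda$), and its generic object is a $\square^{\mathrm{ind}}(\lambda,\kappa)$-sequence $\vec{\calC}=\langle C_{\alpha,i}\mid\alpha\in\lim(\lambda),\ i(\alpha)\le i<\kappa\rangle$. Let $G$ be $\bb{S}$-generic over $V$; then $\square^{\mathrm{ind}}(\lambda,\kappa)$ holds in $V[G]$. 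Working in $V[G]$, let $\bb{R}=\bb{R}_{\vec{\calC}}$ be the standard poset that threads $\vec{\calC}$: a condition is a pair $(d,i)$ where $i<\kappa$ and $d$ is a closed, bounded subset of $\lambda$ such that $i(\alpha)\le i$ and $d\cap\alpha=C_{\alpha,i}$ for every limit $\alpha\in d$, ordered by end-extension of the first coordinate with the index held fixed, and let $\dot{\bb{R}}$ be a canonical $\bb{S}$-name for it. As in Claims \ref{dense_closed_claim} and \ref{claim_315}, in $V$ the iteration $\bb{S}*\dot{\bb{R}}$ has a dense $\lambda$-directed closed subset of size $\lambda$ -- consisting of conditions $(p,(\check d,i))$ for which $\gamma^p=\max(d)$ and $d$ is coherent, at index $i$, with the partial square coded by $p$ -- the point being that at a limit of a descending chain one obtains a lower bound by simultaneously extending the $\bb{S}$-part and declaring the new top column $C_{\delta,i}$ of the square to be the union of the thread-segments played so far. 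In particular $\bb{S}*\dot{\bb{R}}$ is $\lambda$-distributive and $\lambda^+$-cc, hence cofinality-preserving and stationary-subset-of-$\lambda$ preserving; since $V\subseteq V[G]\subseteq V[G*H]$, it follows that $V[G]$ is a cofinality-preserving extension of $V$ in which $S$ is still stationary, and likewise $S$ is stationary in $V[G*H]$. Moreover $\bb{R}$ itself is $\lambda$-distributive over $V[G]$, so $V[G]$ and $V[G*H]$ have the same subsets of each $\gamma<\lambda$.

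Since $\DSR^*({<}\kappa,S)$ holds in $V$ and $\bb{S}*\dot{\bb{R}}$ is (densely) $\lambda$-directed closed of size $\lambda$, the principle $\DSR({<}\kappa,S)$ holds in $V[G*H]$. It remains to transfer $\sDSR({<}\kappa,S)$ down to $V[G]$. So fix, in $V[G]$, a matrix $\langle T_{\alpha,i}\mid\alpha<\lambda,\ i<j_\alpha\rangle$ of stationary subsets of $S$ with $j_\alpha<\kappa$ for all $\alpha$. The step I expect to be the main obstacle is the following: $\bb{R}$ need \emph{not} preserve the stationarity of an arbitrary stationary subset of $S$ -- threading an indexed square sequence can destroy stationarity, essentially because the thread's index stabilizes at some value below $\kappa$ once the thread acquires a limit point -- so we must produce a condition $r^*\in\bb{R}$, a stationary set $\Sigma\subseteq S$, and an $\bb{R}$-name for $\Sigma$ such that $r^*$ forces $\Sigma$ to remain stationary and forces every $T_{\alpha,i}$ with $\alpha\in\Sigma$ and $i<j_\alpha$ to remain stationary. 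To obtain these one first thins out to a stationary set of $\alpha$'s on which $j_\alpha$ and, for each relevant $i$, a suitable ``location'' index below $\kappa$ for $T_{\alpha,i}$ are all constant; the existence of such location indices comes from clause (5) of Definition \ref{def:IndexedSquare} together with a pigeonhole argument using $\kappa<\lambda$, which shows that for any stationary $T\subseteq\lambda$ and any $\delta<\lambda$ there is $i<\kappa$ with $\{\zeta\in T\mid\delta\in\lim(C_{\zeta,i})\}$ unbounded in $\lambda$. One then takes $r^*$ so that the thread is forced to stabilize at a large enough such index and runs a density argument inside $\bb{R}$, repeatedly extending thread conditions so as to pick up new limit points of the thread lying inside $\Sigma$ and inside each of the (boundedly many, on the thinned set) relevant sets $T_{\alpha,i}$. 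This is the analogue here of the stationarity-preservation claims in the proofs of Theorems \ref{unbounded_stationary_thm} and \ref{stationary_simultaneous_thm}, and it is the place where the combinatorics of $\square^{\mathrm{ind}}(\lambda,\kappa)$ genuinely enter.

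Granting the preceding, choose $H$ with $r^*\in H$ and let $\Sigma$ be the realization of the name. In $V[G*H]$ form the augmented matrix $\langle T^*_{\alpha,i}\mid\alpha<\lambda,\ i\le j_\alpha\rangle$ of stationary subsets of $S$ by setting $T^*_{\alpha,j_\alpha}=\Sigma$ for every $\alpha$, $T^*_{\alpha,i}=T_{\alpha,i}$ for $\alpha\in\Sigma$ and $i<j_\alpha$, and $T^*_{\alpha,i}=S$ otherwise. Applying $\DSR({<}\kappa,S)$ in $V[G*H]$ yields $\gamma\in S^\lambda_{>\omega}$ and a club $F^*\subseteq\gamma$ such that $T^*_{\alpha,i}$ reflects at $\gamma$ for all $\alpha\in F^*$ and $i\le j_\alpha$. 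In particular $\Sigma$ reflects at $\gamma$, so $F=F^*\cap\Sigma$ is stationary in $\gamma$, and for every $\alpha\in F$ and $i<j_\alpha$ we have that $T_{\alpha,i}=T^*_{\alpha,i}$ reflects at $\gamma$. Since $F$, $\gamma$, and the statements ``$T_{\alpha,i}\cap\gamma$ is stationary in $\gamma$'' all concern subsets of $\gamma<\lambda$, they are absolute between $V[G*H]$ and $V[G]$; hence $F\in V[G]$ and $\gamma$, $F$ witness this instance of $\sDSR({<}\kappa,S)$ in $V[G]$. As the matrix was arbitrary, $\sDSR({<}\kappa,S)$ holds in $V[G]$, and together with the facts recorded above this completes the proof.
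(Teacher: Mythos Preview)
Your overall template is right---force to add an indexed square, then thread it to recover $\DSR$ and pull $\sDSR$ back down---but the paper's proof uses a crucial extra ingredient that your proposal omits, and the gap is exactly at the place you flag as ``the main obstacle.''

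The paper does not force merely with $\bb{S}$; it forces with a two-step iteration $\bb{S}*\dot{\bb{P}}$, where $\bb{P}$ is a length-$\lambda^+$ iteration (from \cite[\S3.2]{Hayut-LambieHanson:SimultaneousReflectionAndSquare}) specifically engineered so that in $V^{\bb{S}*\dot{\bb{P}}}$, \emph{every} stationary $T\subseteq\lambda$ has some $i<\kappa$ and some $t\in\bb{T}_i$ with $t\Vdash_{\bb{T}_i}``\check{T}\text{ is stationary}"$. This is precisely the property you need and do not have. In $V^{\bb{S}}$ alone there is no reason to expect that a given stationary $T\subseteq\lambda$ survives threading at \emph{any} index below \emph{any} condition: the threading forcing $\bb{T}_i$ adds a club whose limit points all satisfy $i(\cdot)\le i$, and a stationary set concentrating on ordinals with large $i(\cdot)$ can be killed by every $\bb{T}_i$. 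Your pigeonhole observation---that for stationary $T$ and fixed $\delta$ there is $i<\kappa$ with $\{\zeta\in T\mid\delta\in\lim(C_{\zeta,i})\}$ unbounded---is true but does not yield preservation of stationarity of $T$ under $\bb{T}_i$; it only says something about which ordinals see $\delta$ as a limit point at index $i$, not that thread conditions can be extended to acquire limit points inside $T$. The ``density argument'' you sketch would require, for an $M\prec H_\theta$ with $\delta=M\cap\lambda\in T$, that the union of an $M$-generic chain in $\bb{T}_i$ equal $C_{\delta,i}$; there is no reason this holds for arbitrary $\delta\in T$.

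In the paper, once the $\bb{P}$ iteration supplies that preservation property, the argument proceeds by assigning to each $\alpha$ an index $i_\alpha$ and a condition $t_\alpha\in\bb{T}_{i_\alpha}$ that simultaneously preserves all $T_{\alpha,j}$ (using that $j_\alpha<\kappa$ and the projections $\pi_{ij}$), dropping to a bounded stage $\eta<\lambda^+$ by the chain condition, and then proving a claim that for some $i<\kappa$ and $r\in\bb{T}_i$, the set $\dot R_i=\{\alpha\in S\mid i_\alpha\le i\text{ and }t_\alpha^i\in\dot J_i\}$ is forced stationary. This claim is proved by contradiction, intersecting putative disjoint clubs pulled back along the projections $\pi_{0i}$ and using that $S$ remains stationary after threading; it is not a pigeonhole or Fodor argument in $V[G]$. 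Your final paragraph, applying $\DSR$ to the augmented matrix and transferring $\sDSR$ down, matches the paper and is fine---but it rests on producing $\Sigma$ and $r^*$, and that step genuinely needs the $\bb{P}$ iteration.
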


\begin{remark}
  Note that, since $\square^{\mathrm{ind}}(\lambda, \kappa)$ implies the
  existence of a full $\square(\lambda, \kappa)$ sequence, we can conclude
  from Theorem \ref{thm:SimulReflectionImpliesNoFullSquare} that
  $\refl(\kappa, T)$ fails for every stationary $T \subseteq \lambda$ in the
  model obtained in Theorem \ref{sdr_square_thm}. Therefore, this gives
  an alternate proof of Theorem \ref{stationary_simultaneous_thm} in the cases
  in which $\kappa \geq \omega$.
\end{remark}

\begin{proof}[Proof of Theorem \ref{sdr_square_thm}]
  By the results of \cite[Section 3.2]{Hayut-LambieHanson:SimultaneousReflectionAndSquare},
  there is a two-step forcing iteration $\bb{S} * \dot{\bb{P}}$ with the following
  salient properties.
  \begin{itemize}
    \item $\bb{S}$ has cardinality $\lambda$ and, in $V^{\bb{S}}$, $\bb{P}$ is
    a forcing iteration of length $\lambda^+$, taken with supports of size
    less than $\lambda$, in which each iterand has cardinality $\lambda$.
    $\bb{P}$ therefore has the $\lambda^+$-cc in $V^{\bb{S}}$.
    For $\eta \leq \lambda^+$, let $\bb{P}_\eta$ denote the initial segment of
    length $\eta$ of this iteration.
    \item In $V^{\bb{S} * \dot{\bb{P}}}$, $\square^{\mathrm{ind}}(\lambda, \kappa)$
    holds, as witnessed by a sequence $\vec{\mathcal{C}} = \langle C_{\alpha, i}
    \mid \alpha < \lambda, ~ i(\alpha) \leq i < \kappa \rangle$ explicitly
    introduced by $\bb{S}$.
    \item In $V^{\bb{S}}$, for each $i < \kappa$, define a forcing poset
    $\bb{T}_i$ as follows. Conditions of $\bb{T}_i$ are all clubs $C_{\alpha, i}$
    (from the $\square^{\mathrm{ind}}(\lambda, \kappa)$-sequence isolated above)
    such that $i(\alpha) \leq i$. If $C_{\alpha, i}$ and $C_{\beta, i}$ are in
    $\bb{T}_i$, then $C_{\beta, i} \leq_{\bb{T}_i} C_{\alpha, i}$ if and only
    if $C_{\alpha, i} = C_{\beta, i} \cap \alpha$. ($\bb{T}_i$ is the forcing
    to add a thread through the $i^{\mathrm{th}}$ column of $\vec{\mathcal{C}}$.)
    Then the following hold.
    \begin{itemize}
      \item In $V$, for all $i < \kappa$ and all $\eta \leq \lambda^+$,
      $\bb{S} * \dot{\bb{P}}_\eta * \dot{\bb{T}}_i$ has a dense $\lambda$-directed
      closed subset. Moreover, if $\eta < \lambda^+$, then this subset has
      cardinality $\lambda$.
      \item In $V^{\bb{S} * \dot{\bb{P}}}$, for all $i < j < \kappa$, the map
      $C_{\alpha, i} \mapsto C_{\alpha, j}$ is a projection from $\bb{T}_i$ to
      $\bb{T}_j$. This projection will be denoted by $\pi_{ij}:\bb{T}_i
      \rightarrow \bb{T}_j$.
      \item In $V^{\bb{S} * \dot{\bb{P}}}$, if $T$ is a stationary subset of
      $\lambda$, then there are $i < \kappa$ and $t \in \bb{T}_i$ such that
      $t \Vdash_{\bb{T}_i} `` \check{T} \text{ is stationary}"$.
    \end{itemize}
  \end{itemize}

  Let $G * H$ be $\bb{S} * \dot{\bb{P}}$-generic over $V$. For $\eta \leq \lambda^+$,
  let $H_\eta$ be the $\dot{P}_\eta$-generic filter induced by $H$. $V[G * H]$
  is our desired model. Notice that since, in $V$, $\bb{S} * \dot{\bb{P}} *
  \dot{\bb{T}}_0$ has a dense $\lambda$-directed closed subset, it preserves all
  stationary subsets of $\lambda$, and hence $S$ remains stationary in
  $V[G * H]$. It remains to verify that $\sDSR({<}\kappa, S)$ holds
  in $V[G * H]$.

  To this end, work in $V[G * H]$ and suppose that $\langle S_{\alpha, j} \mid \alpha < \lambda, ~ j <
  j_\alpha \rangle$ is a matrix of stationary subsets of $S$, where
  $j_\alpha < \kappa$ for every $\alpha < \lambda$. We will find
  $\gamma \in S^\lambda_{>\omega}$ such that, for stationarily many $\alpha < \gamma$,
  for all $j < j_\alpha$, we have that $S_{\alpha, j} \cap \gamma$ is stationary in
  $\gamma$.

  By the final property of $\bb{S} * \dot{\bb{P}}$ listed above,
  for each $\alpha < \lambda$ and
  $j < j_\alpha$, we
  can find an $i_{\alpha, j} < \kappa$ and a condition $t_{\alpha, j} \in
  \bb{T}_{i_{\alpha, j}}$ such that $t_{\alpha, j} \Vdash_{\bb{T}_{i_{\alpha, j}}}
  ``\check{S}_{\alpha, j} \text{ is stationary}"$. Notice that, for each
  such $\alpha$ and $j$ and all $k$ with $i_{\alpha, j} < k < \kappa$,
  we also have $\pi_{i_{\alpha, j}k}(t_{\alpha, j}) \Vdash_{\bb{T}_k}
  ``\check{S}_{\alpha, j} \text{ is stationary}"$ by the arguments of Claim
  \ref{claim_323}.

  For each $\alpha < \lambda$ and $j < j_\alpha$, let $\beta_{\alpha, j}$ be
  such that $t_{\alpha, j} = C_{\beta_{\alpha, j}, i_{\alpha, j}}$. For
  each $\alpha < \lambda$, find a limit ordinal $\beta_\alpha$ with
  $\alpha < \beta_\alpha < \lambda$ such
  that $\beta_{\alpha, j} < \beta_\alpha$ for all $j < j_\alpha$. Since
  $j_\alpha < \kappa$, the definition of $\square^{\mathrm{ind}}(\lambda, \kappa)$
  implies that we can find an ordinal $i_\alpha$ with $i(\beta_\alpha) \leq
  i_\alpha < \kappa$ such that, for all $j < j_\alpha$, we have
  $\beta_{\alpha, j} \in \lim(C_{\beta_\alpha, i_\alpha})$ and $i_\alpha > i_{\alpha, j}$.
  Letting $t_\alpha = C_{\beta_\alpha, i_\alpha}$, it follows that, for every
  $j < j_\alpha$, we have $t_\alpha \leq_{\bb{T}_{i_\alpha}} \pi_{i_{\alpha, j}
  i_\alpha}(t_{\alpha, j})$, and hence $t_\alpha \Vdash_{\bb{T}_{i_\alpha}}
  ``\check{S}_{\alpha, j} \text{ is stationary}"$. For $i$ with $i_\alpha
  \leq i < \kappa$, let $t_\alpha^i = \pi_{i_\alpha i}(t_\alpha)$.

  By the chain condition of $\bb{P}$, we can find $\eta < \lambda^+$ such that
  \[
    \langle S_{\alpha, j} \mid \alpha < \lambda, ~ j < j_\alpha \rangle \in
    V[G * H_\eta].
  \]
  Work now in $V[G * H_\eta]$. Since stationarity is downward absolute,
  it is still the case in $V[G * H_\eta]$
  that, for all $\alpha < \lambda$, all $j < j_\alpha$, and all $i$ with
  $i_\alpha \leq i < \kappa$, we have $t^i_\alpha \Vdash_{\bb{T}_i} ``
  \check{S}_{\alpha, j} \text{ is stationary}"$.

  For each $i < \kappa$, let $\dot{J}_i$ be the $\bb{T}_i$-name for the generic filter,
  and let $\dot{R}_i$ be the $\bb{T}_i$-name for the set
  \[
    \{\alpha \in S \mid i_\alpha \leq i \text{ and } t^i_\alpha \in \dot{J}_i\}.
  \]

  \begin{claim}
    There is $i < \kappa$ and $r \in \bb{T}_i$ such that
    \[
      r \Vdash_{\bb{T}_i}``\dot{R}_i \text{ is stationary}".
    \]
  \end{claim}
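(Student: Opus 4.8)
The plan is to argue by contradiction. Suppose that for every $i<\kappa$ there is no $r\in\bb{T}_i$ forcing $\dot R_i$ to be stationary. Then for each $i$ the set of conditions forcing ``$\dot R_i$ is nonstationary'' is dense in $\bb{T}_i$ (the nonstationary ideal being definable and $\lambda$ remaining regular in each such extension), so by a standard argument $\Vdash_{\bb{T}_i}``\dot R_i$ is nonstationary$"$ for all $i<\kappa$. I would then derive a contradiction.

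The first step is to localize to a single column. For $i<\kappa$ set $A_i=\{\alpha\in S\st i_\alpha\le i\}$. Then $S=\bigcup_{i<\kappa}A_i$, and since $\kappa<\lambda$ and the nonstationary ideal on $\lambda$ is $\lambda$-complete, there is $i^*<\kappa$ for which $A_{i^*}$ is stationary in $V[G*H_\eta]$. Observe that for $\alpha\in A_{i^*}$ we have $t^{i^*}_\alpha=\pi_{i_\alpha i^*}(t_\alpha)=C_{\beta_\alpha,i^*}$, and that a condition $C_{\beta,i^*}$ lies in a $\bb{T}_{i^*}$-generic filter exactly when $\beta$ is a limit point of the associated generic thread (the union of the conditions in the filter); hence $\dot R_{i^*}$ is forced to equal $\{\alpha\in A_{i^*}\st\beta_\alpha\text{ is a limit point of the generic thread}\}$. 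So it suffices to refute $\Vdash_{\bb{T}_{i^*}}``\dot R_{i^*}$ is nonstationary$"$.

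For this I would pass to $V$ and work with the dense $\lambda$-directed closed subset $\bb{U}$ of $\bb{S}*\dot{\bb{P}}_\eta*\dot{\bb{T}}_{i^*}$ of size $\lambda$ provided by the construction (using also that it adds no bounded subsets of $\lambda$ and collapses no cardinals). Since the assumption $\Vdash_{\bb{T}_{i^*}}``\dot R_{i^*}$ is nonstationary$"$ would transfer to $\bb{U}$ forcing ``$\dot R_{i^*}$ is nonstationary'' outright, it is enough to exhibit a condition of $\bb{U}$ — lying below one that forces $A_{i^*}$ to be stationary — that forces ``$\dot R_{i^*}$ is stationary''. Given a $\bb{T}_{i^*}$-name $\dot E$ for a club in $\lambda$ and a further condition, I would build a fusion-style $\omega$-chain inside $\bb{U}$ (amalgamating at the limit by $\lambda$-directed closure) that simultaneously: (i) decides longer and longer initial segments of the matrix $\langle S_{\alpha,j}\rangle$, and hence of the derived data $i_\alpha,\beta_\alpha,t^{i^*}_\alpha$; (ii) forces $\dot E$ to climb above increasing ordinals; and (iii) routes the $\dot{\bb{T}}_{i^*}$-coordinate of the condition through the square-block $C_{\beta_\alpha,i^*}$ for an $\alpha\in A_{i^*}$ captured along the way. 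At stage $\omega$ this yields a lower bound whose $\dot{\bb{T}}_{i^*}$-coordinate extends $C_{\beta_\alpha,i^*}$ and which forces $\alpha$ to be a limit point, hence an element, of $\dot E$; since $i_\alpha\le i^*$ and $\alpha\in S$, it forces $\alpha\in\dot E\cap\dot R_{i^*}$, and as $\dot E$ was arbitrary this gives a condition of $\bb{U}$ forcing $\dot R_{i^*}$ stationary, a contradiction.

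The coherence clauses (3)--(5) of Definition \ref{def:IndexedSquare} are what make step (iii) legal: the current thread-segment $C_{\gamma,i^*}$ can be extended through $C_{\beta_\alpha,i^*}$ exactly when $C_{\beta_\alpha,i^*}$ end-extends it coherently, i.e.\ when $\gamma\in\lim(C_{\beta_\alpha,i^*})$. The main obstacle I expect is precisely ensuring that, at each stage, there is such an $\alpha\in A_{i^*}$ above the currently reached height whose block $C_{\beta_\alpha,i^*}$ sits above $C_{\gamma,i^*}$ in $\bb{T}_{i^*}$ — that is, that the stationary set $A_{i^*}$ can be ``threaded through'' a generic branch of column $i^*$; this is where one must exploit condition (5), the latitude in the earlier choice of the ordinals $\beta_\alpha$, and the stationarity of $A_{i^*}$, and it is the technical heart of the argument, alongside the bookkeeping needed to keep (i)--(iii) consistent with one another and to verify that the limit amalgamations remain conditions of $\bb{U}$.
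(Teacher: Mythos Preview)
Your plan diverges from the paper's proof at the very first move, and the obstacle you flag as the ``technical heart'' is precisely the point where your route becomes harder than it needs to be. You fix a single column $i^*$ (one for which $A_{i^*}=\{\alpha\in S\mid i_\alpha\le i^*\}$ is stationary) and then try to thread a generic branch of $\bb{T}_{i^*}$ through the conditions $C_{\beta_\alpha,i^*}$ for suitable $\alpha\in A_{i^*}$. The difficulty you correctly identify is that, at the limit of your fusion, you need $\alpha\in\lim(C_{\beta_\alpha,i^*})$ so that $C_{\beta_\alpha,i^*}$ extends the thread segment $C_{\alpha,i^*}$. Condition~(5) of Definition~\ref{def:IndexedSquare} only guarantees $\alpha\in\lim(C_{\beta_\alpha,i})$ for \emph{some} $i<\kappa$, and nothing in your setup forces this $i$ to be $\le i^*$. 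Your appeal to ``latitude in the choice of $\beta_\alpha$'' does not help directly, since altering $\beta_\alpha$ can push $i_\alpha$ up. The fix is to strengthen the earlier definition of $i_\alpha$: in addition to the requirements already imposed, also demand $\alpha\in\lim(C_{\beta_\alpha,i_\alpha})$ (possible by condition~(5)). Then $\alpha\in A_{i^*}$ automatically gives $\alpha\in\lim(C_{\beta_\alpha,i^*})$ via condition~(3), and the extension goes through. With this adjustment your plan is workable, though the fusion bookkeeping (deciding the matrix, landing the limit in $A_{i^*}$, handling the closure of $\bb{U}$ at the $\omega$-step) still needs care.

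The paper avoids all of this by not committing to a column in advance. Assuming for contradiction that every $\bb{T}_i$ forces $\dot R_i$ nonstationary, one picks for each $i$ a $\bb{T}_i$-name $\dot E_i$ for a club disjoint from $\dot R_i$, pulls all of these back to $\bb{T}_0$ via the projections $\pi_{0i}$, and lets $\dot E$ name $\bigcap_{i<\kappa}\dot E_i$. In the $\bb{T}_0$-extension one then finds (using preservation of the stationarity of $S$) a single $\delta\in S$ lying in a suitable club refinement $E^*$ of $E$, so that $C_{\delta,i}\Vdash_{\bb{T}_i}``\check\delta\in\dot E_i"$ for \emph{every} $i$. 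Only now does one choose $i^*\ge i_\delta$ with $\delta\in\lim(C_{\beta_\delta,i^*})$; for this particular $i^*$ one has $t^{i^*}_\delta\le C_{\delta,i^*}$ forcing $\delta\in\dot R_{i^*}\cap\dot E_{i^*}$, a contradiction. The point is that the right column depends on $\delta$, and working in $\bb{T}_0$ lets you defer that choice until after $\delta$ is found. This is shorter and sidesteps the alignment problem entirely.
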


  \begin{proof}
    Suppose not. Then, for each $i < \kappa$, there is a $\bb{T}_i$-name $\dot{E}_i$
    for a club in $\lambda$ disjoint from $\dot{R}_i$. Since, for each $i < \kappa$,
    $\pi_{0i}$ is a projection from $\bb{T}_0$ to $\bb{T}_i$, each $\dot{E}_i$
    can be interpreted as a $\bb{T}_0$-name, so we can let $\dot{E}$ be a
    $\bb{T}_0$-name for $\bigcap_{i < \kappa} \dot{E}_i$.

    Let $J$ be $\bb{T}_0$-generic over $V[G * H_\eta]$, let $D = \bigcup J$ be
    the generic club added by $J$ that threads $\vec{C}$, and let $E$ be the interpretation
    of $\dot{E}$ in $V[G * H_\eta * J]$. Notice that, since $S$ is a stationary
    subset of $\lambda$ in $V$ and $\bb{S} * \dot{\bb{P}}_\eta * \dot{\bb{T}}_0$
    has a dense $\lambda$-directed closed subset, $S$ remains stationary in
    $V[G * H_\eta * J]$.

    For each $\gamma \in E$, find an ordinal $\xi_\gamma$ with $\gamma \leq
    \xi_\gamma < \lambda$ such that
    $C_{\xi_\gamma, 0} \in J$ and, in $V[G * H_\eta]$, $C_{\xi_\gamma, 0}
    \Vdash_{\bb{T}_0} ``\check{\gamma} \in \dot{E}"$. Note that, by our definition
    of $\dot{E}$, it follows that, for all $i < \kappa$, we have
    $C_{\xi_\gamma, i} \Vdash_{\bb{T}_i} ``\check{\gamma} \in \dot{E}_i"$.
    Let
    \[
      E^* = \{\delta \in \lim(E) \mid \text{for all }\gamma \in E \cap \delta,
      \text{ we have } \xi_\gamma < \delta\}.
    \]
    Then $E^*$ is a club in $\lambda$ and $E^* \subseteq \lim(D)$. We can therefore
    find $\delta \in E^* \cap S$. Then $C_{\delta, 0} \in J$ and
    $C_{\delta,0} \leq_{\bb{T}_0} C_{\xi_\gamma, 0}$ for every $\gamma \in
    E \cap \delta$. Since $\sup(E \cap \delta) = \delta$ and, in
    $V[G * H_\eta]$, $\dot{E}$ is forced to be a club, we know that
    $C_{\delta, 0} \Vdash_{\bb{T}_0} ``\check{\delta} \in \dot{E}."$ in
    $V[G * H_\eta]$. By the definition of $\dot{E}$, it follows that, for all
    $i < \kappa$, we have $C_{\delta, i} \Vdash_{\bb{T}_i}``\check{\delta} \in
    \dot{E}_i"$.

    Recall that we previously found a limit ordinal $\beta_\delta$ with
    $\delta < \beta_\delta < \lambda$ and an ordinal $i_\delta < \kappa$ such
    that, for all $i$ with $i_\delta \leq i < \kappa$, we have $t^i_\delta =
    C_{\beta_\delta, i}$. Let $i^* < \kappa$ be least such that
    $i^* \geq i_\delta$ and $\delta \in \lim(C_{\beta_\delta}, i^*)$.
    Then $t^i_\delta \leq_{\bb{T}_i} C_{\delta, i}$ and, clearly,
    $t^i_\delta \Vdash_{\bb{T}_i} ``\check{\delta} \in \dot{R}_i"$. However,
    this contradicts the fact that $C_{\delta, i} \Vdash_{\bb{T}_i} ``\check{\delta}
    \in \dot{E}_i"$ and $\dot{E}_i$ is forced to be disjoint from $\dot{R}_i$.
  \end{proof}

  Choose $i < \kappa$ and $r \in \bb{T}_i$ as in the statement of the claim, and let
  $J$ be $\bb{T}_i$-generic over $V[G * H_\eta]$ with $r \in J$. Let $R$ be
  the interpretation of $\dot{R}_i$ in $V[G * H_\eta * J]$. Note that, for all
  $\alpha \in R_i$ and all $j < j_\alpha$, we know that $S_{\alpha, j}$ remains stationary
  in $V[G * H_\eta * J]$. Since, in $V$,
  $\bb{S} * \dot{\bb{P}}_\eta * \dot{\bb{T}}_i$ has a dense $\lambda$-directed
  closed subset of cardinality $\lambda$ and $\DSR^*({<}\kappa, S)$ holds, it
  follows that $\DSR({<}\kappa, S)$ holds in $V[G * H_\eta * J]$. Working
  in $V[G * H_\eta * J]$, define a matrix
  $\langle \hat{S}_{\alpha, j} \mid \alpha < \lambda, ~ j < j_\alpha + 1 \rangle$
  as follows. For all $\alpha \in R_i$, let $\hat{S}_{\alpha, j} = S_{\alpha, j}$
  for all $j < j_\alpha$ and $\hat{S}_{\alpha, j_\alpha} = R$. For
  $\alpha \in \lambda \setminus R_i$, simply let $\hat{S}_{\alpha, j} = R$
  for all $\alpha < j_\alpha + 1$.

  By $\DSR({<}\kappa, S)$, we can find an ordinal $\gamma \in S^\lambda_{>\omega}$
  and a club $F \subseteq \gamma$ such that $\hat{S}_{\alpha, j} \cap \gamma$
  is stationary in $\gamma$ for every $\alpha \in F$ and every $j < j_\alpha + 1$.
  Since $\hat{S}_{\alpha, j_\alpha} = R$ for every $\alpha < \lambda$, it follows
  that $F \cap R$ is stationary in $\gamma$ and, for all $\alpha \in F \cap R$
  and all $j < j_\alpha$, we have that $S_{\alpha, j} \cap \gamma$ is stationary
  in $\gamma$. Since stationarity is downward absolute, it follows that, in
  $V[G * H_\eta]$, the set of $\alpha < \gamma$ such that $S_{\alpha, j} \cap
  \gamma$ is stationary in $\gamma$ for every $j < j_\alpha$ is itself
  stationary in $\gamma$. Since $V[G * H]$ has the same bounded subsets of
  $\lambda$ as $V[G * H_\eta]$, this continues to hold in $V[G * H]$ as well.
  Therefore, $\gamma$ witnesses this instance of $\sDSR({<}\kappa, S)$ in
  $V[G * H]$.
\end{proof}

\section{Questions}
\label{sec:Questions}

We end with a few questions that remain open.
First, recall Lemma \ref{dsr_implies_simultaneous}, stating that
if $\lambda$ is a regular uncountable cardinal, $S\subseteq\lambda$ is stationary and $\DSR(1,S)$ holds, then $\refl(\omega,S)$ follows. On the other hand, by Theorem \ref{DSR_simultaneous_thm}, it is consistent to have regular cardinals $\omega_1<\kappa<\lambda$ such that $\DSR({<}\kappa,S^\lambda_\omega)$ holds yet $\refl(\kappa,S)$ fails, for any set $S$ stationary in $\lambda$. So the question that remains in this context is:

\begin{question}
Is it consistent that there is a regular cardinal $\lambda>\omega_1$ such that $\DSR(1,S^\lambda_\omega)$ holds but $\refl(\omega_1,\lambda)$ fails? Or that $\DSR(\omega,S^\lambda_\omega)$ holds but $\refl(\omega_1,\lambda)$ fails?
\end{question}

In another direction, recall Theorem \ref{sdsr_square_incompatibility_theorem}, which states that if $1<\kappa<\lambda$, $\lambda$ regular and $\sDSR({<}\kappa,S)$ holds, for some set $S$ stationary in $\lambda$, then $\square(\lambda,{<}\kappa)$ fails. This is an improvement of the original observation, which drew the same conclusion from the assumption of $\DSR({<}\kappa,S)$. We have shown that this is optimal in some sense (see Theorems \ref{DSR_simultaneous_thm} and \ref{sdr_square_thm}), but it is open whether it is optimal in another sense, namely, the following is unknown.

\begin{question}
Suppose $1<\kappa<\lambda$, $\lambda$ regular, and that $\uDSR({<}\kappa,S)$ holds, for some set $S$ stationary in $\lambda$. Does it follow that $\square(\lambda,{<}\kappa)$ fails?
\end{question}

Finally, we ask whether Theorem \ref{thm:sDSRDoesNotImplyDSR} can be improved
to cover the case in which $\lambda = \kappa^+$.

\begin{question}
Is it consistent that $\kappa$ is an uncountable cardinal, $\lambda = \kappa^+$,
and $\sDSR({<}\kappa,S^\lambda_\omega)$ holds but $\DSR(1,S^\lambda_\omega)$ fails?
\end{question}

\bibliographystyle{amsplain}
\bibliography{bib}

\end{document}